\documentclass[reqno]{amsart}

\usepackage[letterpaper, hmarginratio=1:1]{geometry}
\usepackage[hidelinks]{hyperref}                     
\usepackage{bbm,bm}
\usepackage{stmaryrd}
\usepackage{enumerate}
\usepackage{amssymb}
\usepackage{amsmath, amsthm}
\usepackage{cleveref}
\usepackage[mathscr]{euscript}
\usepackage{tikz}
\usetikzlibrary{calc}

\Crefname{appendix}{Appendix}{Appendices}

\usepackage[
backend=biber,
style=alphabetic,
giveninits=true,
url = false,
doi = false,
isbn =false,
maxbibnames=99,
maxalphanames=99,
]{biblatex}
\renewbibmacro{in:}{}
\DeclareFieldFormat{title}{#1} 
\DeclareFieldFormat[article]{title}{#1}
\DeclareFieldFormat[inbook]{title}{#1}
\DeclareFieldFormat[incollection]{title}{#1}
\AtBeginBibliography{\small}

\newtheorem{theorem}{Theorem}[section]

\newtheorem{lemma}[theorem]{Lemma}

\newtheorem{proposition}[theorem]{Proposition}

\theoremstyle{definition}
\newtheorem{definition}[theorem]{Definition}
\newtheorem{remark}[theorem]{Remark}

\newcommand\C{\mathbb{C}}

\newcommand\E{\mathbb{E}}

\newcommand\N{\mathbb{N}}

\renewcommand\P{\mathbb{P}}
\newcommand\R{\mathbb{R}}

\newcommand\bbH{\mathbb{H}}

\newcommand\bbT{\mathbb{T}}

\newcommand{\bbV}{\mathbb{V}}

\newcommand{\rank}{\mathrm{rank}}
\newcommand{\Law}{\mathrm{Law}}
\newcommand{\diag}{\mathrm{diag}}

\newcommand\be{\begin{equation}}
\newcommand\ee{\end{equation}}

\newcommand\Var{\operatorname{Var}}
\newcommand{\tr}{\operatorname{Tr}}
\newcommand{\eps}{\varepsilon}
\renewcommand{\epsilon}{\varepsilon}
\newcommand{\iu}{\mathrm{i}}

\renewcommand{\hat}{\widehat}
\renewcommand{\log}{\ln}

\renewcommand{\phi}{\varphi}

\def\beq{\begin{equation}}
\def\eeq{\end{equation}}
\newcommand{\dist}{\operatorname{dist}}

\numberwithin{equation}{section}
\renewcommand{\L}{\mathfrak L}

\newcommand{\bex}{\begin{equation*}}
\newcommand{\eex}{\end{equation*}}

\renewcommand{\Re}{\operatorname{Re}}
\renewcommand{\Im}{\operatorname{Im}}

\newcommand{\one}{\mathbbm{1}}

\title{Mobility Edge for the Anderson Model on Random Regular Graphs}
\author{Suhan Liu}
\author{Patrick Lopatto}
\date{\today}
\begin{document}
\begin{abstract}
We determine the phase diagram of the Anderson tight-binding model on random regular graphs with Gaussian disorder and sufficiently large degree. In particular, we prove that if the degree is fixed and the number of vertices goes to infinity, the spectrum asymptotically consists of a finite delocalized interval surrounded by two unbounded localized components. 
Our argument uses a recent description of the spectrum of the tight-binding model on the Bethe lattice (Aggarwal--Lopatto, 2025). By viewing the Bethe lattice as the local limit of a random regular graph, and establishing suitable concentration, eigenvalue-counting, and resolvent estimates, we transfer this characterization of the spectrum of the limiting model to the finite-volume setting. 
\end{abstract}
\maketitle

\setcounter{tocdepth}{1}
\tableofcontents

\section{Introduction}

\subsection{Background}
In a disordered medium, such as a doped semiconductor, the diffusion of waves ceases completely when the disorder is sufficiently strong. This principle, known as \emph{Anderson localization}, has played a fundamental role in the development of solid state physics \cite{abrahams201050}. Significant effort has been devoted to studying Anderson localization from first principles, and a detailed picture based on the methods of theoretical physics has emerged, augmented by a substantial body of numerical evidence. However, our mathematically rigorous understanding remains highly incomplete.

A standard setting for studying wave transport in disordered media is the tight-binding model, which served as the basis for Anderson's original investigations \cite{Anderson1958absence}. 
Let $\mathcal G$ be a graph (finite or infinite), let $A$ denote the adjacency operator of $\mathcal G$, and  let $V$ denote a diagonal operator of independent, identically distributed (i.i.d.) random variables. A particle whose time evolution is governed  by $A$ moves freely by nearest-neighbor hopping. In contrast, the time evolution induced by $V$ traps a particle at its initial site. 
The tight-binding Hamiltonian is defined as $H_t =  -t A  +V$; it represents a contest between the opposite influences of $A$ and $V$ mediated by $t$, the inverse disorder strength. When $t$ is sufficiently small,  $V$ dominates, and the particle remains localized near its initial position. However, when $t$ is sufficiently large, one expects that extended states may emerge, at least for certain energies and graph geometries, leading to a region (or regions) of delocalization in the spectrum separated from regions of localization by sharp transitions called \emph{mobility edges} \cite{aizenman2015random}.

An important special case is when $\mathcal G$ is the infinite $d$-regular tree, also known as the Bethe lattice. 
While simpler than more physically realistic graphs, such as $\mathbb{Z}^3$, its lack of cycles renders it more amenable to analysis  \cite{abou1973selfconsistent,abou1974self,mirlin1991localization,miller1994weak}. Because of this, it has served as a distinguished testbed for theoretical investigations. For instance, the self-consistent theory of Anderson localization was developed using equations which are approximate on a true lattice, but become exact on the Bethe lattice \cite{abou1973selfconsistent,abou1974self}. Much is now known rigorously about this model, including delocalization at small energies \cite{klein1994absolutely, klein1998extended,klein1996spreading,aizenman2006stability,froese2007absolutely}, localization at large energies \cite{aizenman1994localization,aizenman1993localization}, and a nearly complete description of the phase diagram at low disorder  (omitting only small intervals around the expected locations of the mobility edges) \cite{drogin2025regular}. Further, \cite{aizenman2013resonant} established a criterion for absolutely continuous spectrum based on the fractional moments of resolvent entries. This was used in \cite{bapst2014large}, along with the Simon--Wolff criterion for localization \cite{SCSRP,aizenman1993localization}, to provide quantitative bounds on the endpoints of the regions of localization and delocalization, mapping a large part of the spectrum. As the degree of the Bethe lattice tends to infinity, these bounds become increasingly better and characterize the entire spectrum in the limit (but not for any finite degree). Recently, building on the work of \cite{bapst2014large,SCSRP,aizenman1993localization,aizenman2013resonant,aggarwal2022mobility}, mobility edges separating regions of delocalization and localization were shown to exist in \cite{aggarwal2025mobility} when the degree $d$ is sufficiently large and the potential has unbounded support.

A close cousin of the Bethe lattice is the random regular graph, which is generally understood to be its correct finite-size analog. (Simply truncating the Bethe lattice to a finite tree leads to a graph where most vertices are leaves, which produce undesirable boundary effects.) Since random regular graphs converge locally to the Bethe lattice, their spectral and eigenfunction observables measure how Bethe-lattice physics is modified by long loops and finite-size effects, providing a controlled high-dimensional reference for comparison with finite-dimensional Anderson localization \cite{biroli2022critical,biroli2010anderson,tarquini2017critical}. Further, there has been a resurgence of interest in the tight-binding model on the random regular graph due to its connection with many-body localization, a phenomenon that has recently attracted significant interest in the physics community \cite{pascazio2023anderson}. Specifically, in suitable many-body problems, certain hierarchical models in Fock space can be approximately mapped to tight-binding models on locally tree-like graphs, for which a random regular graph is a useful proxy \cite{tikhonov2016anderson}.

There have been only a few rigorous results for the tight-binding model on random regular graphs. 
In \cite{geisinger2013convergence}, it was shown that for compactly supported and bounded disorder densities, the integrated density of states converges to the integrated density of states of the Bethe lattice, as $d$ is fixed and the number of vertices $N$ tends to infinity, with a rate of convergence of order $1/\log N$. This work also showed that if the eigenvectors in a finite energy window in the absolutely continuous spectrum for the Bethe lattice are averaged, weighted by their squared amplitudes at a uniformly random vertex $x_0$, the resulting weighted mass does not concentrate in small balls around $x_0$; this is a form of eigenvector delocalization.
Further, \cite{geisinger2015poisson} showed a criterion for the presence of Poisson spectral statistics based on the fractional moment decay of off-diagonal resolvent entries and applied it to the edges of the spectrum (for compactly supported disorder distributions). Finally, it was shown in \cite{anantharaman2017quantum} that in the weak-disorder regime, for energy windows in the  absolutely continuous spectrum for the Bethe lattice, most eigenfunctions are asymptotically equidistributed on their coordinates (again for compactly supported disorder distributions).

In this article, we are interested in showing the existence of mobility edges that sharply separate  regions of delocalization and localization in random regular graphs. Our main result establishes this for sufficiently large (but fixed) degrees $d$ and a wide range of disorder strengths when the disorder distribution is Gaussian. We choose to concentrate on this setting for brevity, and because it already illustrates the phenomenology predicted by the physics literature. However, our methods are rather flexible, and we comment on possible generalizations below, after stating our main result. 

\subsection{Main Result}\label{s:model}

Fix an integer $d\ge 3$, and let $\N$ denote the positive integers. 
When discussing $d$-regular random graphs with $N$ vertices, we always restrict to the subset of integers $N$ such that $N \ge d+1$ and $Nd$ is even (so that the configuration model construction of $d$-regular graphs is valid). Statements quantifying over $N$ implicitly quantify over only this subset, and all limits $N \rightarrow \infty$ are implicitly along the subsequence of these values. 

Let $\mathcal G_N$ be a (uniformly) random $d$-regular graph on the vertex set
$[N]=\{1,\dots,N\}$, and let $A_N$ denote its adjacency matrix.  
Let $(g_i)_{i=1}^\infty$ be i.i.d.\ Gaussian random variables with mean zero and variance one (independent of $\mathcal G_N$).  Define the diagonal matrix 
\[
V_N = \diag(g_1,\dots,g_N).
\]
For every $t>0$, we define the symmetric random matrix 
\begin{equation}\label{eq:Hn-def}
W_{N,t} = -t A_N + V_N.
\end{equation}
We will often abbreviate $W_{N,t} = W_N$, as the value of $t$ will never depend on $N$. 

Let $(\lambda_k,u_k)_{k=1}^N$ be an orthonormal eigendecomposition of $W_N$, so that 
\[
W_N u_k=\lambda_k u_k,\qquad \langle u_k,u_\ell\rangle=\delta_{k\ell},
\]
where $\delta_{k\ell}$ is the Kronecker delta function. The eigenvectors $u_k$ are generically determined only up to a sign, which we may select arbitrarily. 
We let $\Lambda$ denote the multiset of eigenvalues of $W_N$ (which may have duplicated elements due to repeated eigenvalues), and given an interval $I\subset \R$, we let $\Lambda_I$ be the multiset of eigenvalues contained in $I$. While these objects depend on $d$, $t$, and $N$, we suppress this dependence in the notation.

To study (de)localization of the eigenvectors $u_k$, we follow \cite{bordenave2013localization} and consider averages of (powers of) coordinates of eigenvectors associated with eigenvalues in small intervals of the real line. 

\begin{definition}\label[definition]{def:PI_QI_anderson}
Fix a compact interval $I\subset\mathbb R$ and index  $j\in[N]$. We define 
\[
P_I(j) = \frac{N}{|\Lambda_I|}\sum_{\lambda_i\in \Lambda_I} u_i(j)^2, \qquad 
Q_I=\frac{1}{N}\sum_{j=1}^N P_I(j)^2.
\]
When $|\Lambda_I|=0$, we set $P_I(j) = 0$. 
\end{definition}
To understand the meaning of these observables, we consider two extreme cases. First, suppose that all eigenvectors with eigenvalues in $I$ are as delocalized as possible, so that $u_i(j)^2 = N^{-1}$ for all $j \in [N]$ and $i$ such that $\lambda_i \in I$. Then $P_I(j) = 1$ for all $j$, implying $Q_I = 1$. In particular, $Q_I$ stays bounded as $N$ grows. Next, consider the case where $u_i(j) = \delta_{ij}$ for all $i,j \in [N]$, in which all eigenvectors are maximally localized. Then $P_I(j)=0$ if $\lambda_j \notin I$ and $N | \Lambda_I |^{-1}$ if $\lambda_j \in I$, implying $Q_I = N | \Lambda_I |^{-1}$. If $| \Lambda_I | \le C |I | N$ for all $N$, where $C>0$ is a constant that does not depend on $I$ or $N$, and $|I|$ denotes the length of $I$, then $Q_I$ becomes unbounded as $|I|$ tends to zero. 

Therefore, it is reasonable to distinguish between delocalization and localization based on the behavior of $Q_I$ as $I$ shrinks (assuming the relevant upper bound on $| \Lambda_I |$, which will be shown in \Cref{l:spectral_density_bounds}  below). This observation motivates the next definition. 

\begin{definition}\label{def:deloc_loc_anderson}
Fix an energy $E\in\mathbb R$.
We say that \emph{delocalization holds at $E$} for $W_{N}$ if there exist
constants $C(E)>1$ and $c(E)>0$ such that for all $\varepsilon\in(0,c )$,
\[
\lim_{N\to\infty}\mathbb P\Big( Q_{I(\varepsilon)}\le C \Big)=1,
\]
where $I(\varepsilon)=[E-\varepsilon,\,E+\varepsilon]$. 
We say that \emph{localization holds at $E$} if for every $D>0$,  there exists
$c(D)>0$ such that for all $\varepsilon\in(0,c)$,
\[
\lim_{N\to\infty}\mathbb P\Big( Q_{I(\varepsilon)}\ge D\Big)=1.
\]
\end{definition}
We now state our main theorem. The scaling of the inverse disorder strength as $(d \ln d)^{-1}$ is conventional; it ensures the mobility edge locations tend to constant-order limits as $d$ grows large. The constants arise from the fact that the supremum of the density of a mean zero, variance one Gaussian random variable is $(2 \pi )^{-1/2}$.

\begin{theorem}\label{t:main}
For every real number $\L >0$, there exists a constant $d_0(\L) > 0$ such that the following holds for all $d \ge d_0$. Fix $g \in \R$ such that 
\[
\sqrt{\frac{\pi}{8}} + \frac{1}{\L} < g < \L,
\]
and let 
\[ t= g ( d \ln d )^{-1},
\qquad
\mathfrak E = \sqrt{2 \ln\!\left( \frac{4g}{\sqrt{2\pi}} \right)} .
\]
Then there exists $\mathfrak M(d,g) > 0$ such that $|\mathfrak M  - \mathfrak E| < \L^{-1}$, localization holds for $W_{N}$ at all values in the set $\{E \in \R : |E| > \mathfrak M\}$, and delocalization holds for $W_{N}$ at all values in the set $\{E \in \R : |E| < \mathfrak M\}$.
\end{theorem}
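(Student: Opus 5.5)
The plan is to take the mobility edge $\mathfrak M$ from the Bethe-lattice result of Aggarwal--Lopatto \cite{aggarwal2025mobility}, where the tight-binding operator $H_t = -tA + V$ on the infinite $d$-regular tree $\bbT_d$ has purely absolutely continuous spectrum on $(-\mathfrak M,\mathfrak M)$ and pure point (dense) spectrum on $\{|E|>\mathfrak M\}$, with $|\mathfrak M-\mathfrak E|<\L^{-1}$. I would then transfer both phases to $W_N$, using that $\mathcal G_N$ converges locally to $\bbT_d$. Fix $E$ and $I=I(\varepsilon)$. The key identity is
\[
Q_I \approx \frac{N}{|\Lambda_I|^2}\sum_{j} \Big(\sum_{\lambda_i\in I}u_i(j)^2\Big)^2 .
\]
I would express $P_I(j)$ through the spectral measure $\mu_j$ of $W_N$ at $j$, namely $P_I(j)=N\mu_j(I)/|\Lambda_I|$. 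By \Cref{l:spectral_density_bounds}, $|\Lambda_I|\asymp N\,\rho(E)|I|$, where $\rho$ is the Bethe-lattice density of states, which is positive and bounded because the disorder is Gaussian. So $Q_I \asymp |I|^{-2}\,\frac1N\sum_j \mu_j(I)^2$. Both phases therefore reduce to estimating the empirical average of $\mu_j(I)^2$ over vertices.

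\textbf{Delocalized side.} I would bound $\mu_j(I)\le 2|I|\,\Im G_{jj}(E+\iu\eta)$ with $\eta\asymp|I|$, up to a smoothing error. Since $\mu_j(I)\lesssim \eta\,\Im G_{jj}(z)$ for $z=E+\iu|I|$, it suffices to show that $\frac1N\sum_j (\Im G_{jj}(z))^2$ stays bounded uniformly as $\eta\downarrow 0$ for a fixed $\eta$-scale. I would proceed in two steps.
\begin{enumerate}
\item By local convergence and a resolvent-truncation estimate (a Combes--Thomas type bound at fixed $\eta>0$), $G_{jj}(z)$ for $W_N$ is close to the tree resolvent $R_{oo}(z)$ for all but $o(N)$ vertices, namely those whose radius-$r$ neighbourhood is a tree.
\item By concentration (bounded differences in the graph switchings and Gaussian concentration in $V$), $\frac1N\sum_j F(G_{jj})$ converges in probability to $\E F(R_{oo}(z))$ for bounded Lipschitz $F$.
\end{enumerate}
Since $(\Im G_{jj})^2$ is unbounded, I need a uniform moment bound. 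I would use a truncation together with the deterministic bound $\Im G_{jj}\le \eta^{-1}$, plus the fact from \cite{aggarwal2025mobility} that in the delocalized phase $\E (\Im R_{oo}(E+\iu\eta))^{2}$ (or a suitable fractional-moment replacement) is bounded uniformly in $\eta$. The inputs here are the absolutely continuous spectrum and the bounded density. This gives $Q_I\le C$.

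\textbf{Localized side.} I would use the Cauchy--Schwarz lower bound together with the fact that on the tree $\Im R_{oo}(E+\iu\eta)\to 0$ in probability as $\eta\to0$ for $|E|>\mathfrak M$, since the spectral measure is pure point there. Meanwhile $\E\,\Im R_{oo}$ stays bounded below by $\pi\rho(E)>0$. Hence most of the expected mass sits on rare vertices where $\Im G_{jj}$ is huge. Quantitatively, let $S=\{j:\eta\Im G_{jj}\ge \delta\}$. Then
\[
\frac1N\sum_j\mu_j(I)\approx \rho|I|,
\]
and the fraction of this mass carried by $j\notin S$ tends to $0$ as $\eta\to 0$. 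By Cauchy--Schwarz,
\[
\frac1N\sum_j\mu_j(I)^2\ge \frac{(\rho|I|)^2}{|S|/N}.
\]
Here $|S|/N\to \P(\eta\Im R_{oo}\ge\delta)\ll |I|^{0}$, which makes $Q_I\ge D$ once $\varepsilon$ is small. The lemma-level inputs are again local convergence at fixed $\eta$ and concentration.

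\textbf{Main obstacle.} The hardest step is the order of limits. Everything is proved at fixed $\eta$ with $N\to\infty$, but $\mu_j(I)$ must be compared with the smoothed $\eta\Im G_{jj}$ without a positive-density error in eigenvalue counting near the endpoints of $I$. I would handle this by sandwiching the indicator of $I$ between Poisson-kernel averages over slightly enlarged and shrunken intervals. The boundary contributions would be controlled by the Wegner-type bound from \Cref{l:spectral_density_bounds}, which holds uniformly in $N$ for Gaussian disorder. After the sandwich, let $N\to\infty$ first, then $\eta\to0$, then $\varepsilon$.
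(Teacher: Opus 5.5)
Your skeleton matches the paper's. You bound $\sum_{\lambda_k\in I}u_k(j)^2\le 2\eta\,\Im G_{jj}(E+\iu\eta)$, then pass to the tree by local convergence and concentration. The genuine gap is the delocalized input. You need $\sup_{\eta\in(0,1]}\E[(\Im R_{00}(E+\iu\eta))^2]<\infty$, and you take it as known from \cite{aggarwal2025mobility}, with absolutely continuous spectrum and a bounded density of states as the inputs. Those inputs control only the first moment. Write $\Im R_{00}=B_\eta/((V_0-A_\eta)^2+B_\eta^2)$ with $V_0$ independent of $(A_\eta,B_\eta)$. The conditional second moment is then of order $\|\rho\|_\infty/B_\eta$, so what you really need is $\sup_\eta\E[B_\eta^{-1}]<\infty$, a quantitative lower-tail estimate for the cavity imaginary parts. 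A.c.\ spectrum only says the boundary values are nondegenerate, which is compatible with $\E[B_\eta^{-1}]\to\infty$. What the Bethe-lattice analysis actually supplies is the weak statement $\liminf_{\eta\to0}\P(\Im R_{00}>c)>c$ of \Cref{t:main2}. Upgrading it is the paper's main new technical step, \Cref{l:self-energy}, which proceeds as follows:
\begin{itemize}
\item a seed bound $\P(Y_\eta>y_*)\ge p_*$ and Cauchy tails for $U_\eta$ and $S_\eta$;
\item the recursion $F_\eta(\varepsilon)\le F_\eta(C_1\varepsilon^{1/3})^{d-1}+C_2\varepsilon^{1/3}$ from the recursive distributional equation;
\item a bootstrap to $\P(Y_\eta\le\varepsilon)\le\varepsilon^{31/301}$;
\item finally $\P(B_\eta\le\varepsilon)\le C\varepsilon^{\beta d}$ with $\beta d>1$, which is where $d\ge 10$ enters.
\end{itemize}
A fractional-moment substitute cannot help here. $Q_I$ is a second moment of $P_I(j)$, and the only available bound $P_I(j)\lesssim \Im G_{jj}$ forces a moment of order at least $2$.

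On the localized side, take $S=\{j:\eta\Im G_{jj}\ge\delta\}$ with $\delta$ fixed. Your claim that the $\mu_j(I)$-mass off $S$ vanishes as $\eta\to0$ does not follow from $\Im R_{00}\to 0$ in probability. That mass includes eigenvectors whose squared amplitude at $j$ is below roughly $\delta$, which one expects to be a positive, $\eta$-independent fraction. Controlling it would need tightness of $B_\eta/\eta$, which you do not have. Use a fixed threshold $K$ instead. Using $|\Lambda_I|\ge cN\eta$, the $P_I$-mass off $\{j:\Im G_{jj}\ge K\}$ is at most $CK^{1-s}\frac1N\sum_j(\Im G_{jj})^s$. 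The density of that set is at most $K^{-s}\frac1N\sum_j(\Im G_{jj})^s$. Both are small by concentration and the uniform integrability in \Cref{l:ui}, and Cauchy--Schwarz then gives $Q_I$ large. This is equivalent to the paper's route, which shows $Q_I(s/2)\to 0$ and uses H\"older with $\frac1N\sum_j P_I(j)=1$ to get $1\le Q_I(s/2)\,Q_I^{1-s/2}$. Consequently the ``main obstacle'' you name is not one: both phases need only the exact one-sided bound above, with no Poisson-kernel sandwiching. Finally, spectral type alone yields the tree resolvent statements only for Lebesgue-a.e.\ $E$. To cover every $E$ you need the pointwise-in-$E$ resolvent statements of \Cref{t:main2}.
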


\subsection{Proof Ideas} 

As mentioned previously, the spectral decomposition of the tight-binding model on the Bethe lattice was characterized  in \cite{aggarwal2025mobility} (for the disorder regime considered in our main result). Since the Bethe lattice is the local limit of random regular graphs, it is natural to attempt to use the Bethe lattice result to prove \Cref{t:main}, and this is indeed our strategy. However, this route requires suitable estimates for the tight-binding model on the regular random graph, which we enumerate in \Cref{s:criteria}. Specifically, we require concentration for fractional moments of the diagonal resolvent entries (\Cref{l:concentration}), upper and lower bounds on the eigenvalue density (\Cref{l:spectral_density_bounds}), convergence of the diagonal resolvent entries of the random regular graph to the diagonal resolvent entry at the root of the Bethe lattice (\Cref{thm:diag-resolvent-fixed-z}), and, in the delocalized regime,  a bound on the second moment of the imaginary part of the resolvent at the root of the Bethe lattice that is uniform in the imaginary part of the spectral parameter (\Cref{l:self-energy}). Of these, the last is perhaps the most technically challenging. We prove it by first establishing a Cauchy-type tail bound on the imaginary part of the relevant resolvent entry, then bootstrapping that bound to an improved polynomial tail decay bound. 

Generalizations of our result are possible in at least two directions. First, we have chosen to concentrate on the case of Gaussian disorder to streamline the presentation. The spectral decomposition of the tight-binding model on  the Bethe lattice was characterized in \cite{aggarwal2025mobility} for a wide class of unbounded potentials; in the general case, it is slightly more complicated to describe, with interleaved regions of localization and delocalization determined by the disorder density. The techniques we present suffice to establish the analogous result for random regular graphs with corresponding potentials under the additional assumption that the density decays sufficiently rapidly (for example, exponentially), so that the concentration argument for \Cref{l:concentration} below, based on Gaussian concentration for Lipschitz functions, can be replaced by one based on truncation and Talagrand's concentration inequality for bounded distributions.

Second, the requirement that $d$ be very large in \Cref{t:main} comes primarily from the analogous Bethe lattice result in \cite{aggarwal2025mobility}, not our arguments that show its conclusions transfer to random regular graphs. The main input we require from the limiting model is an understanding of how the imaginary part of the resolvent at the root behaves in the upper half plane as the imaginary part of its spectral parameter goes to zero (see \Cref{lem:tree_resolvent_criterion_anderson}). In particular, it seems likely that our methods suffice to transfer the nearly complete description of the phase diagram for the Bethe lattice at low disorder recently established in \cite{drogin2025regular} to random regular graphs. 

\subsection{Acknowledgments} 

P.\ L.\ thanks A.\ Aggarwal for several helpful conversations. Both authors were partially supported by NSF grant DMS-2450004.

\section{Proof Outline}\label{s:proof}
\subsection{Notations and Conventions}
\label{s:bethelattice}
We set $\bbH = \{ z \in \C : \Im z > 0 \}$. For $z \in \bbH$, we typically use the notation $z = E + \iu \eta$, where $E = \Re z$ and $\eta = \Im z$. 
For all $z \in \bbH$, we define the resolvent of $W_N$ by 
\[
G_N(z) = (W_N - z)^{-1},
\]
and we frequently abbreviate $G=G_N$.

Given an integer $d \ge 3$, we let $\bbT = \bbT_d$ be a $d$-regular rooted tree graph. We denote the root by $0$, and the vertex set of $\bbT$ by $\bbV = \bbV^{(d)}$. For all $v \in \bbV$, we let $\delta_v$ denote the unit vector supported at $v$, meaning that $\delta_v(w) = \delta_{vw}$ for all $w \in \bbV$. 

We define the adjacency operator $A = A^{(d)}$ for $\bbT$ by setting $\langle \delta_v, A \delta_w \rangle = 1$ if there is an edge between $v$ and $w$ in $\bbT$ and setting $\langle \delta_v, A \delta_w \rangle = 0$ otherwise. It extends by linearity to a bounded operator $\ell^2(\bbV)$. Further, we let $(g_v)_{v \in \bbV}$ be a collection of i.i.d.\ Gaussian random variables with mean zero and variance one. We define an unbounded operator $V= V^{(d)}$ on $\ell^2(\bbV)$ by setting $\langle \delta_v, V \delta_w \rangle = g_v$ if $v=w$ and $\langle \delta_v, V \delta_w \rangle = 0$ otherwise, then extending by linearity. Then, for every $t >0$, we define the operator 
\[
H_{d,t} = - t A^{(d)} + V^{(d)}.
\]
It is well-known that $H_{d,t}$ is unbounded and self-adjoint on the domain of $V^{(d)}$; see, e.g., \cite{kirsch2008invitation}. 
For all $z \in \bbH$, we let $R(z)$ denote the resolvent of $H_{d,t}$.

\subsection{Mobility Edge on the Bethe Lattice}

The following theorem is a specialization of \cite[Theorem 1.3]{aggarwal2025mobility} to the case of the Gaussian disorder. 

\begin{theorem}\label{t:main2}
For every real number $\L >0$, there exists a constant $d_0(\L) > 0$ such that the following holds for all $d \ge d_0$. Fix $g \in \R$ such that 
\begin{equation}\label{e:g_conditions}
\sqrt{\frac{\pi}{8}} + \frac{1}{\L} < g < \L,
\end{equation}
and let 
\[ t= g ( d \ln d )^{-1},
\qquad
\mathfrak E = \sqrt{2 \ln\!\left( \frac{4g}{\sqrt{2\pi}} \right)} .
\]
Then there exists $\mathfrak M(d,g) > 0$ such that $|\mathfrak M  - \mathfrak E| < \L^{-1}$ and the following claims hold.
\begin{enumerate}
\item For all $E \in \R$ such that $|E| > \mathfrak M$,  we have $ \lim_{\eta \rightarrow 0} \Im R_{00}(E + \iu \eta) =0$ in probability. 
\item 
For all $E \in \R$ such that $|E| < \mathfrak M$, there exists $c(E) > 0$ such that \[\liminf_{\eta \rightarrow 0} \P ( \Im R_{00}(E + \iu \eta) >c ) >c.\]
\end{enumerate}
\end{theorem}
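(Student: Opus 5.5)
This statement is a specialization of \cite[Theorem 1.3]{aggarwal2025mobility}, so the plan is to check the hypotheses of that result for Gaussian disorder and translate its conclusions into items (1) and (2). That theorem is stated for $H = -tA + V$ on the $d$-regular Bethe lattice with potential density $\rho$ in some admissible class. Its hypotheses are, roughly, that $\rho$ is bounded, that it has unbounded support, and that it satisfies regularity and tail conditions (some form of smoothness and decay). It describes the spectrum through a threshold set determined by $\rho$: localization holds where $\rho(E)$ lies below a critical level depending on $g$, delocalization holds where it lies above, and the scaling $t = g (d\ln d)^{-1}$ is imposed.

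\textbf{Verifying the hypotheses.} For the standard Gaussian density $\rho(x) = (2\pi)^{-1/2} e^{-x^2/2}$, each condition is immediate. The density is smooth, strictly positive everywhere, bounded by $(2\pi)^{-1/2}$, and has all moments. In \cite{aggarwal2025mobility} the mobility edges are asymptotically the solutions of an equation of the form $\rho(E) = (4g)^{-1}$ in the large-$d$ limit; I would confirm the precise constant there.

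\textbf{Identifying the edge.} Solving $(2\pi)^{-1/2} e^{-E^2/2} = (4g)^{-1}$ gives $E^2 = 2\ln\bigl(4g/\sqrt{2\pi}\bigr)$, which is exactly $\mathfrak E^2$. The lower bound $g > \sqrt{\pi/8} + \L^{-1}$ is precisely $4g/\sqrt{2\pi} > 1$ with a quantitative margin. This guarantees $\mathfrak E > 0$ is well defined and bounded away from zero. It also guarantees that the set where $\rho(E)$ exceeds the critical level is a single interval $(-\mathfrak E, \mathfrak E)$, since the Gaussian is unimodal and symmetric. Hence the general theorem's interleaved structure collapses to one delocalized interval and two localized half-lines. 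The upper bound $g < \L$ keeps $\mathfrak E$ in a compact range, so the error bound $|\mathfrak M - \mathfrak E| < \L^{-1}$ follows from the uniform-in-compacts estimates of \cite{aggarwal2025mobility} once $d \ge d_0(\L)$. By symmetry of $\rho$ the edges are $\pm\mathfrak M$.

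\textbf{Translating the conclusions.} In \cite{aggarwal2025mobility} the conclusions are phrased via $\Im R_{00}(E + \iu\eta)$ as $\eta \to 0$. In the localized region they give vanishing in probability, which is the Simon--Wolff type statement and is item (1). In the delocalized region they give nonvanishing with positive probability, through the fractional-moment / Aizenman--Warzel criterion. If the reference states these instead as spectral type (pure point versus absolutely continuous spectrum), I would convert them as follows. Pure point spectrum near $E$ corresponds, by Simon--Wolff, to $\Im R_{00}(E + \iu\eta) \to 0$ almost surely for a.e.\ $E$. I would then upgrade to every $|E| > \mathfrak M$ using the fractional-moment bound $\E |R_{00}|^s$ together with decay of $\E |\Im R_{00}|^s$, which the reference proves directly for all $E$ in the region.

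\textbf{Main obstacle.} This translation step is the main obstacle: it requires matching the exact formulation of \cite[Theorem 1.3]{aggarwal2025mobility}, in particular pointwise-in-$E$ statements rather than almost-every-$E$ statements, and the exact critical constant $4g$.
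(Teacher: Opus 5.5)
Your route is the paper's: it also obtains the statement by specializing \cite[Theorem 1.3]{aggarwal2025mobility} to Gaussian disorder. It resolves your ``main obstacle'' the way you anticipate. The theorem there is phrased via absolutely continuous versus pure point spectrum, but the pointwise-in-$E$ resolvent claims (1)--(2) are proved as intermediate steps (e.g.\ \cite[Lemma 2.7]{aggarwal2025mobility}). The Simon--Wolff conversion you sketch cannot replace this. Spectral type yields only almost-every-$E$ information and no probability bound uniform in $\eta$, so the resolvent-level lemmas inside the reference are what carry the argument.

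The step you are missing is that the reference does not work on $\bbT_d$, contrary to your description of its setting. Its proof is for the tree whose root has degree $d-1$ (all other vertices have degree $d$). Its statements therefore concern the cavity variable $\Gamma_\eta$ of \eqref{eq:RDE-cavity-rev}, not $R_{00}$. The paper closes this by remarking that the same argument runs on $\bbT_d$. Alternatively, you can transfer directly through \eqref{eq:root-formula-rev}: write $\Im R_{00}=B_\eta/((V_0-A_\eta)^2+B_\eta^2)$, where $B_\eta=\eta+t^2\sum_{j=1}^d\Im\Gamma^{(j)}_\eta$ and $V_0$ is independent of $(A_\eta,B_\eta)$.

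For (1), $\Im\Gamma_\eta\to0$ in probability forces $B_\eta\to0$ in probability. Then $\P(\Im R_{00}>\epsilon\mid A_\eta,B_\eta)\le 2\|\rho\|_\infty(B_\eta/\epsilon)^{1/2}$, which gives the claim.

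For (2), three facts combine. First, $B_\eta\ge t^2\Im\Gamma^{(1)}_\eta>t^2c$ with probability exceeding $c$ for small $\eta$. Second, the Cauchy tails $\P(|\Gamma_\eta|>u)\le C/u$ hold for all $\eta$ using only the bounded density, so $|A_\eta|,B_\eta\le K$ outside an event of probability less than $c/2$ once $K$ is large. Third, on the intersection of these events, $|V_0-A_\eta|\le B_\eta$ has conditional probability at least $2t^2c\min_{|x|\le 2K}\rho(x)$, and it forces $\Im R_{00}\ge(2K)^{-1}$.
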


\begin{remark}
In \cite{aggarwal2025mobility}, the conclusion of the theorem is stated in a slightly different way, involving absolutely continuous and pure point spectrum. However, it is readily verified by inspecting the proof that the claims in \Cref{t:main2} are proved as intermediate steps (see, e.g.,  \cite[Lemma 2.7]{aggarwal2025mobility}). We also note that the proof in \cite{aggarwal2025mobility} is for a tree whose root degree is one less than each other vertex. However, as mentioned in that reference, it is straightforward to see that the same argument gives the conclusion for $\bbT_d$. 
\end{remark}

\subsection{Resolvent Criteria for Localization and Delocalization}

The following lemma is the primary novelty of this paper.  It shows that the conclusions of \Cref{t:main2} regarding the resolvent of the Anderson model at the root of the Bethe lattice imply the corresponding (de)localization behavior for the  matrices $W_N$. The proof is given in \Cref{s:criteria}.
\begin{lemma}\label[lemma]{lem:tree_resolvent_criterion_anderson}
Fix $E\in\mathbb R$, $d \in \N$ such that $d\ge 10$, and $t>0$. 
\begin{enumerate}
\item If
$ \lim_{\eta \rightarrow 0} \Im R_{00}(E + \iu \eta) =0$ in probability, then localization holds at $E$ for $W_N$. 
\item If there exists $c>0$ such that $\liminf_{\eta \rightarrow 0} \P ( \Im R_{00}(E + \iu \eta) >c ) >c $, then delocalization holds at $E$ for $W_N$.
\end{enumerate}
\end{lemma}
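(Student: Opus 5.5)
Following the strategy announced in the introduction, we reduce $Q_I$ to resolvent quantities at a spectral parameter $z=E+\iu\eta$ with $\eta$ comparable to $\varepsilon$, and then compare these with the Bethe lattice using local weak convergence. Let $I=I(\varepsilon)$ and $z=E+\iu\eta$. For $\lambda\in I$ and $\eta\ge\varepsilon$ we have $\eta/((\lambda-E)^2+\eta^2)\ge 1/(2\eta)$. Hence
\[
P_I(j)\le \frac{2N\eta}{|\Lambda_I|}\,\Im G_{jj}(z).
\]
Conversely, $\Im G_{jj}(z)$ is controlled by the mass of $u_i(j)^2$ over dyadic shells around $E$, and this gives a reverse bound via a Lorentzian average of $P_{I'}$ over intervals $I'\supset I$.

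The eigenvalue-counting input is \Cref{l:spectral_density_bounds}: with high probability $c|I|N\le|\Lambda_I|\le C|I|N$ for fixed $I$ (Gaussian disorder has bounded density). Combining this with the display gives
\[
Q_I\lesssim \frac1N\sum_j (\Im G_{jj}(E+\iu\varepsilon))^2 .
\]

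\textbf{Delocalization.} It suffices to bound $N^{-1}\sum_j (\Im G_{jj})^2$ by a constant, uniformly for $\eta=\varepsilon$ small, with high probability as $N\to\infty$. The steps are as follows.
\begin{enumerate}
\item Use \Cref{thm:diag-resolvent-fixed-z}: the empirical distribution of $(G_{jj}(z))_j$ converges to the law of $R_{00}(z)$ for fixed $z\in\bbH$. Because $|G_{jj}|\le\eta^{-1}$, the empirical second moment converges to $\E(\Im R_{00}(z))^2$.
\item Bound this limit uniformly in $\eta$ using \Cref{l:self-energy}.
\end{enumerate}
Step 2 is the real obstacle. We need $\sup_{\eta>0}\E(\Im R_{00}(E+\iu\eta))^2<\infty$ in the delocalized regime, and the hypothesis of part (2) only provides a lower bound with positive probability. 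I would attack it as follows. First, the Schur complement recursion $R_{00}=(g_0-z-t^2\sum_{v\sim0}R^{(0)}_{vv})^{-1}$ combined with the bounded Gaussian density gives a Cauchy-type tail $\P(\Im R_{00}>s)\lesssim s^{-1}$. Second, since $\Im R_{00}$ is large only when the denominator is small, and the denominator's imaginary part is a sum of $d$ nearly independent terms, I would bootstrap this into a tail of order $s^{-2-\delta}$. It is unclear to me whether the positivity hypothesis is needed for this at all; perhaps it is used only to place $E$ in the bulk.

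\textbf{Localization.} The lower bound must be derived in the other direction. By Cauchy--Schwarz,
\[
Q_I\ge\Big(\frac1N\sum_j P_I(j)\Big)^2=1,
\]
which is too weak, so we need more. Write $m_N=N^{-1}\sum_j\Im G_{jj}(z)$, which converges to $\E\Im R_{00}(z)$ and is of order $1$ by the density lower bound. Under the hypothesis of part (1), $\Im R_{00}(z)\to0$ in probability as $\eta\to0$, while its mean stays bounded below. Hence $\Im G_{jj}$ is concentrated on a vanishing fraction of vertices, and Cauchy--Schwarz on that support gives $N^{-1}\sum_j(\Im G_{jj})^2\to\infty$.

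To transfer this to $Q_I$, I would use the identity
\[
\frac1N\sum_j\Im G_{jj}(z)^2=\sum_{i,k}\frac{\eta^2}{|\lambda_i-z|^2|\lambda_k-z|^2}\,\frac1N\sum_j u_i(j)^2u_k(j)^2 .
\]
Decomposing dyadically in $|\lambda-E|$ bounds this by $\eta^{-2}\sum_{I'}w_{I'}Q_{I'}\,|\Lambda_{I'}|^2/N^2$ with summable weights. Divergence of the left side therefore forces $Q_{I'}\ge D$ for some nearby interval. The hypothesis holds on a neighborhood of energies only if we apply it at each energy. Instead, I would choose $\varepsilon$ small depending on $D$ and use that the hypothesis at $E$ alone controls all $\eta\le\varepsilon$; this gives $Q_{I(\varepsilon)}\ge D$ directly. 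The concentration statement \Cref{l:concentration} then upgrades convergence in expectation to convergence with probability tending to one.
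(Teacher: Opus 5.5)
There is a genuine gap in the localization half of your proposal.

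\textbf{Delocalization.} This half is the paper's argument: the pointwise bound $P_I(j)\le \frac{2N\eta}{|\Lambda_I|}\Im G_{jj}(z)$, the counting lower bound, convergence of the second moment, and \Cref{l:self-energy}. One correction: \Cref{thm:diag-resolvent-fixed-z} only controls the single entry $G_{11}$, so passing from $\frac1N\sum_j(\Im G_{jj})^2$ to $\E[(\Im G_{11})^2]$ needs \Cref{l:concentration} with $s=2$.

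Your doubt about whether the positivity hypothesis is needed is misplaced; it is indispensable. By \Cref{prop:local-dos-lower}, $\E[\Im R_{00}(E+\iu\eta)]\ge \pi c_0$. Without the hypothesis, $\Im R_{00}$ could tend to $0$ in probability, as it does under hypothesis (1). In that case $\E[(\Im R_{00})^2]$ must blow up as $\eta\to0$, since otherwise uniform integrability would force the mean to $0$. In the paper, the hypothesis supplies the seed $\P(Y_\eta>y_*)\ge p_*$ for a bootstrap on the \emph{lower} tail of the cavity imaginary parts. That bootstrap gives $\sup_\eta\E[B_\eta^{-1}]<\infty$, and the second-moment bound follows. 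This is not an upper-tail bootstrap for $\Im R_{00}$.

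\textbf{Localization: the gap.} The problem is the transfer step. Your dyadic expansion shows that divergence of $\frac1N\sum_j(\Im G_{jj}(E+\iu\eta))^2$ makes some weighted combination of the $Q_{I_m}$ large, where the $I_m$ are shells at distance about $2^m\eta$ from $E$. This is compatible with $Q_{I(\varepsilon)}$ staying bounded while the large overlap sits in an outer shell.

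There is no reverse inequality bounding $P_{I(\varepsilon)}(j)$ from below by $\Im G_{jj}$, because $\Im G_{jj}$ also sees eigenvalues outside $I(\varepsilon)$. Nor can largeness be moved inward: convexity only gives $Q_{I\cup J}\le\theta Q_I+(1-\theta)Q_J$. So the sentence claiming that the hypothesis at $E$ ``gives $Q_{I(\varepsilon)}\ge D$ directly'' is exactly the missing step, and it is not justified.

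\textbf{The missing idea.} Detect localization through a \emph{small fractional moment}, which needs only the upper bound on $P_I$. Combine $P_I(j)\le\frac{2N\eta}{|\Lambda_I|}\Im G_{jj}(z)$ with the counting bound, \Cref{l:concentration}, \Cref{thm:diag-resolvent-fixed-z}, \Cref{l:ui} and the hypothesis. This gives $Q_I(s/2)=\frac1N\sum_jP_I(j)^{s/2}\le\delta$ with probability tending to one, for $\eta$ small. H\"older applied to the normalization $\frac1N\sum_jP_I(j)=1$ then gives $1\le Q_I(s/2)\,Q_I^{1-s/2}$, so $Q_I\ge\delta^{-1/(1-s/2)}$.

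Equivalently, your Cauchy--Schwarz-on-the-support idea does work if you run it on $P_I(j)$ itself rather than on $\Im G_{jj}$. By the upper bound, $P_I(j)$ is small off a vanishing fraction of vertices, and the normalization then forces $Q_I$ to be large.
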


\subsection{Proof of Main Result}
We are now able to prove our main result.
\begin{proof}[Proof of \Cref{t:main}]
This is an immediate consequence of \Cref{t:main2} and \Cref{lem:tree_resolvent_criterion_anderson}.
\end{proof}

\section{Localization and Delocalization Criteria}\label{s:criteria}
This section is devoted to the proof of \Cref{lem:tree_resolvent_criterion_anderson}. 
\subsection{Resolvent Estimates}

The following lemma collects some basic facts about $G$, which follow readily from the definition of $G$ and the spectral theorem. Proofs may be found in, e.g., \cite{BenaychGeorgesKnowles2018}.

\begin{lemma}
Fix $z = E + \iu \eta \in \bbH$.
\begin{enumerate}
\item For all $i,j \in [N]$, we have the deterministic bound 
\begin{equation}\label{e:trivial}
|G_{ij} (z) | \le \eta^{-1}.
\end{equation}
\item For every $i \in [N]$, we have 
\begin{equation}\label{e:ward}
\sum_{j=1}^N \big|G_{ij}(z) \big|^2 = \frac{ \Im G_{ii} }{\eta}.
\end{equation}
\end{enumerate}

\end{lemma}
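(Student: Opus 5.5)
Both claims come from the spectral decomposition of the symmetric matrix $W_N$. Write $W_N = \sum_{k=1}^N \lambda_k u_k u_k^{\top}$ with $(u_k)$ an orthonormal basis of $\R^N$ and $\lambda_k$ real. Then $G(z) = (W_N - z)^{-1} = \sum_k (\lambda_k - z)^{-1} u_k u_k^{\top}$, which gives
\[
G_{ij}(z) = \sum_{k=1}^N \frac{u_k(i) u_k(j)}{\lambda_k - z}.
\]

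For the first claim, observe that $|\lambda_k - z| \ge \Im(\lambda_k - z) \cdot (-1) = \eta$ for every $k$, since $\lambda_k$ is real. It follows that $\|G(z)\|_{\mathrm{op}} = \max_k |\lambda_k - z|^{-1} \le \eta^{-1}$. Every matrix entry is bounded by the operator norm, because $|G_{ij}| = |\langle \delta_i, G \delta_j \rangle| \le \|G\|_{\mathrm{op}}$. This proves \eqref{e:trivial}.

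For the second claim, the left side of \eqref{e:ward} equals $(G G^*)_{ii}$. Since $W_N$ is real symmetric, $G(z)^* = G(\bar z)$, and the two resolvents commute because they are functions of the same matrix. By the spectral decomposition,
\[
(G G^*)_{ii} = \sum_k \frac{u_k(i)^2}{|\lambda_k - z|^2}.
\]
Taking imaginary parts in the formula for $G_{ii}$ gives
\[
\Im G_{ii} = \sum_k u_k(i)^2 \, \Im\frac{1}{\lambda_k - E - \iu\eta} = \sum_k \frac{\eta\, u_k(i)^2}{|\lambda_k - z|^2}.
\]
Dividing by $\eta$ yields \eqref{e:ward}.

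Equivalently, \eqref{e:ward} follows from the resolvent identity $G(z) - G(\bar z) = (z - \bar z) G(z) G(\bar z) = 2\iu\eta\, G G^*$. Taking the $(i,i)$ entry gives $2\iu \Im G_{ii} = 2\iu\eta \sum_j |G_{ij}|^2$, using $G_{ii}(\bar z) = \overline{G_{ii}(z)}$.

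Both parts are routine, and no step is a genuine obstacle. The only point needing care is the realness of the $u_k(i)$, or more generally replacing $u_k(i)^2$ by $|u_k(i)|^2$. This is automatic here because $W_N$ is real symmetric.
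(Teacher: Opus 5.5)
Your proposal is correct and takes the same route as the paper, which gives no argument beyond noting that both facts follow from the definition of $G$ and the spectral theorem, with a citation. Your spectral-decomposition computation, and the resolvent-identity variant for the Ward identity, fill in exactly that routine argument.
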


We also require the next lemma, on the uniform integrability of $\Im R_{00}$.
\begin{lemma}\label[lemma]{l:ui}
Fix $E \in \R$, $t>0$, and $r \in(0,1)$. The set of random variables $\{ (\Im R_{00}(E + \iu \eta ) )^r \}_{ \eta \in (0,1)}$ is uniformly integrable.
\end{lemma}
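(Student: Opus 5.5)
Uniform integrability follows once we show that $\sup_{\eta\in(0,1)}\E[(\Im R_{00}(E+\iu\eta))^{s}]<\infty$ for some $s\in(r,1)$. Since $(\Im R_{00})^r = ((\Im R_{00})^{s})^{r/s}$ with $r/s<1$, the de la Vallée-Poussin criterion then gives the claim. As $\Im R_{00}\le |R_{00}|$, it suffices to bound the fractional moment $\E|R_{00}(E+\iu\eta)|^{s}$ uniformly in $\eta\in(0,1)$, for a fixed $s\in(r,1)$.

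For this I will use the standard rank-one (Krein) formula in the diagonal potential at the root. Write $H_{d,t}=H'+g_0\,\delta_0\delta_0^{*}$, where $H'$ has the root potential set to zero. The Schur complement formula gives
\[
R_{00}(z)=\frac{1}{g_0-z-\Sigma(z)},\qquad \Sigma(z)=t^2\sum_{v\sim 0}R^{(0)}_{vv}(z),
\]
where $R^{(0)}$ is the resolvent of the operator on $\bbT\setminus\{0\}$. The key point is that $\Sigma(z)$ is independent of $g_0$ and has $\Im\Sigma\ge 0$. Conditioning on everything except $g_0$, we get
\[
\E\big[|R_{00}|^{s}\,\big|\,\Sigma\big]=\int \frac{1}{|x-w|^{s}}\,\frac{e^{-x^2/2}}{\sqrt{2\pi}}\,dx, \qquad w=z+\Sigma(z)\in\C.
\]
Here $\Im w>0$, so $|x-w|\ge|x-\Re w|$. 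The integral is therefore at most $\sup_{a\in\R}\int|x-a|^{-s}\phi(x)\,dx$, where $\phi$ is the Gaussian density. Splitting into $|x-a|\le1$ and $|x-a|>1$ bounds this by $\|\phi\|_\infty\cdot 2/(1-s)+1<\infty$. Taking expectations yields a bound independent of $\eta$, and indeed of $E$.

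The only point needing care is justifying the Schur complement formula for the unbounded operator on the infinite tree. This is standard: $H_{d,t}$ is self-adjoint on the domain of $V$, and the rank-one perturbation formula $R_{00}=(g_0-(\tilde R_{00})^{-1})^{-1}$ holds, where $\tilde R$ is the resolvent with $g_0=0$. Indeed, $1/\tilde R_{00}(z)=-z-\Sigma(z)$ has nonnegative imaginary part because $\tilde R_{00}$ is Herglotz. Phrasing the argument via Krein's formula with $\tilde R_{00}$ avoids needing the explicit form of $\Sigma$. We only need that $\tilde R_{00}$ is measurable with respect to $(g_v)_{v\ne0}$ and that $-1/\tilde R_{00}$ takes values in the closed upper half-plane. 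I expect this step, although routine, to be the only real obstacle.
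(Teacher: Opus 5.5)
Your proposal is correct and follows the paper's argument: both bound the fractional moment $\sup_{\eta}\E|R_{00}(E+\iu\eta)|^{s}$ uniformly in $\eta$ for some $s\in(r,1)$ (the paper takes $s=(1+r)/2$), dominate $(\Im R_{00})^s$ by $|R_{00}|^s$, and conclude by de la Vallée-Poussin. The only difference is that the paper cites \cite[Lemma 3.3]{aggarwal2025mobility} for the fractional moment bound, whereas you prove it directly via the standard one-site decoupling. There is one harmless sign slip: the rank-one formula is $R_{00}=(g_0+\tilde R_{00}^{-1})^{-1}$, and it is $-1/\tilde R_{00}=z+\Sigma$, not $1/\tilde R_{00}$, that lies in the upper half-plane. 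This does not affect your bound, which uses only $|x-w|\ge|x-\Re w|$, and that holds for every $w\in\C$.
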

\begin{proof}
Set $\alpha=(1+r)/2$. 
By \cite[Lemma 3.3]{aggarwal2025mobility}, applied with $L=0$ and exponent $\alpha$,
\[
\sup_{\eta\in(0,1)}
\E\!\left[\left(\big(\Im R_{00}(E+\iu\eta)\big)^r\right)^{\alpha/r}\right]
\le
\sup_{\eta\in(0,1)} \E\bigl[|R_{00}(E+\iu\eta)|^\alpha\bigr]
<\infty.
\]
Because $\alpha/r>1$, the claim follows from de la Vallée--Poussin's criterion.
\end{proof}
\subsection{Preliminary Lemmas}
To prepare for the proof of \Cref{lem:tree_resolvent_criterion_anderson}, four main ingredients are needed. 
First, we state a concentration inequality for the diagonal resolvent entries, which is shown in \Cref{s:concentration}. 
\begin{lemma}\label[lemma]{l:concentration}
Fix $s \in (0,  \infty)$, $t>0$, and $d\ge 3$. There exists $C(d,s,t) > 1$ such that for all $E\in \R$ and $\eta \ge (\log N)^{-1}$, 
\begin{equation}\label{eq:concentration-fractional}
\mathbb P\!\left(
\left|\frac{1}{N}\sum_{j=1}^N \big (\Im G_{jj}(z) \big )^{s}-\mathbb E\Big[\big (\Im G_{11}(z)\big)^{s}\Big] \right|
\le \eta
\right)\ge 1-C N^{-100}.
\end{equation}
\end{lemma}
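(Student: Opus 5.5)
We prove the concentration bound in two steps. First, with the graph $\mathcal G_N$ held fixed, we use Gaussian concentration in the potential $g=(g_1,\dots,g_N)$. Second, we average over the random graph using a bounded-differences (martingale) argument in the configuration model. Since the uniform regular graph is the configuration model conditioned to be simple, and this event has probability bounded below by $c(d)>0$, any $N^{-100}$ bound proved for the configuration model transfers at the cost of a constant factor. Set
\[
F(g,\mathcal G)=\frac1N\sum_{j}\big(\Im G_{jj}(z)\big)^s.
\]
By exchangeability of vertices, $\E[(\Im G_{11})^s]=\E F$.

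\textbf{Step 1: Lipschitz bound in $g$.} We have $\partial_{g_k} G_{jj}=-G_{jk}G_{kj}$. Also $\Im G_{jj}\ge0$ and $\Im G_{jj}\le \eta^{-1}$ by \eqref{e:trivial}. When $s\ge1$ the map $x\mapsto x^s$ is Lipschitz on $[0,\eta^{-1}]$ with constant $s\eta^{1-s}$. Hence
\[
|\nabla_g F|^2\le \frac{C}{N^2}\sum_k\Big(\sum_j |G_{jk}|^2\Big)^2 \le \frac{C}{N^2}\sum_k \eta^{-4}\le C\eta^{-4}N^{-1},
\]
where we used \eqref{e:ward} together with \eqref{e:trivial}, up to powers of $\eta^{-1}$. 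For $s<1$, $x^s$ is not Lipschitz at $0$. We handle this by first noting that $\Im G_{jj}\ge \eta\,\sum_i |G_{ji}|^2\ge \eta |G_{jj}|^2$. A cleaner route is to replace $x^s$ by $(x+N^{-10})^s$. This changes $F$ by at most $N^{-10s}\le \eta/4$ for large $N$, and the modified function is Lipschitz with constant $\lesssim N^{10(1-s)}$. That would be too large, so instead we truncate via the lower bound $\Im G_{jj}\ge \eta/(C+|g_j|+|E|)^2$. This follows from $G_{jj}=-(z-g_j+t^2\,\cdot)^{-1}$-type Schur complement bounds, since $|G_{jj}|^{-1}\le \eta^{-1}\cdot$poly and $\Im G_{jj}=\eta\sum_i|G_{ji}|^2\ge \eta|G_{jj}|^2$ with $|G_{jj}|\ge (|g_j|+dt+|E|+\eta)^{-1}$. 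Then on the event $\max_j|g_j|\le 20\sqrt{\log N}$, which has probability $1-N^{-150}$, each $\Im G_{jj}\ge \eta^{3}/C\log N$. There $x^s$ is Lipschitz with constant polylog$(N)$, and since $\eta\ge(\log N)^{-1}$ all constants are polylogarithmic. We then apply Gaussian concentration to a Lipschitz extension (e.g., compose with the clamp $x\mapsto\max(x,\eta^3/C\log N)$). This gives deviation $\eta/2$ from the conditional mean $\E[F\mid\mathcal G]$ with probability $1-\exp(-cN/\mathrm{polylog}N)$.

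\textbf{Step 2: concentration over the graph.} We expose the configuration-model matching pair by pair, which gives a Doob martingale for $\Phi(\mathcal G)=\E[F\mid\mathcal G]$. Switching one matched pair changes $A_N$ by a matrix $\Delta$ of rank at most $4$ with entries bounded by $1$. By the resolvent identity, $G'-G=-G(t\Delta)G'$. A rank-$4$ perturbation changes $\sum_j \Im G_{jj}$ by at most $O(\eta^{-1})$ through eigenvalue interlacing, i.e.\ $|\tr f(W)-\tr f(W')|\le 4\,\|f\|_{TV}$ with $f(x)=\Im (x-z)^{-1}$. For powers $s\ne1$ we cannot use the trace directly. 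Instead we bound $\sum_j|\Im G_{jj}-\Im G'_{jj}|\le \sum_j |(G\,t\Delta\,G')_{jj}|$. By Cauchy--Schwarz and \eqref{e:ward} this is $\le C\eta^{-2}$, and we combine it with the same lower-bound truncation as in Step 1. The result is that each martingale increment is bounded by $\mathrm{polylog}(N)/N$. With $Nd/2$ steps, Azuma gives deviation $\eta/2$ with probability $1-\exp(-cN/\mathrm{polylog}N)$. Combining Steps 1 and 2 and the simplicity conditioning yields \eqref{eq:concentration-fractional}.

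\textbf{Main obstacle.} The delicate point is the non-Lipschitz behaviour of $x^s$ at $0$ when $s<1$, in both steps. The plan hinges on a deterministic lower bound on $\Im G_{jj}$ in terms of $\eta$ and $|g_j|$; I expect the Schur-complement bound $|G_{jj}|\ge(|g_j|+dt+|z|)^{-1}$, which uses $\|tA\|\le td$, to supply it.
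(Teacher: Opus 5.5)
Your two-step architecture matches the paper's: Gaussian concentration in $(g_j)$ conditional on the graph (via $\partial_k G_{jj}=-G_{jk}G_{kj}$ and the Ward identity), then a Doob martingale over configuration-model pairings with switchings controlled by $\sum_j|(G\Delta G')_{jj}|\le C\eta^{-2}$, then conditioning on simplicity. The case $s\ge 1$ is fine.

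The gap is in the case $s\in(0,1)$. Your plan hinges on the deterministic bound $|G_{jj}|\ge(|g_j|+dt+|z|)^{-1}$, and that bound is false. By the Schur complement, $|G_{jj}|^{-1}=|g_j-z-t^2\langle a_j,G^{(j)}a_j\rangle|$, where $a_j$ is the $j$th column of $A_N$ with the $j$th entry deleted and $G^{(j)}$ is the resolvent of the minor. The bound $\|tA_N\|\le td$ does not control the last term. That term is only bounded by $t^2d/\eta$, and this size is attained: if $E$ is an eigenvalue of the minor with unit eigenvector $w$, then $\Im\langle a_j,G^{(j)}a_j\rangle\ge|\langle a_j,w\rangle|^2/\eta$, so $|G_{jj}|\le \eta/(t^2|\langle a_j,w\rangle|^2)$.

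The corrected bound $|G_{jj}|^{-1}\le |g_j|+|z|+t^2d/\eta$ still gives, on your event, a lower bound $\Im G_{jj}\ge\eta|G_{jj}|^2\gtrsim \eta^3/((1+|E|)^2\log N)$. This depends on $E$, and it must: for $E$ far from the spectrum, $\Im G_{jj}\approx \eta/E^2$. Consequently your Lipschitz constants grow polynomially in $|E|$. Your argument therefore does not produce a constant $C(d,s,t)$ valid for all $E\in\R$, as the lemma asserts; very large $|E|$ would need a separate argument that you do not supply.

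None of this is necessary. You rejected truncation because you truncated at $N^{-10}$, but the truncation error only needs to be at most $\eta/4$, so the level can depend on $\eta$ alone. Take $\delta=(\eta/4)^{1/s}$. Then $0\le (x\vee\delta)^s-x^s\le\delta^s=\eta/4$ for all $x\ge0$, and $x\mapsto (x\vee\delta)^s$ is $s\delta^{s-1}$-Lipschitz. It follows that $\frac1N\sum_j(\Im G_{jj}\vee\delta)^s$:
\begin{itemize}
\item is within $\eta/4$ of your $F$ deterministically;
\item has Lipschitz constant $s\delta^{s-1}\eta^{-2}N^{-1/2}$ in $g$;
\item changes by at most $32st\delta^{s-1}\eta^{-2}N^{-1}$ under a switching.
\end{itemize}
For $\eta\ge(\log N)^{-1}$ these are polylogarithmic multiples of $N^{-1/2}$ and $N^{-1}$, uniformly in $E$, and no lower bound on $\Im G_{jj}$ is needed. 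This is exactly how the paper proceeds.

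A smaller omission concerns the transfer to simple graphs. Conditioning on $\mathcal S_N$ transfers probabilities, but your configuration-model bound is centered at the configuration-model mean, not at $\E[(\Im G_{11})^s]$ for the uniform simple graph. You must also show the two means differ by $o(\eta)$. The paper does this via the martingale variance bound $\Var\le \frac{Nd}{2}L^2$ and Cauchy--Schwarz against $\one_{\mathcal S_N}$; integrating your tail bound would also work.
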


The next lemma shows that the diagonal resolvent entries of the Anderson model on the $d$-regular graph converge to the resolvent entry at the root of the $d$-regular tree.
The proof is postponed to \Cref{s:diagonal_convergence}. 
\begin{lemma}
\label[lemma]{thm:diag-resolvent-fixed-z}
Fix $d \ge 3 $, $t>0$, and $z\in \bbH$. 
There exists a coupling of $W_{N,t}$ and $H_{d,t}$  such that
\begin{equation}\label{limit: diagonal convergence}
\lim_{N\rightarrow \infty} G_{11}(z) = R_{00}(z)
\end{equation}
in probability.
\end{lemma}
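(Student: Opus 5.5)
We prove \Cref{thm:diag-resolvent-fixed-z} by coupling the two models on large balls and comparing resolvents through the Schur complement, exploiting that $\Im z = \eta > 0$ is fixed.

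\emph{Coupling.} It is standard (e.g.\ via the configuration model and McKay's estimates on short cycles) that for any fixed $r$, the ball $B_r(1)$ of radius $r$ around vertex $1$ in $\mathcal G_N$ is a tree with probability $1-O((d-1)^{2r}/N)$. On this event we fix a graph isomorphism $\phi: B_r(1)\to B_r(0)\subset \bbT$ with $\phi(1)=0$. We couple the potentials by setting $g_{\phi(v)} = g_v$ for $v\in B_r(1)$. All remaining Gaussians on $\bbT$ are taken independent of everything else, and on the complementary event the coupling is arbitrary. Letting $r=r_N\to\infty$ slowly (say $r_N = \lfloor c\log_{d-1} N\rfloor$ with $c$ small), the tree event still has probability tending to one.

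\emph{Comparison.} On the tree event, let $\Gamma = B_{r}(1)$ and $\partial\Gamma$ its outer vertex boundary. Consider $G^{\Gamma}:=( (W_N)|_{\Gamma} - z)^{-1}$, which is identical to the corresponding restricted resolvent $R^{\Gamma}$ of $H_{d,t}$ on $B_r(0)$. The resolvent identity gives
\[
G - G^{\Gamma}\oplus 0 = -G\,(W_N - W_N|_\Gamma\oplus W_N|_{\Gamma^c})\,(G^{\Gamma}\oplus G^{\Gamma^c}),
\]
so
\[
G_{11}-G^\Gamma_{11} = t\sum_{x\in \Gamma,\, y\notin\Gamma,\, x\sim y} G_{1y}\,G^{\Gamma}_{x1}.
\]
The same identity holds for $R$ on $\bbT$. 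It therefore suffices to show that
\[
\E\Big[\sum_{x\in\partial_{\mathrm{in}}\Gamma}|G^\Gamma_{x1}|\Big]\to 0,
\]
because the other factors are bounded: $|G_{1y}|\le \eta^{-1}$ by \eqref{e:trivial}, each $x$ has at most $d$ neighbours, and the analogous bounds hold for $R$.

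\emph{Main obstacle.} The hard step is decay of $G^\Gamma_{x1}$ on a finite tree at fixed $\eta$, uniformly and summed over the roughly $(d-1)^r$ boundary vertices. Two approaches are available.

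\begin{itemize}
\item[(a)] A Combes--Thomas estimate gives $|G^\Gamma_{x1}|\le C\eta^{-1}e^{-\mu\,\mathrm{dist}(x,1)}$ with $\mu\sim \eta/(td)$. This decay rate can be smaller than $\log(d-1)$, so summing over the sphere may fail.
\item[(b)] Ward's identity \eqref{e:ward} avoids this: $\sum_x |G^\Gamma_{x1}|^2\le \eta^{-2}$.
\end{itemize}

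The better route uses (b) together with an averaging trick. Replace the fixed radius $r$ by a cutoff chosen among radii $r\in\{r_N/2,\dots,r_N\}$. Since the spheres are disjoint, some radius $r$ has $\sum_{\mathrm{dist}(x,1)=r}|G^\Gamma_{x1}|^2\le 2\eta^{-2}/r_N$. However, turning this into an $\ell^1$ bound costs a factor $(d-1)^{r/2}$ by Cauchy--Schwarz, which is too much.

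I would therefore instead use the Ward identity in the full product form. By Cauchy--Schwarz,
\[
\Big|\sum_{x\sim y} G_{1y}G^\Gamma_{x1}\Big|\le \Big(\sum_y|G_{1y}|^2\Big)^{1/2}\Big(d\sum_{x\in S_r}|G^\Gamma_{x1}|^2\Big)^{1/2}.
\]
By Ward the first factor is at most $\eta^{-1}$, while the second is $O(\eta^{-1}r_N^{-1/2})$ for a good radius obtained by pigeonholing over spheres. Since $\Gamma$ then depends on the randomness, I would apply the identity for each candidate radius and select the good one, which is fine because the bound holds deterministically for that radius. This gives $|G_{11}-G^\Gamma_{11}|\le C t\eta^{-2} r_N^{-1/2}$, and the identical argument on $\bbT$ (valid since $R$ also satisfies Ward's identity) yields the same bound for $|R_{00}-R^\Gamma_{00}|$. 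Because $G^\Gamma_{11}=R^\Gamma_{00}$ on the tree event, we conclude $|G_{11}-R_{00}|\to0$ in probability.
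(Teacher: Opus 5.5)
Your coupling and the comparison identity $G_{11}-G^{\Gamma}_{11}=t\sum_{x\in\Gamma,\,y\notin\Gamma,\,x\sim y}G_{1y}G^{\Gamma}_{x1}$ are fine. The step that fails is the pigeonhole. You claim that, since spheres are disjoint, some radius $r$ has $\sum_{\dist(x,1)=r}|G^{\Gamma}_{x1}|^2\le 2\eta^{-2}/r_N$. But $\Gamma=B_r(1)$, so $G^{\Gamma}=((W_N)|_{B_r(1)}-z)^{-1}$ is a different matrix for each candidate $r$. You are therefore looking at boundary masses of $r_N$ different vectors $(G^{B_r}_{x1})_x$. Ward's identity bounds each of them by $\eta^{-2}$ separately, and disjointness of spheres says nothing about their sum over $r$.

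The repair is to pigeonhole the other factor in your Cauchy--Schwarz bound. The full resolvent does not depend on $r$, and the cut edges of $B_r(1)$ join $S_r$ to $S_{r+1}$, so
\[
|G_{11}-G^{B_r}_{11}|\le td\Big(\sum_{y\in S_{r+1}}|G_{1y}|^2\Big)^{1/2}\Big(\sum_{x\in S_r}|G^{B_r}_{x1}|^2\Big)^{1/2}.
\]
The second factor is at most $\eta^{-1}$ by Ward for the restricted resolvent. Since $\sum_{r\le r_N}\sum_{y\in S_{r+1}}|G_{1y}|^2\le \Im G_{11}/\eta\le \eta^{-2}$, some $r\le r_N$ makes the first factor $O(\eta^{-1}r_N^{-1/2})$.

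There is a second, smaller gap. The good radius is random, and selecting it separately on $\mathcal G_N$ and on $\bbT$ can give different radii, in which case $G^{B_r}_{11}=R^{B_r}_{00}$ does not link the two estimates. Choose a common radius by pigeonholing $\sum_{y\in S_{r+1}(1)}|G_{1y}|^2+\sum_{y\in S_{r+1}(0)}|R_{0y}|^2$ (spheres around $1$ in $\mathcal G_N$ and around $0$ in $\bbT$), whose total over $r$ is at most $2\eta^{-2}$. On the tree event the balls and potentials agree for every $r\le r_N$ simultaneously, so you get $|G_{11}-R_{00}|\le C td\,\eta^{-2}r_N^{-1/2}$ there.

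So corrected, your route is a valid alternative to the paper's. It uses only Ward's identity and the second resolvent identity (valid on $\bbT$ because the cut operator is bounded), so it avoids Combes--Thomas and the appendix's domain arguments, at the cost of a slow rate and a random radius.

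Your objection (a) does not apply to what the paper actually does. It uses the set-to-set operator-norm bound $\|\chi_X(H-z)^{-1}\chi_Y\|\le 2\eta^{-1}e^{-\mu_{\mathrm{CT}}\dist(X,Y)}$ with $Y=\partial B_r(o)$ taken as a whole. It then writes $|\langle\delta_o,GKG^{(r)}\delta_o\rangle|\le\|\chi_{\{o\}}G\chi_{\partial B_r(o)}\|\,\|K\|\,\|\chi_{\partial B_r(o)}G^{(r)}\chi_{\{o\}}\|$. This controls the boundary in $\ell^2$ rather than entrywise, so no sum over the $(d-1)^r$ boundary vertices appears, and a deterministic radius $r_N=\lfloor\log\log N\rfloor$ suffices.
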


The following lemma  provides upper and lower bounds on the eigenvalue density of $W_N$. 
It is proved in \Cref{sect: Global Law}.

\begin{lemma}\label[lemma]{l:spectral_density_bounds}
For all $t,M >0$ and $d \ge 10$, there exists $c(d,t,M)>0$ such that the following holds. Fix $E\in [-M, M]$ and $\eta \in (0,1)$,  and set  $I = [E - \eta, E + \eta]$. 
Then 
\begin{equation}\label{eq:counting-lower}
\lim_{N\to\infty}\mathbb P\!\left(|\Lambda_I|\ge c N\eta\right)=1,
\qquad 
\lim_{N\to\infty}\mathbb P\!\left(|\Lambda_I|\le c^{-1} N\eta\right)=1.
\end{equation}
\end{lemma}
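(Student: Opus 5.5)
Both bounds in \eqref{eq:counting-lower} will come from the identity
\[
\frac1N\sum_{j=1}^N \Im G_{jj}(E+\iu\eta)=\frac1N\sum_{k=1}^N \frac{\eta}{(\lambda_k-E)^2+\eta^2},
\]
combined with \Cref{l:concentration} (taking $s=1$), \Cref{thm:diag-resolvent-fixed-z}, and an a priori Wegner-type estimate. Note that $\eta\in(0,1)$ is fixed while $N\to\infty$, so $\eta\ge(\log N)^{-1}$ eventually and \Cref{l:concentration} applies.

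\emph{Upper bound.} Every $\lambda_k\in I$ contributes at least $1/(2\eta)$ to the sum. Hence
\[
|\Lambda_I|\le 2N\eta\cdot\frac1N\sum_j\Im G_{jj}(E+\iu\eta).
\]
\Cref{l:concentration} puts the average within $\eta\le1$ of $\E\,\Im G_{11}(E+\iu\eta)$ with high probability, so it suffices to bound $\E\,\Im G_{11}(z)\le C$ uniformly in $N$, $\eta\in(0,1)$ and $|E|\le M$. I would obtain this from the Schur complement formula
\[
G_{11}=\bigl(g_1-z-t^2\langle a,G^{(1)}a\rangle\bigr)^{-1},
\]
where $a$ is the vector of neighbours of $1$ and $G^{(1)}$ is the resolvent with vertex $1$ removed. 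Conditional on everything except $g_1$, the quantity $G_{11}$ has the form $(g_1-w)^{-1}$ with $\Im w\ge \eta>0$. The standard Wegner/spectral averaging bound then gives
\[
\E_{g_1}\,\Im (g_1-w)^{-1}=\pi\,(\rho*P_{\Im w})(\Re w)\le \pi\|\rho\|_\infty,
\]
where $\rho$ is the Gaussian density and $P$ the Poisson kernel. This bound is uniform and needs no input from the tree.

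\emph{Lower bound.} Split the Poisson sum into eigenvalues in $I$ and outside $I$. The eigenvalues in $I$ contribute at most $|\Lambda_I|/(N\eta)$. For the rest I would not use $\eta$ itself but a smaller scale $\eta'=\delta\eta$. Eigenvalues outside $I$ then contribute at most
\[
\frac1N\sum_{|\lambda_k-E|>\eta}\frac{\eta'}{(\lambda_k-E)^2}.
\]
I would control this by dyadic decomposition, using the upper bound already proved on the windows $[E\pm 2^m\eta]$ for $2^m\eta\le 1$, together with the trivial bound $\eta'/(\lambda_k-E)^2\le \eta'$ once $|\lambda_k-E|\ge 1$. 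The total is $O(\delta)$ with an implicit constant depending on $M$ and $\|\rho\|_\infty$.

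It therefore remains to show that $\E\,\Im G_{11}(E+\iu\eta')\ge c_0$, uniformly in $|E|\le M$ and small $\eta'$; one then takes $\delta\ll c_0$. By \Cref{thm:diag-resolvent-fixed-z} and the bound $|G_{11}|\le\eta'^{-1}$, this expectation converges to $\E\,\Im R_{00}(E+\iu\eta')$. For the tree I would again use the Schur complement at the root:
\[
\Im R_{00}=\frac{\eta'+t^2\sum_{v\sim0}\Im R^{(0)}_{vv}}{|g_0-E-\iu\eta'-t^2\Sigma|^2},
\qquad \Sigma=\sum_{v\sim0}R^{(0)}_{vv}.
\]
The main obstacle is the lower bound on the numerator, $\E\Im R_{00}\gtrsim$ (a positive constant), uniform in $\eta'$. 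In the localized regime, for instance, $\Im R_{00}\to0$ in probability, so no pointwise bound is available. My plan instead is to use the unboundedness of the Gaussian density. Integrate the Schur-complement formula against $g_0$ and use $\rho\ge c_M>0$ on a window of width $O(1)$ around $\Re(E+t^2\Sigma)$. This window is available on the event that $|\Sigma|$ is bounded, which has probability bounded below by the fractional-moment bounds of \cite[Lemma 3.3]{aggarwal2025mobility}. Integrating a Lorentzian of total mass $\pi$ over that window gives $\E\,\Im R_{00}\ge c\,\pi\,\P(|\Sigma|\le K)\ge c_0$, since the Lorentzian has mass at least about $1$ on the window regardless of how small its imaginary width is. Feeding this back gives $|\Lambda_I|\ge c\,N\eta$ with probability tending to one.
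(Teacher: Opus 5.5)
Your proof is correct, but it follows a different route from the paper's.

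The paper never estimates $|\Lambda_I|$ directly at finite $N$. It proves the global law $m_N(z)\to m_{\bbT}(z)$ in probability at each fixed $z$, using \Cref{thm:diag-resolvent-fixed-z} and the $s=1$ concentration bound. It upgrades this to weak convergence in probability of the empirical spectral measure to the density of states $\nu$. Absolute continuity of $\nu$, from the tree Wegner estimate \Cref{l:wegner}, then gives $|\Lambda_I|/N\to\nu(I)$. Both bounds are read off from $\nu(I)\le C|I|$ and $\nu(I)\ge c|I|$. The lower bound is proved in \Cref{app:DOS-bounds} by bounding $\pi^{-1}\Im m_{\bbT}(E+\iu\eta)$ below uniformly and letting $\eta\downarrow 0$ through the Poisson approximate identity.

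You stay at finite $N$ throughout. Your upper bound is a finite-volume Wegner estimate (spectral averaging over $g_1$ in the Schur complement at vertex $1$) combined with concentration. It needs no tree input and holds with probability at least $1-CN^{-100}$ for each $\eta\ge(\log N)^{-1}$. Your lower bound replaces weak convergence by the Poisson sum at the smaller scale $\eta'=\delta\eta$, with the far spectrum handled dyadically by the upper bound. The only tree input is then $\E\Im R_{00}(E+\iu\eta')\ge c_0$. This is essentially the paper's \Cref{prop:local-dos-lower}: your good event $\{|\Sigma|\le K\}$ plays the role of the paper's $\{|\Gamma_y|\le 1 \text{ for all } y\sim 0\}$.

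The paper's route is shorter given standard black boxes, and it yields the stronger statement $|\Lambda_I|/N\to\nu(I)$. Yours dispenses with the tree Wegner estimate, absolute continuity of $\nu$, and the approximate-identity step, and gives an explicitly quantitative upper bound.

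Two small points.
\begin{enumerate}
\item At scale $\eta'$ the eigenvalues in $I$ contribute at most $|\Lambda_I|/(N\eta')=|\Lambda_I|/(N\delta\eta)$, not $|\Lambda_I|/(N\eta)$. Your final constant is therefore of order $\delta c_0$. This is harmless because $\delta$ depends only on $c_0$ and $\|\rho\|_\infty$.
\item Since $c$ may not depend on $E\in[-M,M]$, you need $\P(|\Sigma|\le K)\ge 1/2$ uniformly in $z$. The elementary bound $\E|R^{(0)}_{vv}(z)|^{s}\le C_s$ for $s\in(0,1)$, obtained by averaging over $g_v$, gives this directly via Markov's inequality. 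On that event the Lorentzian width $\eta'+t^2\Im\Sigma$ is also bounded above, which is what keeps its mass on your window bounded below.
\end{enumerate}
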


Finally, we state a bound on the second moment of $\Im R_{00}(z)$ that is uniform in $\eta$. The proof is given in \Cref{s:negativemoment}. The condition $d \ge 10$ could likely be improved, but we refrain from pursuing this to avoid unnecessary complications.

\begin{lemma}\label[lemma]{l:self-energy}
 Fix $E \in \R$, $d \ge 10$, and $t>0$. Suppose that there exists $c>0$ such that \[ \liminf_{\eta \rightarrow 0} \P ( \Im R_{00}(E + \iu \eta) >c ) >c.\]
Then
\begin{equation}\label{eq:EBinv}
\sup_{\eta\in (0,1]}\E\Big[\big( \Im R_{00}(E + \iu \eta) \big)^{2} \Big]<  \infty.
\end{equation}
\end{lemma}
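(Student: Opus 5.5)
We use the Schur complement recursion on the tree. For each neighbor $v$ of the root, let $R^{(v)}_{vv}$ denote the resolvent entry at $v$ of the operator restricted to the subtree rooted at $v$ (the root removed). Then
\[
R_{00}(z) = \frac{-1}{z - g_0 + t^2 \sum_{v \sim 0} R^{(v)}_{vv}(z)},
\]
and likewise, for each child subtree, $R^{(v)}_{vv} = -\bigl(z - g_v + t^2\sum_{w} R^{(w)}_{ww}\bigr)^{-1}$ with $d-1$ children $w$. The summands are i.i.d. and independent of $g_0$.

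Set $S = \sum_{v\sim 0} \Im R^{(v)}_{vv}$ and $X = \Re(\cdots)$. Then
\[
\Im R_{00} = \frac{\eta + t^2 S}{(E - g_0 + t^2 \Re\Sigma)^2 + (\eta + t^2 S)^2} \le \frac{1}{\eta + t^2 S}.
\]
So it suffices to bound $\E[(\eta+t^2S)^{-2}]$ uniformly in $\eta$, i.e. a negative second moment of a sum of $d$ i.i.d. copies of $Y = \Im R^{(v)}_{vv}$. Since $S \ge$ any $k$ of the summands, it is enough to have small-ball control:
\[
\P(Y \le \epsilon) \le C\epsilon^{\theta}\quad\text{for some } \theta>0.
\]
Then $\P(S\le\epsilon)\le (C\epsilon^\theta)^d$, which is integrable against $\epsilon^{-3}\,d\epsilon$ once $d\theta>2$. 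This is presumably where $d\ge 10$ enters. I need to handle the analogous recursion for the rooted subtree, but the structure is identical with $d-1$ children.

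\textbf{Step 1: Cauchy-type tail bound.} First obtain a crude small-ball estimate. Given $\Sigma$, the denominator involves $g_0$, which is Gaussian with bounded density. Hence
\[
\P\bigl(\Im R \ge u \mid \Sigma\bigr) \le \P\bigl(|E-g_0+t^2\Re\Sigma|\le (\Im\Sigma \cdot t^2/u)^{1/2} \bigm| \Sigma\bigr) \lesssim (S/u)^{1/2},
\]
which gives upper tails. For lower tails (small $Y$) we need $Y$ not small. We have
\[
Y = \frac{t^2 S'}{|\cdot|^2}\ge \frac{t^2S'}{2(E-g_v+t^2 X)^2+2(\eta+t^2S')^2}.
\]
This is small only if $S'$ is small, or $|g_v|$ or $|X|$ is large. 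Here $|X|\le \sum|R^{(w)}|$ has fractional moments bounded uniformly by \cite[Lemma 3.3]{aggarwal2025mobility}, and $g_v$ is Gaussian. The hypothesis $\liminf_\eta \P(\Im R_{00}>c)>c$ (transferred to the subtree root by comparing recursions) provides that each child independently has $Y_w>c$ with probability at least $c$.

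\textbf{Step 2: bootstrap.} Define
\[
p(\epsilon)=\sup_{\eta\in(0,1]}\P(Y\le\epsilon).
\]
From the recursion,
\[
p(\epsilon) \le \P\bigl(S'\le \sqrt{\epsilon}\bigr) + \P\bigl((E-g_v+t^2X)^2+(\eta+t^2S')^2 \ge t^2\epsilon^{-1/2}\bigr).
\]
The first term is at most $p(\sqrt\epsilon)^{d-1}$. The second is at most $C\epsilon^{\alpha/4}$ by Markov with fractional moments; here $S'$ is bounded via upper tails, and $S'\le\sum|R|$ again uses fractional moments. Starting from $p(\epsilon_0)\le 1-c^{d-1}<1$ for small $\epsilon_0$, which comes from the hypothesis, iterate:
\[
p(\epsilon)\le p(\epsilon^{1/2})^{d-1}+C\epsilon^{\alpha/4}.
\]
This yields $p(\epsilon)\le C\epsilon^{\theta}$ with $\theta$ close to $\alpha/4$, say $\theta\approx 1/8$ for $\alpha$ near $1/2$. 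Since $\theta$ is then too small to make $d\theta>2$ directly for $d=10$, sharpen by working with the root relation directly: $\P(S\le \epsilon)\le \P(\text{all } d \text{ of } Y_v\le\epsilon)\le p(\epsilon)^d$. Alternatively, repeat the bootstrap with better exponents, using the Cauchy tail from Step 1 in place of Markov. This improved polynomial decay is the main obstacle.

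\textbf{Step 3: conclude.} With $\P(S\le\epsilon)\le C\epsilon^{d\theta}$ and $d\theta>2$,
\[
\E[(t^2S)^{-2}] = \int_0^\infty 2u\,\P\bigl((t^2S)^{-1}>u\bigr)\,du<\infty
\]
uniformly in $\eta$, since $\P((t^2S)^{-1}>u)\le C u^{-d\theta}$ and the integrand $2u\cdot u^{-d\theta}$ is integrable at infinity. This gives \eqref{eq:EBinv}.

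One technical issue is that the hypothesis is only a $\liminf$ as $\eta\to0$. For $\eta$ bounded away from $0$, $\Im R_{00}\le\eta^{-1}$ trivially, so only small $\eta$ matter. There the hypothesis gives the required $\P(Y>c)>c$ for all $\eta<\eta_0$.
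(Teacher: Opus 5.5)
Your overall architecture matches the paper's: Schur complement at the root, a polynomial small-ball bound $\P(Y\le\epsilon)\lesssim\epsilon^{\theta}$ for the subtree-root imaginary part, obtained by bootstrapping a recursion $p(\epsilon)\le p(\epsilon^{a})^{d-1}+C\epsilon^{\gamma}$ from a seed supplied by the hypothesis, and independence of the $d$ branches. The argument does not close, however, and the cause is your first reduction. Bounding $\Im R_{00}\le(\eta+t^2S)^{-1}$ pointwise forces you to control $\E[(\eta+t^2S)^{-2}]$, so you need $d\theta>2$, i.e.\ $\theta>1/5$ when $d=10$.

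The exponent you actually obtain, $\theta\approx 1/8$, gives only $d\theta=5/4$. Even the optimistic $\theta\to\alpha/4$ would need $\alpha>4/5$. It would also need an interpolation argument that does not lose a factor $2$ between consecutive scales $\epsilon_{n+1}=\epsilon_n^{2}$: monotonicity alone gives only $p(\epsilon)\le p(\epsilon_n)\lesssim\epsilon_n^{\gamma}\le\epsilon^{\gamma/2}$ for $\epsilon\in(\epsilon_{n+1},\epsilon_n]$. Your proposed sharpening, $\P(S\le\epsilon)\le p(\epsilon)^{d}$, is exactly the bound you already used in Step 3, so it gains nothing. The other alternative is left as the unresolved ``main obstacle''.

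The missing idea is to integrate out the root potential instead of discarding it. Write $\Im R_{00}=B/((g_0-A)^2+B^2)$ with $A=E+t^2\Re\Sigma$, $B=\eta+t^2S$, and $\Sigma=\sum_{v\sim0}R^{(v)}_{vv}$; here $A$ and $B$ are independent of $g_0$. The bounded Gaussian density then gives
\[
\E\big[(\Im R_{00})^2\mid A,B\big]\le\|\rho\|_\infty\int_{\R}\frac{B^2}{(x^2+B^2)^2}\,dx=\frac{\pi\|\rho\|_\infty}{2B}.
\]
So it suffices to show $\sup_{\eta\le\eta_0}\E[B^{-1}]<\infty$, which needs only $d\theta>1$.

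This is what the paper does. It uses the elementary Cauchy tails $\P(|X_\eta|>u),\,\P(Y_\eta>u)\le C/u$ for the real and imaginary parts of the subtree-root resolvent. These follow directly from the bounded density of the site potential and are stronger than Markov with fractional moments. Splitting at $S_\eta\le\epsilon^{1/3}$, it obtains $F_\eta(\epsilon)\le F_\eta(C_1\epsilon^{1/3})^{d-1}+C_2\epsilon^{1/3}$ and bootstraps this to $\theta=\beta=31/301$. The condition $d\ge10$ is exactly what makes $\beta d=310/301>1$. Note that this $\beta$ is well below the $1/5$ your reduction would demand.

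A smaller point: the seed for the subtree root needs an actual argument. Condition on $(A,B)$, use your Step 1 bound to get $\P(B>b_*)\ge p_1$, then pigeonhole over the $d$ branches to get $\P(Y_\eta>y_*)\ge p_1/d$. This seed is only available for $\eta\le\eta_0$, so $p(\epsilon)$ should be a supremum over $(0,\eta_0]$ rather than $(0,1]$.
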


\subsection{Localization}\label{sect:loc}

For all $s>0$, we define 
\begin{equation}\label{eq:QI-def}
Q_I(s)=\frac{1}{N}\sum_{j=1}^N P_I(j)^s.
\end{equation}
Recalling \Cref{def:PI_QI_anderson}, we have  $Q_I=Q_I(2)$.

\begin{lemma}
\label[lemma]{lem:anderson-loc-QI}
Fix $s,\delta\in(0,1)$ and $E\in\mathbb R$, and suppose that 
\[
\lim_{\eta \rightarrow 0} \Im R_{00} (E + \iu \eta) = 0
\]
in probability. 
Let $I(\eta)=[E-\eta,E+\eta]$. Then there exists $\eta_0(s,\delta,E)>0$ such that for all $\eta\in(0,\eta_0)$,
\[
\lim_{N\to\infty}\mathbb P\!\left(Q_I\!\left(\frac{s}{2}\right)\le \delta\right)=1.
\]
\end{lemma}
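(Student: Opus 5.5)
The plan is to bound $P_I(j)$ pointwise by the imaginary part of the resolvent at scale $\eta$, and then use concentration, convergence to the tree, and the hypothesis. For $\lambda_i\in I(\eta)$ we have $\eta^2/((\lambda_i-E)^2+\eta^2)\ge 1/2$. The spectral decomposition $\Im G_{jj}(E+\iu\eta)=\sum_i \eta\, u_i(j)^2/((\lambda_i-E)^2+\eta^2)$ then gives
\[
\sum_{\lambda_i\in \Lambda_I} u_i(j)^2 \le 2\eta\, \Im G_{jj}(E+\iu\eta),
\qquad\text{hence}\qquad
P_I(j)\le \frac{2N\eta}{|\Lambda_I|}\,\Im G_{jj}(E+\iu\eta).
\]
By the lower bound in \Cref{l:spectral_density_bounds} (taking $M=|E|+1$), with probability tending to one we have $|\Lambda_I|\ge c N\eta$. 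On that event $P_I(j)\le 2c^{-1}\Im G_{jj}(z)$, and therefore
\[
Q_I(s/2)\le (2/c)^{s/2}\,\frac1N\sum_{j=1}^N \big(\Im G_{jj}(z)\big)^{s/2}.
\]
The constant $c$ depends on $d,t,|E|$ but not on $\eta$, which is essential.

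Next, fix $\eta$ and apply \Cref{l:concentration} with exponent $s/2$; this is valid since $\eta\ge(\log N)^{-1}$ for large $N$. It shows that the empirical average is within $\eta$ of $\E[(\Im G_{11}(z))^{s/2}]$ with probability $1-CN^{-100}$. By exchangeability of the vertices of the random regular graph, this expectation is the same for every $j$. By \Cref{thm:diag-resolvent-fixed-z}, $G_{11}(z)\to R_{00}(z)$ in probability under a coupling. Since $|\Im G_{11}|^{s/2}\le \eta^{-s/2}$ by \eqref{e:trivial}, bounded convergence gives
\[
\lim_{N\to\infty}\E\big[(\Im G_{11}(z))^{s/2}\big]=\E\big[(\Im R_{00}(z))^{s/2}\big].
\]

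Finally, the hypothesis says $\Im R_{00}(E+\iu\eta)\to 0$ in probability as $\eta\to0$. By \Cref{l:ui} with $r=s/2\in(0,1)$, the family $(\Im R_{00})^{s/2}$ is uniformly integrable, so $\E[(\Im R_{00}(E+\iu\eta))^{s/2}]\to 0$. Choose $\eta_0$ so that for $\eta<\eta_0$ both this expectation and $\eta$ itself are at most $\tfrac13 (c/2)^{s/2}\delta$. Combining the three steps gives $Q_I(s/2)\le\delta$ with probability tending to one for each fixed $\eta\in(0,\eta_0)$.

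The main obstacle is the ordering of limits: the constants from \Cref{l:spectral_density_bounds} must be uniform in $\eta$, and the convergence as $N\to\infty$ is taken at fixed $\eta$. The argument above respects this, so the remaining work is bookkeeping. One should also check that on the event $|\Lambda_I|=0$ the convention $P_I=0$ causes no issue; that event has vanishing probability anyway.
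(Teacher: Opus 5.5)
Your proposal is correct and follows the paper's proof essentially step for step. The steps are the spectral bound $P_I(j)\le 2N\eta|\Lambda_I|^{-1}\Im G_{jj}(z)$, the counting lower bound from \Cref{l:spectral_density_bounds}, concentration via \Cref{l:concentration}, bounded convergence through \Cref{thm:diag-resolvent-fixed-z}, and uniform integrability from \Cref{l:ui} to make $\E[(\Im R_{00})^{s/2}]$ small. Your explicit checks that $c$ is uniform in $\eta$ and that $\eta\ge(\log N)^{-1}$ for large $N$ are bookkeeping the paper leaves implicit.
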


\begin{proof}
We set $z=E+\iu\eta$, where $\eta$ is a parameter that may vary throughout the proof (but never depends on $N$). By the spectral theorem, we have for all $\eta >0$ that 
\begin{equation}\label{eq:spectral-Gij}
\Im G_{jj}(z)=\sum_{k=1}^N \frac{\eta\,|u_k(j)|^2}{(\lambda_k-E)^2+\eta^2}.
\end{equation}
For $\lambda_k\in I(\eta)$ we have $(\lambda_k-E)^2+\eta^2\le 2\eta^2$, hence
\[
\sum_{\lambda_k\in\Lambda_I}\big |u_k(j)\big |^2
\le \sum_{k=1}^N \frac{2\eta^2\big |u_k(j)\big |^2}{(\lambda_k-E)^2+\eta^2}
=2\eta \Im G_{jj}(z).
\]
By the first bound in \Cref{l:spectral_density_bounds}, we may restrict to the set where $| \Lambda_I | \neq 0$. 
Multiplying by $N/|\Lambda_I|$ gives
\begin{equation}\label{e:int_step}
P_I(j)\le \frac{2 N \eta}{|\Lambda_I|}\,\Im G_{jj}(z),
\end{equation}
and therefore
\begin{equation}\label{eq:QI-upper-via-ImG}
Q_I\!\left(\frac{s}{2}\right)
=\frac{1}{N}\sum_{j=1}^N P_I(j)^{s/2}
\le \left(\frac{2N\eta}{|\Lambda_I|}\right)^{s/2}\cdot
\frac{1}{N}\sum_{j=1}^N \bigl(\Im G_{jj}(z)\bigr)^{s/2}.
\end{equation}
Combining \eqref{eq:concentration-fractional}, \eqref{eq:counting-lower}, and \eqref{eq:QI-upper-via-ImG} yields a constant $c_1(E) >0$ such that 
\begin{equation} \label{limit: QI}
   \lim_{N\to \infty}\P \left( Q_I\!\left(\frac{s}{2}\right)
\le c_1^{-s/2}\left(\mathbb E \Big[\big (\Im G_{11}(z)\big )^{s/2} \Big]+\eta\right)\right)=1. 
\end{equation}

Next, recalling \eqref{limit: diagonal convergence} and using $|G_{11}(z)|\le 1/\eta$ (from \eqref{e:trivial}), uniform integrability yields
\begin{equation}
    \lim_{N\to \infty}\E\Big[\big(\Im G_{11}(z) \big)^{s/2}\Big]=\E\Big[\big(\Im R_{00}(z) \big)^{s/2}\Big].
\end{equation}
By \Cref{l:ui}, the set of random variables $\{ (\Im R_{00}(E+\iu\eta))^{s/2} \}_{\eta\in (0,1)}$ is uniformly integrable. 
Then the  assumption that $\lim_{\eta \rightarrow 0} \Im R_{00}(E+\iu\eta) = 0$ in probability  yields a constant $c_2(s,\delta, E)>0$ such that for all $\eta\in (0,c_2)$, we have 
\[ \E\Big[ \big (\Im R_{00}(z) \big)^{s/2}\Big]< \frac{c_1^{s/2}\delta}{8}.\] This implies that for sufficiently large $N$, 
\[
\E\Big [ \big (\Im G_{11}(z) \big)^{s/2} \Big]\le \frac{c_1^{s/2}\delta}{4}.
\]
Combining this with \eqref{limit: QI} and taking  $\eta_0 < \min\{c_2,\, c_1^{s/2}\delta/2\}$ completes the proof.
\end{proof}

\begin{proposition}\label[proposition]{prop:anderson-QI-blowup}
Under the hypotheses of \Cref{lem:anderson-loc-QI}, for every $D>0$ there exists $\eta_0(D,E)>0$ such that
for all $\eta\in(0,\eta_0)$,
\[
\lim_{N\to\infty}\mathbb P\!\big(Q_I \ge D\big)=1.
\]
\end{proposition}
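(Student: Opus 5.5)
The plan is to combine \Cref{lem:anderson-loc-QI} with the deterministic normalization $\frac{1}{N}\sum_j P_I(j)=1$, which holds whenever $|\Lambda_I|\neq 0$. The identity follows from orthonormality: $\sum_j u_i(j)^2=1$ for each $i$, so $\sum_j P_I(j) = \frac{N}{|\Lambda_I|}\,|\Lambda_I| = N$. By the first bound in \Cref{l:spectral_density_bounds}, the event $|\Lambda_I|\neq 0$ has probability tending to one, so we may work on it throughout.

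The key step is an interpolation (H\"older) inequality between the exponents $s/2<1<2$. Fix $s\in(0,1)$, say $s=1/2$, so that $p=s/2=1/4$. Write $P_I(j)=P_I(j)^{\theta p}P_I(j)^{2(1-\theta)}$ with $\theta p+2(1-\theta)=1$, that is, $\theta=1/(2-p)$. H\"older's inequality with exponents $1/\theta$ and $1/(1-\theta)$, applied to the normalized counting measure on $[N]$, gives
\[
1=\frac{1}{N}\sum_{j=1}^N P_I(j)\le Q_I(p)^{\theta}\,Q_I^{1-\theta}.
\]
Hence $Q_I\ge Q_I(p)^{-\theta/(1-\theta)}$. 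Since $\theta/(1-\theta)=1/(1-p)>0$, the map $x\mapsto x^{-\theta/(1-\theta)}$ is decreasing, so a bound $Q_I(p)\le\delta$ forces $Q_I\ge \delta^{-1/(1-p)}$.

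Given $D>0$, choose $\delta\in(0,1)$ with $\delta^{-1/(1-p)}\ge D$, for instance $\delta=\min\{1/2,\,D^{-(1-p)}\}$. Then apply \Cref{lem:anderson-loc-QI} with this $s$ and $\delta$. It provides $\eta_0(s,\delta,E)$, which depends only on $D$ and $E$ once $s$ is fixed, such that for $\eta<\eta_0$ we have $\P(Q_I(s/2)\le\delta)\to1$. Intersecting this event with $\{|\Lambda_I|\neq0\}$, which also has probability tending to one, yields $\P(Q_I\ge D)\to1$.

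There is no genuine obstacle here; the only points requiring care are the handling of the degenerate case $|\Lambda_I|=0$ (where $P_I\equiv 0$ and the normalization fails), which is covered by \Cref{l:spectral_density_bounds}, and the verification of the H\"older exponent bookkeeping.
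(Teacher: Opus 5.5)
Your proof is correct and follows the paper's argument. The paper also uses the normalization $\sum_j P_I(j)=N$ on $\{|\Lambda_I|\neq 0\}$ and a H\"older interpolation, with exponents $p=2-s/2$ and $\varepsilon=s/(4-s)$, to obtain $1\le Q_I(s/2)\,Q_I^{1-s/2}$; this is exactly your bound $Q_I\ge Q_I(s/2)^{-1/(1-s/2)}$, after which it invokes \Cref{lem:anderson-loc-QI} with $\delta$ chosen small in terms of $D$.
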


\begin{proof}
By the first bound in \Cref{l:spectral_density_bounds}, we may restrict to the set where $| \Lambda_I | \neq 0$. 
Let $\varepsilon\in(0,1)$ and  $p,q>1$ be parameters such that $1/p+1/q=1$. By H\"{o}lder's inequality,
\begin{equation}\label{e:p_power}
N=\sum_{j=1}^N P_I(j)
=\frac{1}{N}\sum_{j=1}^N (NP_I(j))^\varepsilon (NP_I(j))^{1-\varepsilon}
\le \Bigl(N^{\varepsilon p} Q_I(\varepsilon p)\Bigr)^{1/p}\,
\Bigl(N^{(1-\varepsilon)q} Q_I((1-\varepsilon)q)\Bigr)^{1/q}.
\end{equation}
With $s \in (0,1)$, we fix 
\[
\eps = \frac{s}{4-s}, \qquad p = 2 - \frac{s}{2}.
\]
Then $\varepsilon p=s/2$,  $(1-\varepsilon)q=2$, and $(p/q)=1-s/2$. Raising both sides of \eqref{e:p_power} to the $p$th power yields
\[
1\le Q_I(s/2)\cdot Q_I(2)^{\,1-s/2}=Q_I(s/2)\cdot Q_I^{\,1-s/2}.
\]
Thus, if $Q_I(s/2)\le \delta$ then $Q_I\ge \delta^{-1/(1-s/2)}$, and the claim follows from
\Cref{lem:anderson-loc-QI} by choosing an arbitrary value $s \in (0,1)$ and  $\delta$ small enough as a function of $D$.
\end{proof}
\subsection{Delocalization}\label{sect:deloc}

\begin{proposition}\label[proposition]{prop:anderson-deloc-QI}
Fix $E\in\mathbb R$ and assume that there exists a constant $c>0$ such that
\[
\liminf_{\eta\rightarrow 0}\mathbb P\!\big(\Im R_{00}(E+\iu\eta)>c\big)>c.
\]
Then there exist $\eta_0(E)>0$ and $C(E)>1$ such that for all $\eta\in(0,\eta_0)$,
\[
\lim_{N\to\infty}\mathbb P\!\left(Q_I\le C\right)=1,
\]
where $I = [E - \eta, E+ \eta]$.
\end{proposition}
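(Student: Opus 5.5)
We bound $P_I(j)$ from above by $\Im G_{jj}$, control $Q_I$ through the averaged second moment of $\Im G_{jj}$, and transfer that moment to the tree using \Cref{l:self-energy}.

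\emph{Pointwise bound.} As in \eqref{e:int_step}, on the event $|\Lambda_I|\neq 0$ we have, with $z=E+\iu\eta$,
\[
P_I(j)\le \frac{2N\eta}{|\Lambda_I|}\,\Im G_{jj}(z).
\]
Squaring and averaging over $j$ gives
\[
Q_I\le \Bigl(\frac{2N\eta}{|\Lambda_I|}\Bigr)^2\frac1N\sum_{j=1}^N\bigl(\Im G_{jj}(z)\bigr)^2 .
\]

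\emph{Counting and concentration.} By the lower bound in \Cref{l:spectral_density_bounds} (with $M=|E|+1$), the prefactor is at most $4c^{-2}$ with probability tending to one. By \Cref{l:concentration} with $s=2$ and $\eta$ fixed (so $\eta\ge(\log N)^{-1}$ for large $N$), the empirical average is within $\eta$ of $\E[(\Im G_{11}(z))^2]$ with probability at least $1-CN^{-100}$. It therefore suffices to bound $\limsup_{N\to\infty}\E[(\Im G_{11}(z))^2]$ uniformly in small $\eta$.

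\emph{Passage to the tree.} By \Cref{thm:diag-resolvent-fixed-z}, $\Im G_{11}(z)\to\Im R_{00}(z)$ in probability. The deterministic bound \eqref{e:trivial} gives $(\Im G_{11})^2\le\eta^{-2}$, so bounded convergence yields
\[
\lim_{N\to\infty}\E\bigl[(\Im G_{11}(z))^2\bigr]=\E\bigl[(\Im R_{00}(z))^2\bigr].
\]
The hypothesis of the proposition is exactly that of \Cref{l:self-energy}, which gives $\sup_{\eta\in(0,1]}\E[(\Im R_{00}(E+\iu\eta))^2]=:K<\infty$. Hence for every $\eta\in(0,\eta_0)$, with probability tending to one,
\[
Q_I\le 4c^{-2}(K+1+\eta).
\]
One may take $C=4c^{-2}(K+2)$ and any $\eta_0\le 1$; if \Cref{l:spectral_density_bounds} requires $d\ge10$, that is inherited. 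The intervals $I(\varepsilon)$ in the definition of delocalization correspond to $\varepsilon=\eta$, so this gives delocalization at $E$.

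\emph{Main obstacle.} The only nontrivial step is the uniform-in-$\eta$ second-moment bound, which is supplied by \Cref{l:self-energy}. Everything else is routine, except one subtlety: one cannot interchange the limits in $N$ and $\eta$ freely. This is why $\eta$ is fixed first, the limit $N\to\infty$ is taken using $\eta^{-1}$-boundedness, and only then is the $\eta$-uniform tree bound invoked. The lower bound on $\Im R_{00}$ in the hypothesis is used only through \Cref{l:self-energy}.
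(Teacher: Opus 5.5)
Your proposal is correct and follows essentially the same argument as the paper: the pointwise bound \eqref{e:int_step}, the counting lower bound from \Cref{l:spectral_density_bounds}, concentration with $s=2$, bounded convergence to $\E[(\Im R_{00}(z))^2]$ at fixed $\eta$, and then the $\eta$-uniform bound of \Cref{l:self-energy}. One cosmetic point: rename the constant from \Cref{l:spectral_density_bounds} (the paper calls it $c_1$), since you also use $c$ for the constant in the hypothesis.
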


\begin{proof}
Let $c_1(E)>0$ be such that \eqref{eq:counting-lower} yields $\lim_{N\to\infty}\mathbb P(|\Lambda_I|\ge c_1 N\eta)=1$. We restrict to the event where $|\Lambda_I|\ge c_1 N\eta$. 
Using \eqref{e:int_step}, we have 
\begin{equation}\label{eq:QI2-upper}
Q_I
=\frac{1}{N}\sum_{j=1}^N P_I(j)^2
\le \frac{4N^2\eta^2}{|\Lambda_I|^2}\cdot \frac{1}{N}\sum_{j=1}^N (\Im G_{jj}(z))^2.
\end{equation}
Using the concentration bound \eqref{eq:concentration-fractional} for $s=2$, we obtain 
\begin{equation} \label{limit:QI deloc}
    \lim_{N\to \infty}\P\left( Q_I\le 4c_1^{-2}\left( \E\Big[\big (\Im G_{11}(z) \big)^2\Big]+\eta \right)\right)= 1.
\end{equation}
From \eqref{limit: diagonal convergence}, together with the deterministic bound $|G_{11}|\le \eta^{-1}$ (from \eqref{e:trivial}), uniform integrability  gives
\begin{equation}\label{limit:L2}
    \lim_{N\to\infty}\E\Big[\big(\Im G_{11}(z) \big)^2\Big]=\E\Big[\big(\Im R_{00}(z)\big)^2\Big].
\end{equation}
Finally, combining \eqref{limit:QI deloc}, \eqref{limit:L2}, and \Cref{l:self-energy} finishes the proof.
\end{proof}

\subsection{Conclusion}
\begin{proof}[Proof of Lemma \ref{lem:tree_resolvent_criterion_anderson}]
    Combining \Cref{prop:anderson-QI-blowup} and \Cref{prop:anderson-deloc-QI} completes the proof.
\end{proof}

\section{Concentration of Resolvent Moments}\label{s:concentration}

In this section, we prove \Cref{l:concentration} using the Gaussian concentration inequality (for the randomness in $V$), a martingale argument based on switching edges in the regular graph (for the randomness in $A$), and a truncation argument.

\subsection{Preliminary Reduction}
For all symmetric $N\times N$ matrices $M$,  vectors $v\in \R^N$, $z \in \bbH$ and $s>0$, define 
\[ 
F_s(z, v , M)=\frac{1}{N}\sum_{i=1}^N \big(\Im S_{ii}(z) \big)^s,
\qquad S = \big( M + \diag(v_1, \dots, v_N) - z\big)^{-1}.
\]
We frequently abbreviate $F_s(z) = F_s(z, v ,M)$, and we write $v_N=(g_1,\dots,g_N)$ and set $\tilde F_s = F_s(z, v_N, -t A_N)$, so that with this choice of arguments, $S(z)=G(z)$. 
We establish the following preliminary concentration estimate in \Cref{s:gauss_conclusion} below, after some preliminary estimates on $F_s(z)$ (and a truncated counterpart). The first exponential term comes from Gaussian concentration in $V_N$, while the second comes from a coarse Azuma--Hoeffding bound for the graph randomness in $A_N$.

\begin{lemma}\label[lemma]{thm:conc-moments}
Fix $z=E+\iu\eta\in\bbH$ and $d\ge 3$. There exists a  constant $C(d) >0$ such that the following hold.

\begin{enumerate}
\item 
For every $s\ge 1$ and every $u>0$,
\begin{equation}\label{eq:conc-sge1}
\P\left(\Big|\tilde F_s(z)-\E \big[ \tilde F_s(z) \big] \Big| \ge u \right)
\le 2\exp\left(-\frac{N\eta^{2s+2}}{8s^2 } u^2\right)
+2\exp\left(-\frac{N\eta^{2s+2}}{C  t^2s^2} u^2\right).
\end{equation}
Further, when $s=1$, the same bound holds for the corresponding averages with $\Im S_{ii}(z)$ replaced by $\Re S_{ii}(z)$. 

\item 
Fix $s \in (0, 1)$. Then for every $u>0$ and every $\delta\in(0,\eta^{-1}]$,
\begin{equation}\label{eq:conc-untrunc-general}
\P\left(\Big|\tilde F_s(z)-\E \big[ \tilde F_s(z) \big] \Big|\ge u+2\delta^s\right)
\le 2\exp \left(-\frac{N\eta^{4}}{8s^2}\delta^{2-2s}\,u^2\right)
+2\exp \left(-\frac{N\eta^{4}}{C  t^2s^2}\delta^{2-2s}\,u^2\right).
\end{equation}
\end{enumerate}
\end{lemma}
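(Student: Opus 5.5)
We will condition on the graph and split the deviation $\tilde F_s-\E[\tilde F_s]$ into a Gaussian part and a graph part:
\[
\tilde F_s-\E[\tilde F_s]=\bigl(\tilde F_s-\E[\tilde F_s\mid A_N]\bigr)+\bigl(\E[\tilde F_s\mid A_N]-\E[\tilde F_s]\bigr).
\]
Each part will be bounded at level $u/2$; constants are absorbed into the displayed exponents.

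\emph{Gaussian part, $s\ge1$.} Fix $M=-tA_N$ and view $v\mapsto F_s(z,v,M)$ as a function on $\R^N$. Let $S=S(z,v,M)$. Differentiating,
\[
\partial_{v_k}S_{ii}=-S_{ik}S_{ki}.
\]
Hence
\[
\bigl|\partial_{v_k}F_s\bigr|\le \frac{s}{N}\sum_i (\Im S_{ii})^{s-1}|S_{ik}|^2\le \frac{s}{N}\eta^{-(s-1)}\sum_i|S_{ik}|^2 .
\]
By the Ward identity \eqref{e:ward}, this gives
\[
|\partial_{v_k}F_s|\le \frac{s\,\eta^{-s-1}}{N}.
\]
We need an $\ell^2$ bound on the gradient, not just an entrywise one. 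Write $\partial_{v_k}F_s=\frac{s}{N}\langle \delta_k, S^* D S\,\delta_k\rangle$ (up to taking imaginary parts), where $D=\diag((\Im S_{ii})^{s-1})$ satisfies $\|D\|\le\eta^{1-s}$. Then
\[
\sum_k|\partial_{v_k}F_s|^2\le \frac{s^2}{N^2}\sum_k\|S\delta_k\|^4\,\eta^{2-2s}\le \frac{s^2}{N}\,\eta^{-2s-2},
\]
using $\|S\delta_k\|^2\le \eta^{-2}$. So $F_s$ is $L$-Lipschitz with $L=s\eta^{-s-1}N^{-1/2}$. Gaussian concentration for Lipschitz functions then yields the first exponential term, with exponent $N\eta^{2s+2}u^2/(8s^2)$ after adjusting for the level $u/2$. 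For $s=1$ and $\Re S_{ii}$ the identical computation applies, with $D$ equal to the identity.

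\emph{Graph part, $s\ge1$.} Realize $\mathcal G_N$ via the configuration model, or via a uniform $d$-regular graph with a switching martingale. Expose the $Nd/2$ pairings one at a time and form the Doob martingale of $\E[\tilde F_s\mid A_N]$. Resampling one pairing corresponds to a switching. A switching changes $M$ by a matrix $\Delta$ of rank at most $4$ with entries $\pm t$, so $\|\Delta\|_{HS}\le Ct$.

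We bound the change in $F_s$ under a switching by integrating along $M+\theta\Delta$, $\theta\in[0,1]$. The derivative is
\[
\frac{s}{N}\sum_i(\Im S_{ii})^{s-1}\,(S\Delta S)_{ii}.
\]
Using $|\tr(DS\Delta S)|\le \|D\|\,\|\Delta\|_{HS}\,\|S\|^2\sqrt{\rank}$, the change is at most $Cts\,\eta^{-s-1}/N$.

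The martingale differences are therefore bounded by $Cts\,\eta^{-s-1}/N$. There are $O(dN)$ steps, so Azuma--Hoeffding gives the second exponential term. The main technical point is making the switching coupling rigorous, so that conditional expectations differ by at most the one-switch change times a constant. We would use the standard coupling in which, conditioned on the first $k$ pairings, the $(k+1)$st pairing is altered by a single switching. Alternatively, we would condition the configuration model on simplicity, which costs only a constant factor in probability since simplicity has probability bounded below.

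\emph{Part (2), $s\in(0,1)$.} Here $x\mapsto x^s$ is not Lipschitz near zero, so we truncate. Define $\phi_\delta(x)=\max(x,\delta)^s$. This function is Lipschitz with constant $s\delta^{s-1}$, and $|\phi_\delta(x)-x^s|\le\delta^s$.

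Replace $F_s$ by $F^\delta=\frac1N\sum_i\phi_\delta(\Im S_{ii})$. The derivative estimates above then hold with the factor $(\Im S_{ii})^{s-1}$ replaced by $s\delta^{s-1}$ (almost everywhere), so $F^\delta$ has Lipschitz constant $s\delta^{s-1}\eta^{-2}N^{-1/2}$. The same two arguments give the stated exponents $N\eta^4\delta^{2-2s}u^2$ for the deviation of $F^\delta$.

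Finally, since both $|\tilde F_s-F^\delta|\le\delta^s$ and $|\E[\tilde F_s]-\E[F^\delta]|\le\delta^s$, the extra $2\delta^s$ in the threshold absorbs the truncation error, which gives \eqref{eq:conc-untrunc-general}. The restriction $\delta\le\eta^{-1}$ only ensures the truncation is nontrivial, since $\Im S_{ii}\le \eta^{-1}$.
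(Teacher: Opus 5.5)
Your proposal is correct and follows essentially the same route as the paper. You condition on the graph and apply Gaussian concentration with the Ward-identity Lipschitz bound $s\eta^{-s-1}N^{-1/2}$. You control $\E[\tilde F_s\mid A_N]$ by a configuration-model Doob martingale, with increments bounded by a single-switching (rank $\le 4$) perturbation estimate, followed by Azuma--Hoeffding. For $s\in(0,1)$ you truncate at $\max(x,\delta)^s$, paying $2\delta^s$. Your path-integration/trace-norm bound for a switching is equivalent to the paper's resolvent-identity argument.

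The one step you only gesture at is the transfer from pairings to the uniform simple graph. Besides the factor $1/\P(\mathcal S_N)\le p_d^{-1}$, conditioning on simplicity also moves the centering mean. The paper bounds this shift by $|\mu_{\mathrm s}-\mu|\le C\sqrt{Nd}\,L$, using the martingale variance bound and Cauchy--Schwarz. It then splits on whether $u$ exceeds a constant multiple of $\sqrt{Nd}\,L$, and this split also absorbs the $p_d^{-1}$ prefactor into $C(d)$. So the switching coupling and the conditioning on simplicity are both needed; they are not alternatives.
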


\Cref{l:concentration} is an immediate consequence of  \Cref{thm:conc-moments}. 
\begin{proof}[Proof of \Cref{l:concentration}]
We begin with the case $s\ge 1$. This follows from setting $u= \eta$ in \eqref{eq:conc-sge1}. 
Next, for $s\in (0,1)$, if $\eta\ge 1$, then $ 0\le \tilde F_s(z)\le \eta^{-s}\le 1$,
and therefore
\[
\Big|\tilde F_s(z)-\E\big[\tilde F_s(z)\big]\Big|\le 1\le \eta
\]
deterministically, so the claim is trivial. Thus we may assume $\eta\le 1$.
Then the estimate follows from setting $u = \eta/2$ and $\delta = (\eta/4)^{1/s}$ in \eqref{eq:conc-untrunc-general}.
\end{proof}

\subsection{Gaussian Concentration}\label{subsec:conc-setup}

The following Gaussian concentration inequality is standard (see, e.g., \cite[Theorem 5.6]{boucheron2013concentration}). We say a function $f\colon \R^N \rightarrow \R$  is $L$-Lipschitz if $| f(x) - f(y) | \le L \| x - y\|_2$ for all $x,y \in \R^N$.
\begin{lemma}\label[lemma]{l:gaussian_concentration}
Let $X=(X_1, \dots, X_N)$ be a vector of $N$ independent mean zero, variance one Gaussian random variables. Let $f\colon \R^N \rightarrow \R$ be an $L$-Lipschitz function. For all $t >0$, 
\[
\P\left(\Big|
f(X) - \E\big[ f(X) \big] \Big| \ge t 
\right) \le 2\exp\left(
 - \frac{t^2}{2L^2}
\right).
\]
\end{lemma}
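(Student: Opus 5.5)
This is the classical Tsirelson--Ibragimov--Sudakov inequality. I would prove it by the entropy (Herbst) method, since that route gives exactly the constant $2L^2$ in the exponent; the Maurey--Pisier interpolation argument would only give the weaker constant $\pi^2 L^2/2$.

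\emph{Reduction to smooth $f$.} First I reduce to $f \in C^\infty$ with $\|\nabla f\|_2 \le L$ everywhere and all moments finite. Given a general $L$-Lipschitz $f$, set $f_\epsilon = f * \phi_\epsilon$, where $\phi_\epsilon$ is a Gaussian mollifier. Convolution preserves the Lipschitz constant, so $f_\epsilon$ is $L$-Lipschitz and smooth, hence $\|\nabla f_\epsilon\|_2 \le L$. Moreover $|f_\epsilon - f| \le L\,\E\|\epsilon Z\|_2 \to 0$ uniformly. Since $f$ is Lipschitz it has Gaussian tails, so $\E f_\epsilon(X) \to \E f(X)$. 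The tail bound then passes to the limit, at each continuity point and then for all $t$ by right-continuity.

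\emph{Main obstacle: the Gaussian log-Sobolev inequality.} The key input is
\[
\mathrm{Ent}\big(h(X)^2\big) \le 2\,\E\big[\|\nabla h(X)\|_2^2\big],
\]
where $\mathrm{Ent}(Y) = \E[Y\log Y] - \E[Y]\log \E[Y]$. I expect this to be the main work. I would prove it via the Ornstein--Uhlenbeck semigroup $P_s h(x) = \E\, h\big(e^{-s}x + \sqrt{1-e^{-2s}}\,X\big)$, in three steps:
\begin{enumerate}
\item Write the entropy as $\int_0^\infty \E\big[\|\nabla P_s F\|_2^2 / P_s F\big]\,ds$ with $F = h^2$, using $\frac{d}{ds}\E[P_sF\log P_sF] = -\E\big[\|\nabla P_sF\|_2^2/P_sF\big]$ (Gaussian integration by parts).
\item Use the commutation $\nabla P_s F = e^{-s} P_s \nabla F$ and Cauchy--Schwarz in the form $\|P_s\nabla F\|_2^2 \le P_s\big(\|\nabla F\|_2^2/F\big)\, P_s F$.
\item Conclude that the integrand is at most $e^{-2s}\,\E\big[\|\nabla F\|_2^2/F\big] = 4e^{-2s}\,\E\|\nabla h\|_2^2$, and integrate in $s$.
\end{enumerate}
An alternative route would be to prove the one-dimensional two-point inequality, pass to the Gaussian by the central limit theorem, and then tensorize across the $N$ independent coordinates.

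\emph{Herbst argument.} Apply the log-Sobolev inequality with $h = e^{\lambda f/2}$ for $\lambda > 0$. Then $\|\nabla h\|_2^2 = \tfrac{\lambda^2}{4}\|\nabla f\|_2^2 e^{\lambda f} \le \tfrac{\lambda^2 L^2}{4} e^{\lambda f}$. Setting $\Psi(\lambda) = \E e^{\lambda(f - \E f)}$, this gives
\[
\lambda\Psi'(\lambda) - \Psi(\lambda)\log\Psi(\lambda) \le \tfrac{\lambda^2 L^2}{2}\,\Psi(\lambda).
\]
Equivalently, $H(\lambda) = \lambda^{-1}\log\Psi(\lambda)$ satisfies $H'(\lambda) \le L^2/2$, with $H(0^+) = 0$ because $\E(f - \E f) = 0$. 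Integrating yields $\log\Psi(\lambda) \le \lambda^2 L^2/2$. Markov's inequality with $\lambda = t/L^2$ then gives
\[
\P\big(f - \E f \ge t\big) \le e^{-t^2/(2L^2)}.
\]
Applying the same bound to $-f$, which is also $L$-Lipschitz, and taking a union bound yields the factor $2$ in the statement.
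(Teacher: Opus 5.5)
Your proposal is correct and follows the same route as the source the paper relies on: the paper does not prove this lemma but cites \cite[Theorem 5.6]{boucheron2013concentration}, whose proof is the Gaussian log-Sobolev inequality combined with Herbst's argument. There the log-Sobolev inequality is obtained from the two-point inequality by tensorization and the central limit theorem (your alternative route, rather than the Ornstein--Uhlenbeck semigroup), and the two-sided form follows by applying the bound to $\pm f$ exactly as you do. One small correction in your smoothing step: $t\mapsto\P(|f(X)-\E f(X)|\ge t)$ is left-continuous rather than right-continuous, but since $\sup|f_\epsilon-f|\to0$ and $\E f_\epsilon(X)\to\E f(X)$ you can avoid continuity altogether via $\P(|f(X)-\E f(X)|\ge t)\le\P(|f_\epsilon(X)-\E f_\epsilon(X)|\ge t-\delta)$ for small $\epsilon$, and then let $\delta\to0$.
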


We use the notation $\partial_k F_s(z)$ to denote the partial derivative in $v_k$, and we write $\nabla F_s(z)$ for the gradient with respect to $v \in \R^N$. 
A straightforward calculation using the definition of $S$ gives for all $i,k \in [N]$ that 
\begin{equation}\label{eq:res-deriv}
\partial_{k}S_{ii}(z)=-S_{ik}(z)S_{ki}(z).
\end{equation}

We now derive some Lipschitz bounds that are necessary to apply \Cref{l:gaussian_concentration}. These bounds are uniform in the matrix $M$, and hence may be applied conditionally on $A_N$.

\begin{lemma}\label[lemma]{lem:lipschitz-sge1}
For all $s\ge 1$,
\[
\|\nabla F_s(z)\|_2 \le \frac{s}{\eta^{s+1}\sqrt{N}}.
\]
\end{lemma}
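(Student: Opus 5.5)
We differentiate $F_s$ term by term using \eqref{eq:res-deriv} and then use the Ward identity \eqref{e:ward} together with the trivial bound \eqref{e:trivial}. For each $k\in[N]$,
\[
\partial_k F_s(z) = \frac{1}{N}\sum_{i=1}^N s\big(\Im S_{ii}(z)\big)^{s-1}\,\Im\big(\partial_k S_{ii}(z)\big)
= -\frac{s}{N}\sum_{i=1}^N \big(\Im S_{ii}(z)\big)^{s-1}\,\Im\big(S_{ik}(z)^2\big),
\]
where we used that $S$ is complex symmetric, since $M$ and the diagonal part are real symmetric, so $S_{ik}S_{ki}=S_{ik}^2$. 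Because $s\ge 1$, the factor $(\Im S_{ii})^{s-1}$ is bounded by $\eta^{-(s-1)}$. This uses $0<\Im S_{ii}\le |S_{ii}|\le\eta^{-1}$, which holds for any real symmetric $M$ and real $v$, exactly as in \eqref{e:trivial}. Hence
\[
|\partial_k F_s(z)| \le \frac{s}{N\eta^{s-1}}\sum_{i=1}^N |S_{ik}(z)|^2 = \frac{s}{N\eta^{s-1}}\cdot\frac{\Im S_{kk}(z)}{\eta}\le \frac{s}{N\eta^{s+1}},
\]
by the Ward identity \eqref{e:ward} applied to $S$ (again valid for any real symmetric matrix) and then $\Im S_{kk}\le\eta^{-1}$.

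It remains to sum the squares. The crude bound above gives only
\[
\|\nabla F_s\|_2^2\le N\cdot \frac{s^2}{N^2\eta^{2s+2}} = \frac{s^2}{N\eta^{2s+2}},
\]
and taking square roots yields exactly $\|\nabla F_s(z)\|_2\le s\,\eta^{-s-1}N^{-1/2}$, which is the claim. No finer $\ell^2$ analysis is needed.

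The only points requiring care are the bound on $(\Im S_{ii})^{s-1}$, which is where $s\ge 1$ is used (for $s<1$ this factor blows up as $\Im S_{ii}\to 0$, which is why part (2) of \Cref{thm:conc-moments} needs truncation), and the remark that \eqref{e:trivial} and \eqref{e:ward} hold for $S$ with arbitrary real symmetric $M$ and real $v$. This uniformity in $M$ lets us apply the bound conditionally on $A_N$. I expect no real obstacle here; the computation is routine.
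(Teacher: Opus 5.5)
Your proof is correct and matches the paper's argument step for step. You use the chain rule with \eqref{eq:res-deriv}, the bound $(\Im S_{ii})^{s-1}\le \eta^{-(s-1)}$ (which needs $s\ge 1$), and the Ward identity to get $|\partial_k F_s|\le s/(N\eta^{s+1})$, then sum the $N$ squared coordinates. Your remark that $S$ is complex symmetric, so the column sum $\sum_i |S_{ik}|^2$ agrees with the row sum in \eqref{e:ward}, makes explicit a small step the paper leaves unstated.
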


\begin{proof}
By the chain rule,
\[
\partial_{k}F_s(z)=\frac{1}{N}\sum_{i=1}^N s \big (\Im S_{ii}(z) \big)^{s-1}\partial_{k}\big (\Im S_{ii}(z) \big).
\]
Using \eqref{eq:res-deriv} and the bound $\Im S_{ii}(z) \le \eta^{-1}$ (analogous to  \eqref{e:trivial}),
\[
\big|\partial_{k}F_s(z)\big|
\le \frac{s}{N}\sum_{i=1}^N (\Im S_{ii})^{s-1}|S_{ik}(z)|^2
\le \frac{s}{N}\cdot \eta^{-(s-1)}\sum_{i=1}^N |S_{ik}(z)|^2.
\]
By the Ward identity \eqref{e:ward} for $S$,
\[
\sum_{i=1}^N |S_{ik}(z)|^2=\frac{\Im S_{kk}(z)}{\eta}\le \frac{1}{\eta^2}.
\]
Therefore,
\[
\big|\partial_{k}F_s(z)\big|\le \frac{s}{N}\cdot \eta^{-(s-1)}\cdot \eta^{-2}
=\frac{s}{N\eta^{s+1}}.
\]
Finally,
\[
\|\nabla F_s(z)\|_2^2=\sum_{k=1}^N \big|\partial_{k}F_s(z)\big|^2
\le N\Big(\frac{s}{N\eta^{s+1}}\Big)^2
=\frac{s^2}{N\eta^{2s+2}},
\]
which implies the claim.
\end{proof}

For every $s\in(0,1)$, the map $x\mapsto x^s$ is not Lipschitz near $x=0$.
We therefore introduce a truncation at level $\delta>0$ and define, for any
$\delta\in(0,\eta^{-1}]$,
\[
F_{s,\delta}(z)=\frac{1}{N}\sum_{i=1}^N \big (\Im S_{ii}(z) \vee\delta \big)^s,
\]
and define $\tilde F_{s,\delta}$ analogously to $\tilde F_{s}$.

\begin{lemma}\label[lemma]{lem:lipschitz-trunc}
Fix $s\in(0,1)$ and $\delta\in(0,\eta^{-1}]$. Then, viewed as a function of
$v\in\R^N$, the map $F_{s,\delta}(z)$ is Lipschitz and
\[
\operatorname{Lip}(F_{s,\delta}(z))\le \frac{s}{\eta^{2}\sqrt{N}}\ \delta^{s-1}.
\]
\end{lemma}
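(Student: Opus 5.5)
The plan is to imitate the proof of \Cref{lem:lipschitz-sge1}, with the truncation absorbing the singularity of $x\mapsto x^s$ at zero. Set $\phi_{s,\delta}(x)=(x\vee\delta)^s$ for $x\ge 0$. This function is continuous and nondecreasing, and it is piecewise smooth: it is constant on $[0,\delta]$, and on $[\delta,\infty)$ its derivative is $s x^{s-1}$. Because $s-1<0$, this derivative is largest at $x=\delta$, so $\phi_{s,\delta}$ is Lipschitz on $[0,\infty)$ with constant $s\delta^{s-1}$. The entries $\Im S_{ii}(z)$ are positive, so only the behaviour of $\phi_{s,\delta}$ on $[0,\infty)$ matters.

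\textbf{Step 1: smoothness of the entries.} Each map $v\mapsto \Im S_{ii}(z)$ is smooth, since $S$ is the resolvent of a self-adjoint matrix at a point $z$ with $\Im z=\eta>0$. By \eqref{eq:res-deriv}, its partial derivatives satisfy
\[
|\partial_k \Im S_{ii}(z)|\le |S_{ik}(z)|^2 .
\]

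\textbf{Step 2: a bound on line segments.} Fix $v,w\in\R^N$ and set $\gamma(\tau)=v+\tau(w-v)$ for $\tau\in[0,1]$. Along $\gamma$, the composition $\phi_{s,\delta}(\Im S_{ii})$ is Lipschitz, hence absolutely continuous, and its almost-everywhere derivative is at most $s\delta^{s-1}\,|\tfrac{d}{d\tau}\Im S_{ii}|$. This avoids any differentiability question at the kink $x=\delta$. Write $d_k = w_k-v_k$. Averaging over $i$ and using Step 1 gives
\[
|F_{s,\delta}(z,w,M)-F_{s,\delta}(z,v,M)|\le \frac{s\delta^{s-1}}{N}\int_0^1\sum_{k}|d_k|\sum_i |S_{ik}(z)|^2\,d\tau .
\]

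\textbf{Step 3: the Ward identity.} By \eqref{e:ward} and the bound \eqref{e:trivial}, applied to $S$ at each point of the segment,
\[
\sum_i |S_{ik}(z)|^2=\frac{\Im S_{kk}(z)}{\eta}\le \eta^{-2}.
\]
Inserting this into Step 2 bounds the difference by $\frac{s\delta^{s-1}}{N\eta^2}\sum_k|d_k|$. By Cauchy--Schwarz, $\sum_k|d_k|\le \sqrt N\,\|w-v\|_2$, so
\[
|F_{s,\delta}(z,w,M)-F_{s,\delta}(z,v,M)|\le \frac{s\delta^{s-1}}{\eta^2\sqrt N}\,\|w-v\|_2 ,
\]
which is the claimed Lipschitz constant.

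Steps 1 and 3 are routine once \eqref{eq:res-deriv} and \eqref{e:ward} are in hand. The only delicate point is that $\phi_{s,\delta}$ is not differentiable at $x=\delta$, and Step 2 handles it by working along segments with absolute continuity. The hypothesis $\delta\le\eta^{-1}$ is not needed for the Lipschitz estimate itself; it only ensures the truncation is nontrivial, given $\Im S_{ii}\le\eta^{-1}$.
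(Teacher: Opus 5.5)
Your proof is correct and follows the paper's argument: the Lipschitz constant $s\delta^{s-1}$ for the truncated power, the bound $|\partial_k \Im S_{ii}|\le |S_{ik}|^2$ from \eqref{eq:res-deriv}, the Ward identity giving $\sum_i |S_{ik}|^2\le \eta^{-2}$, and Cauchy--Schwarz to pass from the $\ell^1$ to the $\ell^2$ norm. The only difference is cosmetic. You integrate along the straight segment from $v$ to $w$, while the paper varies one coordinate at a time. Both give the same $\ell^1$ bound, and your version keeps the integral inside the sum over $i$, which is slightly cleaner than the paper's step of exchanging the sum over $i$ with the supremum over $t$.
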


\begin{proof}
Let $\phi_\delta(x)=(x\vee\delta)^s$. Then $\phi_\delta$ is Lipschitz on
$[0,\infty)$, with
\[
\operatorname{Lip}(\phi_\delta)\le s\delta^{s-1}.
\]
Fix $k\in\{1,\dots,N\}$ and $h\in\R$. Then
\[
\begin{aligned}
\big|F_{s,\delta}(v+he_k;z)-F_{s,\delta}(v;z)\big|
&\le \frac{1}{N}\sum_{i=1}^N
\big|\phi_\delta(\Im S_{ii}(v+he_k;z))-\phi_\delta(\Im S_{ii}(v;z))\big| \\
&\le \frac{s\delta^{s-1}}{N}\sum_{i=1}^N
\big|\Im S_{ii}(v+he_k;z)-\Im S_{ii}(v;z)\big|.
\end{aligned}
\]
Since $v\mapsto \Im S_{ii}(v;z)$ is $C^1$, the fundamental theorem of calculus yields
\[
\big|\Im S_{ii}(v+he_k;z)-\Im S_{ii}(v;z)\big|
\le |h|\,\sup_{t\in[0,1]}\big|\partial_k(\Im S_{ii})(v+t h e_k;z)\big|.
\]
Hence
\[
\big|F_{s,\delta}(v+he_k;z)-F_{s,\delta}(v;z)\big|
\le \frac{s\delta^{s-1}|h|}{N}
\sup_{t\in[0,1]}\sum_{i=1}^N
\big|\partial_k(\Im S_{ii})(v+t h e_k;z)\big|.
\]
Note that \eqref{eq:res-deriv} implies $|\partial_{k}(\Im S_{ii})|\le |S_{ik}(z)|^2$.
Then combining this estimate with \eqref{e:trivial} and \eqref{e:ward}, we find
\[
\sum_{i=1}^N \big|\partial_k(\Im S_{ii})(z)\big|
\le \sum_{i=1}^N |S_{ik}(z)|^2
\le \frac{\Im S_{kk}(z)}{\eta}
\le \frac{1}{\eta^2}.
\]
Therefore
\[
\big|F_{s,\delta}(v+he_k;z)-F_{s,\delta}(v;z)\big|
\le \frac{s\delta^{s-1}}{N\eta^2}|h|.
\]
So the Lipschitz constant in each coordinate direction is at most
$\frac{s\delta^{s-1}}{N\eta^2}$.

For arbitrary $v,w\in\R^N$, varying the coordinates one at a time gives
\[
|F_{s,\delta}(v;z)-F_{s,\delta}(w;z)|
\le \frac{s\delta^{s-1}}{N\eta^2}\,\|v-w\|_1
\le \frac{s\delta^{s-1}}{\eta^2\sqrt N}\,\|v-w\|_2.
\]
This proves the Lipschitz bound.
\end{proof}

\subsection{Dependence on the Graph}\label{subsec:graph-conc}

We  use the pairing (configuration) model for random $d$-regular graphs. Let
\[
W=[N]\times[d],
\qquad
m=\frac{Nd}{2}.
\]
A \emph{pairing} of $W$ is a partition of $W$ into unordered sets of cardinality two. 
Given a pairing $\omega$ of $W$, we define the associated multigraph $\mathcal G(\omega)$ on the vertex set $[N]$ as follows. Each element $(i,a)\in W$ is called a half-edge attached to the vertex $i$. For every pair $
\{(i,a),(j,b)\}\in\omega$ 
we place one edge between the vertices $i$ and $j$. Thus, if $i\neq j$, the pair $\{(i,a),(j,b)\}$ contributes one edge joining $i$ and $j$, while if $i=j$, the pair $
\{(i,a),(i,b)\}$ 
contributes one loop at the vertex $i$. Distinct pairs of $\omega$ may project to the same unordered pair $\{i,j\}$, in which case this produces multiple edges.

We write $A(\omega)$ for the  adjacency matrix of $\mathcal G(\omega)$; if a loop
occurs, it contributes $2$ to the corresponding diagonal entry, so that every row sum is
equal to $d$. Let $\hat A_N=A(\omega)$ for $\omega$ chosen uniformly from the set of all
pairings of $W$, and let
\[
\mathcal S_N=\{\hat A_N \text{ is simple}\}.
\]

Every simple $d$-regular graph on $[N]$ corresponds to exactly $(d!)^N$ pairings, hence
\begin{equation}\label{eq:pairing-conditioned-simple}
\Law(\hat A_N \mid \mathcal S_N)=\Law(A_N),
\end{equation}
where $A_N$ denotes the adjacency matrix of the uniform simple $d$-regular graph on $[N]$.
Moreover, for fixed $d$, it is standard that
\[
\lim_{N \rightarrow \infty} 
\P(\mathcal S_N) =  \exp\left(\frac{1-d^2}{4}\right)
\]
(see, e.g., \cite[equation~(5)]{wormald1999models}), and therefore there exists a constant
$p_d>0$ such that
\begin{equation}\label{eq:simple-lower-bound}
\P(\mathcal S_N)\ge p_d
\qquad\text{for all }N .
\end{equation}

We also record the Azuma--Hoeffding martingale concentration estimate that we use. See, e.g., \cite[Chapter~2]{boucheron2013concentration}.

\begin{lemma}\label[lemma]{lem:azuma}
Let $(M_k,\mathcal F_k)_{k=0}^m$ be a martingale such that
\[
|M_k-M_{k-1}|\le c_k
\qquad\text{almost  surely for all }k\in[m].
\]
Then for every $u>0$,
\[
\P\Big(|M_m-M_0|\ge u\Big)
\le 2\exp\Big(-\frac{u^2}{2\sum_{k=1}^m c_k^2}\Big).
\]
\end{lemma}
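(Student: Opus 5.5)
The statement to prove is the Azuma--Hoeffding inequality (\Cref{lem:azuma}). I will use the standard exponential-moment (Chernoff) argument combined with Hoeffding's lemma, applied conditionally to each martingale increment.

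\textbf{Reduction to a one-sided bound.} Set $D_k = M_k - M_{k-1}$. By the martingale property, $\E[D_k \mid \mathcal F_{k-1}] = 0$, and by hypothesis $|D_k|\le c_k$ almost surely. Since $(-M_k)$ is also a martingale with the same increment bounds, it suffices to show
\[
\P(M_m - M_0 \ge u)\le \exp\Big(-\frac{u^2}{2\sum_k c_k^2}\Big).
\]
A union bound over the two tails then produces the factor $2$.

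\textbf{Conditional Hoeffding lemma.} The key input is that for every $\lambda\in\R$,
\[
\E\big[e^{\lambda D_k}\mid \mathcal F_{k-1}\big]\le e^{\lambda^2 c_k^2/2}.
\]
To prove this, use convexity of $x\mapsto e^{\lambda x}$ on $[-c_k,c_k]$:
\[
e^{\lambda x}\le \frac{c_k - x}{2c_k}e^{-\lambda c_k}+\frac{c_k+x}{2c_k}e^{\lambda c_k}.
\]
Taking conditional expectations and using $\E[D_k\mid\mathcal F_{k-1}]=0$ gives the bound $\cosh(\lambda c_k)$. Finally, compare Taylor series: $(2n)!\ge 2^n n!$ implies $\cosh y\le e^{y^2/2}$.

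\textbf{Iteration and optimization.} Condition successively on $\mathcal F_{m-1},\mathcal F_{m-2},\dots$ (tower property). Since $e^{\lambda(M_{k-1}-M_0)}$ is $\mathcal F_{k-1}$-measurable, this yields
\[
\E\big[e^{\lambda(M_m-M_0)}\big]\le \exp\Big(\frac{\lambda^2}{2}\sum_k c_k^2\Big).
\]
For $\lambda>0$, Markov's inequality gives
\[
\P(M_m-M_0\ge u)\le \exp\Big(-\lambda u+\frac{\lambda^2}{2}\sum_k c_k^2\Big).
\]
Choosing $\lambda = u/\sum_k c_k^2$ produces the claimed exponent.

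\textbf{Main obstacle.} The only nontrivial step is the conditional Hoeffding lemma, and within it the elementary inequality $\cosh y\le e^{y^2/2}$. Everything else is routine. One technical point: $M_0$ is implicitly integrable and $\mathcal F_0$-measurable. If the $c_k$ are deterministic constants, as assumed, no further integrability issues arise. The degenerate case $\sum_k c_k^2=0$ is trivial, since then $M_m=M_0$ almost surely.
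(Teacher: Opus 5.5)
Your argument is correct. It is the standard Chernoff-bound proof of the Azuma--Hoeffding inequality: the conditional Hoeffding lemma $\E[e^{\lambda D_k}\mid\mathcal F_{k-1}]\le\cosh(\lambda c_k)\le e^{\lambda^2c_k^2/2}$, iteration via the tower property, and the choice $\lambda=u/\sum_k c_k^2$. The paper gives no proof of its own and cites the result from Boucheron--Lugosi--Massart (Chapter~2), where essentially this argument appears. One cosmetic point: the convexity step divides by $c_k$, so it needs $c_k>0$, but if some $c_k=0$ then $D_k=0$ almost surely and that factor is trivial.
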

To apply the previous concentration bound, we require the following lemma on edge switchings of $A$. 
\begin{definition}\label[definition]{def:switching-pairing}
Let $\omega$ be a pairing of a finite set of half-edges $W$. Suppose 
$\{a,b\}, \{c,d\}\in \omega$ 
are two distinct pairs. A \emph{switching of the two pairs $\{a,b\}$ and $\{c,d\}$} is the operation of removing these two pairs and replacing them by one of the two alternative pairings of the four half-edges, namely $\{a,c\},\{b,d\}$ or $\{a,d\}, \{b,c\}$. 

\end{definition}
\begin{lemma}\label[lemma]{lem:graph-lipschitz}
Let $A$ and $A'$ be two symmetric adjacency matrices arising from $d$-regular pairings, and suppose that
$A$ and $A'$ differ by a switching of two pairs of half-edges. Let
\[
S=( -tA+\diag(v)-z)^{-1},
\qquad
S'=( -tA'+\diag(v)-z)^{-1}.
\]
Then the following hold.

\begin{enumerate}
\item For every $s\ge 1$,
\[
\big|F_s(z,v,-tA)-F_s(z,v,-tA')\big|
\le \frac{32 s|t|}{N\eta^{s+1}}.
\]

\item For every $s\in(0,1)$ and every $\delta\in(0,\eta^{-1}]$,
\[
\big|F_{s,\delta}(z,v,-tA)-F_{s,\delta}(z,v,-tA')\big|
\le \frac{32 s|t|}{N\eta^{2}}\,\delta^{s-1}.
\]
\end{enumerate}
\end{lemma}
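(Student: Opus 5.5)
A switching of two pairs changes $A$ by a matrix $\Delta = A' - A$ of rank at most $4$. The plan is to use the resolvent identity to express the change in the diagonal entries through this low-rank perturbation, and then control the sum over $i$ with the Ward identity \eqref{e:ward}.

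\textbf{Structure of $\Delta$.} Suppose the pairs $\{a,b\},\{c,d\}$ have their half-edges attached to vertices $i_a,i_b,i_c,i_d$. Removing them subtracts $e_{i_a}e_{i_b}^T+e_{i_b}e_{i_a}^T+e_{i_c}e_{i_d}^T+e_{i_d}e_{i_c}^T$ from $A$; this also holds for loops, where the diagonal entry drops by $2$. The new pairs add the analogous four terms. Hence
\[
\Delta=\sum_{\ell=1}^{8}\sigma_\ell\, e_{p_\ell}e_{q_\ell}^T,\qquad \sigma_\ell\in\{\pm1\},\quad p_\ell,q_\ell\in\{i_a,i_b,i_c,i_d\}.
\]

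\textbf{Resolvent identity.} Write $S'-S = S(tA'-tA)S' = t\,S\Delta S'$. Then for each $i$,
\[
|S'_{ii}-S_{ii}|\le |t|\sum_{\ell=1}^{8}|S_{ip_\ell}|\,|S'_{q_\ell i}|.
\]
Summing over $i$ and applying Cauchy--Schwarz together with \eqref{e:ward} for both $S$ and $S'$ gives
\[
\sum_i|S_{ip}||S'_{qi}|\le \Big(\sum_i|S_{ip}|^2\Big)^{1/2}\Big(\sum_i|S'_{qi}|^2\Big)^{1/2}\le \eta^{-2}.
\]
Therefore
\[
\sum_{i=1}^N|\Im S'_{ii}-\Im S_{ii}|\le \sum_{i=1}^N|S'_{ii}-S_{ii}|\le 8|t|\eta^{-2}.
\]

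\textbf{Part (1), $s\ge1$.} On $[0,\eta^{-1}]$ the map $x\mapsto x^s$ is $s\eta^{-(s-1)}$-Lipschitz, and both $\Im S_{ii}$ and $\Im S'_{ii}$ lie in this interval by \eqref{e:trivial}. Hence
\[
|F_s(z,v,-tA)-F_s(z,v,-tA')|\le \frac{1}{N}\, s\eta^{1-s}\cdot 8|t|\eta^{-2}=\frac{8s|t|}{N\eta^{s+1}},
\]
which is within the claimed bound $32s|t|/(N\eta^{s+1})$.

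\textbf{Part (2), $s\in(0,1)$.} Use the Lipschitz constant $s\delta^{s-1}$ of $\phi_\delta(x)=(x\vee\delta)^s$, as in \Cref{lem:lipschitz-trunc}. Combined with the summed bound above, this gives $8s|t|\delta^{s-1}/(N\eta^2)$, again within the claimed bound.

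The main point of care is the bookkeeping of $\Delta$ in degenerate cases: loops, coinciding vertices, and multi-edges. This only affects the count of rank-one terms, which is at most $8$ and in any case within the slack provided by the constant $32$. No other step is a real obstacle.
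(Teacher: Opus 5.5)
Your proof is correct and follows the same strategy as the paper. Both arguments use the resolvent identity $S'-S=tS(A'-A)S'$ for the low-rank switching perturbation, bound $\sum_i|S'_{ii}-S_{ii}|$ by a multiple of $|t|\eta^{-2}$, and then apply the Lipschitz constants of $x\mapsto x^s$ on $[0,\eta^{-1}]$ and of $\phi_\delta$.

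The only difference is in that middle step. The paper uses $\rank(S'-S)\le 4$, the Schur-test bound $\|A'-A\|\le 8$, and $\sum_i|X_{ii}|\le \|X\|_*\le \rank(X)\,\|X\|$. You instead expand $A'-A$ into eight signed elementary matrices $e_pe_q^T$, handling loops correctly via the factor-$2$ convention. You then bound each term by Cauchy--Schwarz and the Ward identity \eqref{e:ward}. This avoids the trace-norm inequality and gives the sharper constant $8$ in place of $32$.
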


\begin{proof}
Set $\Delta=-t(A'-A)$. Since $A$ and $A'$ differ by one switching of two pairs, the matrix
$A'-A$ is supported on the at most four vertices incident to the switched half-edges. In particular,
\[
\rank(\Delta)\le 4.
\]
Moreover, $A'-A$ has at most eight nonzero entries, each of magnitude at most $2$
(since loops contribute $2$ to the diagonal). Hence every row and every column of $A'-A$
has $\ell^1$-norm at most $8$, so by the Schur test,
\[
\|\Delta\|\le 8|t|.
\]
By the resolvent identity,
\[
S'-S=- S\Delta S'.
\]
Hence
\[
\rank(S'-S)\le 4,
\qquad
\|S'-S\|\le \|S\|\,\|\Delta\|\,\|S'\|\le \frac{8 |t|}{\eta^2}.
\]
Therefore
\[
\frac{1}{N}\sum_{i=1}^N \big|S'_{ii}-S_{ii}\big|
\le \frac{1}{N}\|S'-S\|_*
\le \frac{4}{N}\|S'-S\|
\le \frac{32 |t|}{N\eta^2},
\]
where $\|\cdot\|_*$ denotes the trace norm.

For $s\ge 1$, the map $x\mapsto x^s$ is $s\eta^{-(s-1)}$-Lipschitz on $[0,\eta^{-1}]$, so
\[
\big|F_s(z,v,-tA)-F_s(z,v,-tA')\big|
\le \frac{s}{N}\eta^{-(s-1)}\sum_{i=1}^N \big|\Im S'_{ii}-\Im S_{ii}\big|
\le \frac{32 s|t|}{N\eta^{s+1}}.
\]
This proves the first claim.

Next, let $\phi_\delta(x) = (x \vee \delta)^s$. Since $s \in (0,1)$, the map
$x \mapsto x^s$ is concave on $(0,\infty)$, and therefore $\phi_\delta$ is
Lipschitz on $[0,\infty)$ with
\[
\operatorname{Lip}(\phi_\delta) \le s\delta^{s-1}.
\]
Since $\Im S_{ii}, \Im S'_{ii} \ge 0$, it follows that
\[
\begin{aligned}
\big|F_{s,\delta}(z,v,-tA)-F_{s,\delta}(z,v,-tA')\big|
&\le \frac{1}{N}\sum_{i=1}^N
\big|\phi_\delta(\Im S'_{ii})-\phi_\delta(\Im S_{ii})\big| \\
&\le \frac{s\delta^{s-1}}{N}\sum_{i=1}^N
\big|\Im S'_{ii}-\Im S_{ii}\big| \\
&\le \frac{32s|t|}{N\eta^2}\delta^{s-1}.
\end{aligned}
\]
This proves the second claim.
\end{proof}

The following is the graph concentration estimate used in the proof of
\Cref{thm:conc-moments}.

\begin{lemma}\label[lemma]{lem:graph-concentration}
Fix $d\ge 3$. There exists a constant $C(d)>0$ such that the following holds.
Let $f$ be a real-valued function of the pairing-model adjacency matrix $A$ satisfying
\[
\big|f(A)-f(A')\big|\le L
\]
whenever $A$ and $A'$ arise from two pairings that differ by one switching of two pairs.
Then for every $u>0$,
\[
\P\Big(\big|f(A_N)-\E\big[f(A_N)\big]\big|\ge u\Big)
\le 2\exp\left(-\frac{u^2}{C NL^2}\right).
\]
\end{lemma}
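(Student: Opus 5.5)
The plan is to use the Doob martingale of $f(\hat A_N)$ with respect to revealing the pairing one pair at a time, apply \Cref{lem:azuma}, and then pass from the pairing model to the simple graph using \eqref{eq:pairing-conditioned-simple} and \eqref{eq:simple-lower-bound}.

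\textbf{Martingale on pairings.} Order the half-edges of $W$ in some fixed way. Generate a uniform pairing $\omega$ sequentially: at step $k$, take the smallest unpaired half-edge and pair it with a uniformly chosen remaining unpaired half-edge. After $m=Nd/2$ steps we obtain a uniform pairing. Let $\mathcal F_k$ be the $\sigma$-algebra generated by the first $k$ pairs, and set $M_k=\E[f(\hat A_N)\mid\mathcal F_k]$.

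The key estimate is $|M_k-M_{k-1}|\le L$. Fix the first $k-1$ pairs and let $a$ be the next half-edge to be paired. For two possible partners $b\neq b'$, build a bijection between completions containing $\{a,b\}$ and completions containing $\{a,b'\}$. Given a completion containing $\{a,b\}$, $b'$ is matched to some $c$. Switch the pairs $\{a,b\},\{b',c\}$ into $\{a,b'\},\{b,c\}$. This is a single switching in the sense of \Cref{def:switching-pairing}, it preserves the first $k-1$ pairs, and it is a bijection between the two families of completions, which are uniform within each family. So the conditional expectations given $\{a,b\}$ and given $\{a,b'\}$ differ by at most $L$. Since $M_{k-1}$ is an average of these conditional values over the choice of partner, each difference $M_k-M_{k-1}$ is bounded by $L$.

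\textbf{Concentration for pairings.} \Cref{lem:azuma} now gives
\[
\P\bigl(|f(\hat A_N)-\E f(\hat A_N)|\ge u\bigr)\le 2\exp\bigl(-u^2/(2mL^2)\bigr)=2\exp\bigl(-u^2/(NdL^2)\bigr).
\]

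\textbf{Transfer to simple graphs.} This step needs care, because conditioning on $\mathcal S_N$ shifts the mean. By \eqref{eq:pairing-conditioned-simple},
\[
\P\bigl(|f(A_N)-\E f(\hat A_N)|\ge u\bigr)\le p_d^{-1}\,2\exp\bigl(-u^2/(NdL^2)\bigr).
\]
The centering must then be changed from $\E f(\hat A_N)$ to $\E f(A_N)$. Integrating the tail bound gives
\[
|\E f(A_N)-\E f(\hat A_N)|\le \E\,|f(A_N)-\E f(\hat A_N)|\le C'\sqrt{N}L,
\]
with $C'$ depending only on $d$.

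\textbf{Absorbing constants.} Two regimes remain.
\begin{enumerate}
\item If $u\ge 2C'\sqrt N L$, replace $u$ by $u/2$ in the bound above. This gives a bound $Cp_d^{-1}\exp(-u^2/(4NdL^2))$.
\item If $u<2C'\sqrt N L$, the right-hand side $2\exp(-u^2/(CNL^2))$ is at least $2e^{-4C'^2/C}\ge 1$ once $C$ is large, so the claim is trivial.
\end{enumerate}
In the first regime the prefactor $p_d^{-1}$ must also be absorbed. When $u^2\ge C''NL^2$, the bound $p_d^{-1}e^{-x}\le 2e^{-x/2}$ holds for $x\ge 2\ln(1/p_d)$. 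Below that threshold the claimed bound exceeds $1$ once $C$ is chosen large enough. Hence enlarging $C(d)$ yields $2\exp(-u^2/(CNL^2))$ in all cases.

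The main obstacle is this transfer step: removing the mean shift and the $p_d^{-1}$ factor introduced by conditioning on $\mathcal S_N$, using only the fact that the bound is vacuous for $u\lesssim\sqrt N L$. The switching-bijection bound on the martingale increments is standard.
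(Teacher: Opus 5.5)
Your proof is correct and follows the paper's argument essentially step for step. You use the same sequential-pairing Doob martingale, the same one-switching bijection to bound the increments by $L$, Azuma--Hoeffding, and the same transfer to simple graphs via $\P(\mathcal S_N)\ge p_d$ with a case split at $u\asymp\sqrt{N}L$. The only difference is that you bound the mean shift $|\E f(A_N)-\E f(\hat A_N)|$ by integrating the conditional tail, whereas the paper uses Cauchy--Schwarz with the martingale variance bound $\Var f(\hat A_N)\le mL^2$. Both give $O_d(\sqrt{N}L)$, and your explicit absorption of the $p_d^{-1}$ prefactor is slightly more careful than the paper's.
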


\begin{proof}
We first prove the corresponding estimate for the configuration-model adjacency matrix
$\hat A_N$.

Choose the random pairing sequentially as follows. Order the half-edges in $W$ lexicographically. At step $k$, let $a_k$ be the smallest
currently unmatched half-edge, and choose its partner $Y_k$ uniformly from the remaining
unmatched half-edges other than $a_k$. This produces a uniformly random pairing. Let
\[
\mathcal F_k=\sigma(Y_1,\dots,Y_k),
\qquad
M_k=\E\big[f(\hat A_N)\mid \mathcal F_k\big],
\qquad
k=0,1,\dots,m.
\]
Then $(M_k,\mathcal F_k)$ is the Doob martingale associated with $f(\hat A_N)$.

We claim that
\begin{equation}\label{eq:martingale-increment-bound}
|M_k-M_{k-1}|\le L
\qquad\text{a.s.\ for all }k\in[m].
\end{equation}
Fix $k$ and condition on $\mathcal F_{k-1}$. Let $U$ denote the set of unmatched half-edges at
time $k-1$, and write $a=a_k$. For each admissible choice $y\in U\setminus\{a\}$, define
\[
g(y)=\E\big[f(\hat A_N)\mid \mathcal F_{k-1},\,Y_k=y\big].
\]
We show that for any two admissible choices $y,y'\in U\setminus\{a\}$,
\begin{equation}\label{eq:g-diameter}
|g(y)-g(y')|\le L.
\end{equation}

If $y=y'$, there is nothing to prove. Assume henceforth that $y\neq y'$.
Since we condition on $\mathcal F_{k-1}$, the unmatched set $U$ and the half-edge $a=a_k$
are deterministic. Under the conditional law given $\mathcal F_{k-1}$ and $Y_k=y$, the remaining
pairs form a uniform random pairing of $U\setminus\{a,y\}$. Thus
\[
g(y)=\E_R\Big[f\big(A(\{\{a,y\}\}\cup R)\big)\Big],
\]
where $R$ is uniform over pairings of $U\setminus\{a,y\}$.
Likewise,
\[
g(y')=\E_{R'}\Big[f\big(A(\{\{a,y'\}\}\cup R')\big)\Big],
\]
where $R'$ is uniform over pairings of $U\setminus\{a,y'\}$.

Now let $R$ be a uniformly random pairing of $U\setminus\{a,y\}$. Let $x$ denote
the partner of $y'$ in $R$, and define $\Phi(R)$ to be the pairing of
$U\setminus\{a,y'\}$ obtained from $R$ by replacing the pair $\{y',x\}$ with $\{y,x\}$ and
leaving all other pairs unchanged. The map $R\mapsto \Phi(R)$ is plainly a bijection from the
set of pairings of $U\setminus\{a,y\}$ to the set of pairings of $U\setminus\{a,y'\}$.
Moreover, the two full pairings $
\{\{a,y\}\}\cup R$ and 
$\{\{a,y'\}\}\cup \Phi(R)$ 
differ by exactly one switching of two pairs. Therefore, by the assumed Lipschitz bound on
$f$,
\[
\Big|f\big(A(\{\{a,y\}\}\cup R)\big)-f\big(A(\{\{a,y'\}\}\cup \Phi(R))\big)\Big|\le L.
\]
Averaging over $R$ yields \eqref{eq:g-diameter}.

Since $M_k=g(Y_k)$ and $M_{k-1}=\E[g(Y_k)\mid \mathcal F_{k-1}]$, the latter being an average
of the values $g(y)$ over admissible $y$, \eqref{eq:g-diameter} implies
\eqref{eq:martingale-increment-bound}. Hence \Cref{lem:azuma} gives
\begin{equation}\label{eq:pairing-conc}
\P\Big(\big|f(\hat A_N)-\E[f(\hat A_N)]\big|\ge u\Big)
\le 2\exp\Big(-\frac{u^2}{NdL^2}\Big),
\end{equation}
since $m=Nd/2$. 
We also record the variance bound
\begin{equation}\label{eq:pairing-var}
\Var\big(f(\hat A_N)\big)
=
\sum_{k=1}^m \E\big[(M_k-M_{k-1})^2\big]
\le mL^2
\le \frac{NdL^2}{2}.
\end{equation}

We now pass from the pairing model to the uniform simple graph. Let
\[
\mu=\E[f(\hat A_N)],
\qquad
\mu_{\mathrm s}=\E\big[f(\hat A_N)\mid \mathcal S_N\big].
\]
Since
\[
\P(\mathcal S_N)\big(\mu_{\mathrm s}-\mu\big)
=
\E\Big[\big(f(\hat A_N)-\mu\big)\mathbf{1}_{\mathcal S_N}\Big],
\]
the Cauchy--Schwarz inequality,  \eqref{eq:simple-lower-bound}, and \eqref{eq:pairing-var} imply
\begin{equation}\label{eq:mean-shift}
|\mu_{\mathrm s}-\mu|
\le \frac{\sqrt{\Var\big(f(\hat A_N)\big)}}{\sqrt{\P(\mathcal S_N)}}
\le C_1(d)\sqrt{Nd}\,L
\end{equation}
for some constant $C_1(d)<\infty$.

Next, using \eqref{eq:pairing-conditioned-simple}, we have for every $u>0$,
\[
\P\Big(\big|f(A_N)-\E\big[f(A_N)\big]\big|\ge u\Big)
=
\P\Big(\big|f(\hat A_N)-\mu_{\mathrm s}\big|\ge u \,\Big|\, \mathcal S_N\Big).
\]
If $u>2C_1(d)\sqrt{Nd}\,L$, then by \eqref{eq:mean-shift},
\[
\Big\{\big|f(\hat A_N)-\mu_{\mathrm s}\big|\ge u\Big\}
\subset
\Big\{\big|f(\hat A_N)-\mu\big|\ge \frac{u}{2}\Big\},
\]
and therefore
\[
\P\Big(\big|f(A_N)-\E[f(A_N)]\big|\ge u\Big)
\le \frac{1}{\P(\mathcal S_N)}
\P\Big(\big|f(\hat A_N)-\mu\big|\ge \frac{u}{2}\Big).
\]
Using \eqref{eq:simple-lower-bound} and \eqref{eq:pairing-conc}, we obtain
\[
\P\Big(\big|f(A_N)-\E[f(A_N)]\big|\ge u\Big)
\le
2p_d^{-1}\exp\Big(-\frac{u^2}{4NdL^2}\Big).
\]
Since $p_d>0$ depends only on $d$, this is bounded by
\[
2\exp\Big(-\frac{u^2}{C_2(d)NdL^2}\Big)
\]
for some constant $C_2(d)<\infty$.

Finally, if $u\le 2C_1(d)\sqrt{Nd}\,L$, then the claimed bound is trivial after enlarging the
constant once more, since the right-hand side can be made at least $1$ on this range of $u$.
This proves the claim.
\end{proof}

\subsection{Conclusion}\label{s:gauss_conclusion}
We begin by deriving two preliminary estimates for the truncated function $\tilde F_{s,\delta}$. 
\begin{lemma}\label[lemma]{lem:conc-trunc}
Fix  $s\in(0,1)$ and $\delta\in(0,\eta^{-1}]$. There exists $C(d)>0$ such that for every $u>0$,
\begin{equation}\label{eq:conc-trunc}
\P\Big(\big|\tilde F_{s,\delta}(z)-\E \big[\tilde F_{s,\delta}(z)\big] \big|\ge u\Big)
\le 2\exp\Big(-\frac{N\eta^{4}}{8s^2}\delta^{2-2s}\,u^2\Big)
+2\exp\Big(-\frac{N\eta^{4}}{Ct^2s^2}\delta^{2-2s}\,u^2\Big).
\end{equation}
\end{lemma}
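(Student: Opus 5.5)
The plan is to split the fluctuation of $\tilde F_{s,\delta}(z)$ into a part coming from the Gaussian potential and a part coming from the graph. Write $\Phi(A) = \E[\tilde F_{s,\delta}(z) \mid A_N = A]$, where the expectation is over $v_N$ only. This is valid because $v_N$ is independent of $A_N$. Then
\[
\tilde F_{s,\delta} - \E\big[\tilde F_{s,\delta}\big] = \big(\tilde F_{s,\delta} - \Phi(A_N)\big) + \big(\Phi(A_N) - \E[\Phi(A_N)]\big).
\]
By a union bound, the event that the left side has absolute value at least $u$ is contained in the union of the events that one of the two brackets has absolute value at least $u/2$. Halving $u$ costs a factor of four in the exponents, and this will be absorbed into the constants. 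To match the stated constant $8$, I will apply the Gaussian bound with deviation $u/2$ and Lipschitz constant $L_1$, which gives $\exp(-u^2/(8L_1^2))$. This is exactly the stated first term once $L_1 = s\delta^{s-1}/(\eta^2\sqrt N)$.

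\textbf{Gaussian term.} Condition on $A_N$. By \Cref{lem:lipschitz-trunc}, the map $v \mapsto F_{s,\delta}(z,v,-tA_N)$ is Lipschitz with constant $L_1 = s\delta^{s-1}\eta^{-2}N^{-1/2}$, uniformly in $A_N$. \Cref{l:gaussian_concentration} then gives
\[
\P\big(|\tilde F_{s,\delta} - \Phi(A_N)| \ge u/2 \,\big|\, A_N\big) \le 2\exp\big(-u^2/(8L_1^2)\big) = 2\exp\big(-N\eta^4\delta^{2-2s}u^2/(8s^2)\big).
\]
Integrating over $A_N$ yields the first term.

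\textbf{Graph term.} By \Cref{lem:graph-lipschitz}(2), for each fixed $v$ a single switching changes $F_{s,\delta}(z,v,-tA)$ by at most $32s|t|\delta^{s-1}/(N\eta^2)$. Averaging over $v$ preserves this, so $\Phi$ satisfies the hypothesis of \Cref{lem:graph-concentration} with $L = 32 s t \delta^{s-1}/(N\eta^2)$. Note that $\Phi$ is defined on all pairing-model adjacency matrices, including non-simple ones, because \Cref{lem:graph-lipschitz} allows loops and multi-edges. \Cref{lem:graph-concentration} then gives
\[
\P\big(|\Phi(A_N) - \E\Phi(A_N)| \ge u/2\big) \le 2\exp\big(-u^2/(4CNL^2)\big) = 2\exp\big(-N\eta^4\delta^{2-2s}u^2/(4C\cdot 32^2 t^2 s^2)\big).
\]
After renaming $C(d)$, this is the second term.

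The only delicate point is measurability and definedness of $\Phi$ on non-simple pairings, so that \Cref{lem:graph-concentration} applies. This is handled by defining $\Phi$ through the same formula for every pairing matrix. Otherwise the argument is routine.
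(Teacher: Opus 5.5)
Your proposal is correct and follows the paper's proof essentially step for step. You condition on $A_N$ and apply Gaussian concentration at deviation $u/2$ with the Lipschitz constant from \Cref{lem:lipschitz-trunc}, which yields exactly the constant $8$. You then apply \Cref{lem:graph-concentration} to the Gaussian-averaged function, with the switching bound taken from \Cref{lem:graph-lipschitz}, and combine the two estimates by the triangle inequality. You also note that this averaged function must be defined on all pairing-model matrices, including non-simple ones; the paper leaves this implicit but handles it the same way.
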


\begin{proof}
Conditioning on $A_N$ and using \Cref{l:gaussian_concentration,lem:lipschitz-trunc}, we obtain
\[
\P\Big(\big|\tilde F_{s,\delta}(z)-\E \big[\tilde F_{s,\delta}(z)\mid A_N\big] \big|\ge u/2 \,\Big|\, A_N\Big)
\le 2\exp\Big(-\frac{N\eta^{4}}{8s^2}\delta^{2-2s}\,u^2\Big).
\]
Averaging over $A_N$ yields
\[
\P\Big(\big|\tilde F_{s,\delta}(z)-\E \big[\tilde F_{s,\delta}(z)\mid A_N\big] \big|\ge u/2 \Big)
\le 2\exp\Big(-\frac{N\eta^{4}}{8s^2}\delta^{2-2s}\,u^2\Big).
\]

Now define
\[
g_{s,\delta}(A)=\E_V\big[F_{s,\delta}(z,v_N,-tA)\big],
\]
where $\E_V$ denotes expectation in the Gaussian potential only. By \Cref{lem:graph-lipschitz}, $g_{s,\delta}$ is $L$-Lipschitz under one simple switching with
\[
L=\frac{32s|t|}{N\eta^2}\delta^{s-1}.
\]
Hence \Cref{lem:graph-concentration} implies
\[
\P\Big(\big|\E \big[\tilde F_{s,\delta}(z)\mid A_N\big]-\E \big[\tilde F_{s,\delta}(z)\big] \big|\ge u/2\Big)
\le
2\exp\Big(-\frac{N\eta^{4}}{Cdt^2s^2}\delta^{2-2s}\,u^2\Big).
\]
Combining the last two estimates by the triangle inequality yields \eqref{eq:conc-trunc}.
\end{proof}

\begin{lemma}\label[lemma]{lem:conc-untrunc}
Let $s\in(0,1)$ and $\delta\in(0,\eta^{-1}]$. Then for every $u>0$,
\[
\P\Big(\big|\tilde F_s(z)-\E \big[\tilde F_s(z)\big]\big|\ge u+2\delta^s\Big)
\le
\P\Big(\big|\tilde F_{s,\delta}(z)-\E \big[ \tilde F_{s,\delta}(z)\big] \big|\ge u\Big).
\]
\end{lemma}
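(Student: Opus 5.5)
The plan is to show that $\tilde F_s$ and $\tilde F_{s,\delta}$ are uniformly within $\delta^s$ of each other, deterministically, and then pass this through the triangle inequality. No probabilistic input is needed beyond the final inclusion of events.

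\emph{Pointwise comparison.} Write $x_i=\Im G_{ii}(z)\ge 0$ and $\phi_\delta(x)=(x\vee\delta)^s$. For each $i$ we have
\[
0\le \phi_\delta(x_i)-x_i^s\le \delta^s .
\]
If $x_i\ge\delta$, the difference is zero. If $x_i<\delta$, it equals $\delta^s-x_i^s$, which lies in $[0,\delta^s]$. Averaging over $i\in[N]$ gives the deterministic bound
\[
0\le \tilde F_{s,\delta}(z)-\tilde F_s(z)\le\delta^s .
\]
Taking expectations, which is legitimate because both quantities are bounded by $\max(\eta^{-s},\delta^s)$, yields
\[
0\le \E\big[\tilde F_{s,\delta}(z)\big]-\E\big[\tilde F_s(z)\big]\le \delta^s .
\]

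\emph{Triangle inequality.} Splitting
\[
\tilde F_s-\E\tilde F_s=\big(\tilde F_s-\tilde F_{s,\delta}\big)+\big(\tilde F_{s,\delta}-\E\tilde F_{s,\delta}\big)+\big(\E\tilde F_{s,\delta}-\E\tilde F_s\big),
\]
the first and third terms are each at most $\delta^s$ in absolute value. Therefore
\[
\big|\tilde F_s-\E\tilde F_s\big|\le \big|\tilde F_{s,\delta}-\E\tilde F_{s,\delta}\big|+2\delta^s .
\]
On the event $\{|\tilde F_s-\E\tilde F_s|\ge u+2\delta^s\}$ this forces $|\tilde F_{s,\delta}-\E\tilde F_{s,\delta}|\ge u$. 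So the first event is contained in the second, and monotonicity of probability gives the claim.

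I do not expect a real obstacle. The only points to check are nonnegativity of $\Im G_{ii}$, which holds since $z\in\bbH$, and integrability of the two quantities, which follows from the bound $\Im G_{ii}\le\eta^{-1}$. The hypothesis $\delta\le\eta^{-1}$ is not needed for this step; it matters only for the Lipschitz bounds used elsewhere.
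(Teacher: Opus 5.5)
Your proposal is correct and matches the paper's proof essentially step for step: the pointwise bound $0\le (x\vee\delta)^s-x^s\le\delta^s$, averaging and taking expectations, then the triangle inequality and event inclusion. Your side remark is also right: the hypothesis $\delta\le\eta^{-1}$ plays no role in this particular step.
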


\begin{proof}
For any $x\ge 0$, we have $(x\vee\delta)^s\ge x^s$, hence $F_{s,\delta}\ge F_s$.
Moreover, if $x\ge\delta$ then $(x\vee\delta)^s-x^s=0$, while if $x<\delta$ then
$(x\vee\delta)^s-x^s=\delta^s-x^s\le \delta^s$. Therefore for all $x\ge 0$,
\[
0\le (x\vee\delta)^s-x^s\le \delta^s.
\]
Averaging over the $S_{ii}$ entries yields $0\le \tilde F_{s,\delta}-\tilde F_s\le \delta^s$, and taking expectations gives 
\[
0\le \E \big[\tilde F_{s,\delta}(z)\big]-\E \big[\tilde F_s(z)\big]\le \delta^s.
\]

Finally, by the triangle inequality,
\[
\big|\tilde F_s-\E [\tilde F_s]\big|
\le \big |\tilde F_{s,\delta}-\E [\tilde F_{s,\delta}]\big| + |\tilde F_{s,\delta}-\tilde F_s| + \big|\E [\tilde F_{s,\delta}]-\E [\tilde F_s]\big|
\le \big|\tilde F_{s,\delta}-\E[ \tilde F_{s,\delta}]\big| + 2\delta^s.
\]
Hence the event $\{|\tilde F_s-\E [\tilde F_s]|\ge u+2\delta^s\}$ is contained in
$\{|\tilde F_{s,\delta}-\E [\tilde F_{s,\delta}]|\ge u\}$, proving the claim.
\end{proof}

We now prove \Cref{thm:conc-moments}.
\begin{proof}[Proof of \Cref{thm:conc-moments}]
We begin with the case $s\ge 1$. Conditioning on $A_N$ and using \Cref{l:gaussian_concentration,lem:lipschitz-sge1}, we obtain
\[
\P\Big(\big|\tilde F_s(z)-\E \big[\tilde F_s(z)\mid A_N\big] \big|\ge u/2\,\Big|\, A_N\Big)
\le 2\exp\Big(-\frac{N\eta^{2s+2}}{8s^2}u^2\Big).
\]
Averaging over $A_N$ yields
\[
\P\Big(\big|\tilde F_s(z)-\E \big[\tilde F_s(z)\mid A_N\big] \big|\ge u/2\Big)
\le 2\exp\Big(-\frac{N\eta^{2s+2}}{8s^2}u^2\Big).
\]

Now define
\[
g_s(A)=\E_V\big[F_s(z,v_N,-tA)\big].
\]
By \Cref{lem:graph-lipschitz}, $g_s$ is $L$-Lipschitz under one simple switching with
\[
L=\frac{32s|t|}{N\eta^{s+1}}.
\]
Hence \Cref{lem:graph-concentration} implies
\[
\P\Big(\big|\E \big[\tilde F_s(z)\mid A_N\big]-\E \big[\tilde F_s(z)\big] \big|\ge u/2\Big)
\le
2\exp\Big(-\frac{N\eta^{2s+2}}{Cdt^2s^2}u^2\Big).
\]
Combining the last two bounds by the triangle inequality proves \eqref{eq:conc-sge1}. The claim about the corresponding averages involving $\Re S_{ii}(z)$ is completely analogous.

Next, by \Cref{lem:conc-untrunc} and \Cref{lem:conc-trunc}, for any $s\in(0,1)$, $\delta\in(0,\eta^{-1}]$, and $u>0$,
\[
\P\Big(\big|\tilde F_s(z)-\E \tilde F_s(z)\big|\ge u+2\delta^s\Big)
\le
\P\Big(\big|\tilde F_{s,\delta}(z)-\E \tilde F_{s,\delta}(z)\big|\ge u\Big)
\]
\[
\le
2\exp\Big(-\frac{N\eta^{4}}{8s^2}\delta^{2-2s}\,u^2\Big)
+
2\exp\Big(-\frac{N\eta^{4}}{Ct^2s^2}\delta^{2-2s}\,u^2\Big),
\]
which is \eqref{eq:conc-untrunc-general}. 
\end{proof}

\section{Diagonal Resolvent Convergence}
\label{s:diagonal_convergence}

This section proves \Cref{thm:diag-resolvent-fixed-z}, which concerns convergence of the diagonal resolvent entries of $W_N$ to $R_{00}$. The two main technical ingredients of the proof are (i) a Combes--Thomas-type exponential decay bound for resolvent entries at fixed $\eta>0$ and (ii) the fact that a random $d$-regular graph is locally tree-like with high probability. Using these inputs, we prove \Cref{thm:diag-resolvent-fixed-z} by truncating the $d$-regular graph around a fixed vertex and then controlling the differences of the resolvent entries of the original and truncated graph by the Combes--Thomas bound. 

\subsection{Preliminary Bounds}\label{subsec:CT}

Let $\mathcal G=(\mathcal V, \mathcal E)$ be any (finite or infinite) graph with maximum degree at most $d$. Here $\mathcal V$ denotes the vertex set and $\mathcal E$ denotes the edge set. 
Let $A$ denote the adjacency operator on $\ell^2(\mathcal V)$, let $V$ also denote a real diagonal potential
(not necessarily bounded), fix $t>0$, and define $H=-tA+V$. Since this notation conflicts with the shorthand introduced in \Cref{s:bethelattice} for the Anderson model on the Bethe lattice, we will always use the more precise notation $H_{d,t} = - t A^{(d)}  + V^{(d)}$ when discussing the Bethe lattice in this section.

For all $X\subset \mathcal V$, write $\chi_X$ for the orthogonal projection  onto $\ell^2(X)$.
For all $X,Y\subset \mathcal V$ define the set-distance
\[
\dist(X,Y)=\inf\{\dist(x,y):x\in X,\ y\in Y\}\in\{0,1,2,\dots\}\cup\{\infty\}.
\]
We record the following straightforward observation, omitting the proof. 
\begin{lemma}\label[lemma]{lem:adj-norm}
If $\mathcal G$ is a graph with maximum degree at most $d$, then $\|A\|\le d$ on $\ell^2(\mathcal V)$.
\end{lemma}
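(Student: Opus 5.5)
The claim is that $\|A\|\le d$ on $\ell^2(\mathcal V)$ whenever every vertex of $\mathcal G$ has degree at most $d$. I would prove this with the Schur test, the same tool used in the proof of \Cref{lem:graph-lipschitz}; a direct Cauchy--Schwarz estimate gives the same result.

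First, $A$ is a well-defined operator on $\ell^2(\mathcal V)$ even for infinite $\mathcal G$. For $f\in\ell^2(\mathcal V)$, set $(Af)(x)=\sum_{y\sim x} f(y)$. Each such sum has at most $d$ terms, so it is well defined. The bound below then shows that $A$ maps $\ell^2(\mathcal V)$ into itself.

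The norm bound comes from a pointwise estimate followed by summation. By Cauchy--Schwarz over the at most $d$ neighbors of $x$,
\[
|(Af)(x)|^2 \le \deg(x)\sum_{y\sim x}|f(y)|^2 \le d\sum_{y\sim x}|f(y)|^2 .
\]
Summing over $x\in\mathcal V$ and exchanging the order of summation (Tonelli, since all terms are nonnegative) gives
\[
\|Af\|_2^2 \le d\sum_{y}|f(y)|^2\,\#\{x: x\sim y\} \le d^2\|f\|_2^2 .
\]
Hence $\|A\|\le d$.

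If the graph may carry loops or multiple edges, the same argument applies with edge multiplicities as weights. Each row sum of the entries $|A_{xy}|$ is still at most $d$, and the Schur test with row and column sums at most $d$ gives the identical bound. No step here is a real obstacle. The only point needing care is the interchange of sums for infinite graphs, and Tonelli's theorem handles it.
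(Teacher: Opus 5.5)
Your argument is correct: the pointwise Cauchy--Schwarz bound followed by Tonelli (equivalently, the Schur test with row and column sums at most $d$) gives $\|A\|\le d$, including for infinite graphs and for multigraphs with loops counted with weight $2$ as in the configuration model. The paper omits this proof as straightforward; your argument is the standard one, and it matches the paper's own use of the Schur test in the proof of \Cref{lem:graph-lipschitz}.
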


The following Combes--Thomas bound is a key technical input for this section. Its proof is given in \Cref{app:CT-rigorous}.

\begin{lemma}\label[lemma]{lem:CT}
Let $\mathcal G$ be a graph with maximum degree at most $d$. 
Fix $z=E+\iu\eta \in \bbH$ and set
\[
\mu_{\mathrm{CT}}=\log\Big(1+\frac{\eta}{4td}\Big).
\]
Then for any subsets $X,Y\subset \mathcal V$,
\[
\big\|\chi_X(H-z)^{-1}\chi_Y\big\|
\le \frac{2}{\eta} \exp \big( -\mu_{\mathrm{CT}}\dist(X,Y) \big).
\]
\end{lemma}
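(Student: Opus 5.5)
We will prove the bound by the standard Combes--Thomas conjugation with exponential weights. Fix $\mu>0$ (eventually $\mu=\mu_{\mathrm{CT}}$) and first treat the case where $X$ is finite and $\dist(X,Y)<\infty$; the case $\dist(X,Y)=\infty$ is handled by a limiting argument, or by noting that $\mathcal X$ then has no path to $Y$. Define the weight $\rho(v)=\min\{\dist(v,X), n\}$ for a large cutoff $n$, which makes the multiplication operator $e^{\mu\rho}$ bounded with bounded inverse. Consider the conjugated operator
\[
H_\mu = e^{\mu\rho}\,H\,e^{-\mu\rho} = -t\,e^{\mu\rho}Ae^{-\mu\rho} + V,
\]
since $V$ is diagonal and commutes with $e^{\mu\rho}$. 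Then $H_\mu - H = -tB$, where $B_{vw}=A_{vw}\bigl(e^{\mu(\rho(v)-\rho(w))}-1\bigr)$. Because $\rho$ is $1$-Lipschitz on the graph, each nonzero entry satisfies $|B_{vw}|\le e^{\mu}-1$. Each row and column has at most $d$ nonzero entries (counting loops and multiedges with multiplicity, so that row sums of $|A|$ are at most $d$). The Schur test then gives $\|B\|\le d(e^\mu-1)$.

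Next, invert $H_\mu-z$ by perturbation. Since $H$ is self-adjoint, $\|(H-z)^{-1}\|\le\eta^{-1}$. On the domain of $V$ we write $H_\mu - z = (H-z)\bigl(1 - t(H-z)^{-1}B\bigr)$. With $\mu=\mu_{\mathrm{CT}}$ we have $e^{\mu}-1=\eta/(4td)$, so $t\|B\|\,\|(H-z)^{-1}\|\le 1/4$. A Neumann series then shows that $H_\mu-z$ is invertible with $\|(H_\mu-z)^{-1}\|\le \frac{4}{3\eta}\le \frac{2}{\eta}$. This is the one point where we must be careful about unboundedness. The conjugation by the bounded invertible $e^{\mu\rho}$ preserves $\mathrm{Dom}(V)$, so the identity $(H_\mu-z)^{-1}=e^{\mu\rho}(H-z)^{-1}e^{-\mu\rho}$ holds as bounded operators. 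This is the step I expect to need the most care, although with the cutoff $n$ it is routine.

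Finally, write
\[
\chi_X(H-z)^{-1}\chi_Y=\chi_X e^{-\mu\rho}(H_\mu-z)^{-1}e^{\mu\rho}\chi_Y .
\]
On $X$ we have $\rho=0$, so $\chi_X e^{-\mu\rho}=\chi_X$. On $Y$ we have $\rho\ge \min\{\dist(X,Y),n\}$; we want the weight to be large on $Y$, so it is better to conjugate the other way. Replace $\rho$ by $-\rho$, or equivalently set $\rho(v)=\min\{\dist(v,X),n\}$ and use $H_{-\mu}$; the bound $\|B\|\le d(e^\mu-1)$ is symmetric in the sign of the conjugation. Then $\chi_X e^{\mu\rho}=\chi_X$ and $\|e^{-\mu\rho}\chi_Y\|\le e^{-\mu\min\{\dist(X,Y),n\}}$. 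Combining these gives
\[
\|\chi_X(H-z)^{-1}\chi_Y\|\le \frac{2}{\eta}e^{-\mu_{\mathrm{CT}}\min\{\dist(X,Y),n\}} .
\]
Letting $n\to\infty$, or simply taking $n\ge\dist(X,Y)$, yields the claim, including the case $\dist(X,Y)=\infty$ since $n$ is arbitrary. No finiteness of $X$ is needed, because $\rho$ is well defined for any nonempty $X$; if $X$ or $Y$ is empty the statement is trivial.
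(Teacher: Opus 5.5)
Your proof is correct, and it handles the unboundedness differently from the paper. The paper conjugates by the untruncated weight $e^{\mu\,\dist(x,X)}$. This forces it to work first on the finite volume $\Lambda_r$, which needs $X$ finite. It then passes to the infinite graph through strong resolvent convergence of the decoupled operators $\widehat H_r$ (via a core argument) and a finite-rank sandwiching lemma. Finally it exhausts infinite $X,Y$ by finite sets.

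You instead cap the weight at height $n$. The capped weight $\min\{\dist(\cdot,X),n\}$ is still $1$-Lipschitz along edges, so the Schur bound $\|B\|\le d(e^{\mu}-1)$ and the Neumann-series bound $4/(3\eta)$ on the resolvent of the conjugated operator are uniform in $n$. At the same time, the conjugating multiplication operator is bounded with bounded inverse and commutes with $V$. It therefore preserves $\mathrm{Dom}(V)$, and the identity $(H-z)^{-1}=e^{\mu\rho}(H_{-\mu}-z)^{-1}e^{-\mu\rho}$ is legitimate directly on the infinite graph. Choosing $n\ge \dist(X,Y)$, or letting $n\to\infty$ when the distance is infinite, finishes the argument.

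What your route buys is a shorter proof that treats arbitrary $X$, $Y$ and the disconnected case in one stroke. It also makes the appendix's strong-resolvent-convergence and finite-rank lemmas unnecessary. The paper's route buys only that every conjugation is done with finite matrices.

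In a write-up, use the correct direction of conjugation from the start: your $H_{-\mu}=e^{-\mu\rho}He^{\mu\rho}$, which is the paper's $W^{-1}HW$. Also drop the opening restriction to finite $X$ and finite distance, since your final argument does not use it.
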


\subsection{Decoupling and Local Tree-Likeness}\label{subsec:trunc-tree}

We next define a decoupling of the Hamiltonian and quantify its effect on the root resolvent.
Let $\mathcal G=(\mathcal V, \mathcal E)$ be as above and fix a vertex $o\in \mathcal V$. For every $r\in\N$ define the ball
\[
B_r(o)=\{x\in \mathcal V:\dist(o,x)\le r\}.
\]
Define the \emph{decoupling at radius $r$} of the Anderson model on $\mathcal G$ by considering the Anderson model on the graph formed by 
removing all edges between $B_r(o)$ and its complement $B_r(o)^c$, yielding an operator
\[
H^{(r)}=H_{B_r(o)}\oplus H_{B_r(o)^c}
\]
on $\ell^2(\mathcal V)$.
For this section only, we always use $G_N$ when referring to the resolvent of $W_N$; we let $G(z)=(H-z)^{-1}$ and $G^{(r)}(z)=(H^{(r)}-z)^{-1}$ and write $G_{oo}(z)=\langle\delta_o,G(z)\delta_o\rangle$,
and similarly for $G^{(r)}_{oo}(z)$. 

\begin{lemma}\label[lemma]{lem:decouple-stability}
Let $\mathcal G$ be a graph with maximum degree at most $d$. 
With $\mu_{\mathrm{CT}}$ as in \Cref{lem:CT}, for all $r\in\N$,
\[
\big|G_{oo}(z)-G^{(r)}_{oo}(z)\big|\le \frac{4td}{\eta^2}\exp(-2\mu_{\mathrm{CT}}r).
\]
\end{lemma}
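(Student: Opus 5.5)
The plan is to use the resolvent identity together with the Combes--Thomas bound of \Cref{lem:CT}. Write $H = H^{(r)} + \Gamma$, where $\Gamma = -tA_{\partial}$ and $A_\partial$ is the adjacency operator of the bipartite set of edges joining $B_r(o)$ to $B_r(o)^c$. Then
\[
G - G^{(r)} = -G^{(r)}\,\Gamma\, G ,
\]
so that
\[
G_{oo}-G^{(r)}_{oo} = t\,\langle \delta_o, G^{(r)} A_\partial G\,\delta_o\rangle .
\]

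The next step is to localize the boundary operator. Every cut edge joins a vertex of $S_r = \{x : \dist(o,x)=r\}$ to a vertex of $S_{r+1}$. Hence
\[
A_\partial = \chi_{S_r}A_\partial\chi_{S_{r+1}} + \chi_{S_{r+1}}A_\partial\chi_{S_r}.
\]
Moreover, $H^{(r)}$ is block diagonal with respect to $\ell^2(B_r(o))\oplus\ell^2(B_r(o)^c)$. Since $\delta_o\in\ell^2(B_r(o))$, we get $\chi_{S_{r+1}} G^{(r)}\delta_o = 0$. Only the first term survives, which gives
\[
|G_{oo}-G^{(r)}_{oo}| \le t\,\|\chi_{\{o\}}G^{(r)}\chi_{S_r}\|\,\|A_\partial\|\,\|\chi_{S_{r+1}}G\chi_{\{o\}}\| .
\]
By \Cref{lem:adj-norm}, $\|A_\partial\|\le d$, because $A_\partial$ is the adjacency operator of a subgraph of maximum degree at most $d$.

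Now apply \Cref{lem:CT} twice. For $H$ on $\mathcal G$, the distance from $\{o\}$ to $S_{r+1}$ is $r+1$, so $\|\chi_{S_{r+1}}G\chi_{\{o\}}\|\le \frac{2}{\eta}e^{-\mu_{\mathrm{CT}}(r+1)}$. For $H^{(r)}$, which is itself an Anderson Hamiltonian on the graph with the cut edges removed (still of maximum degree at most $d$), the distance from $o$ to $S_r$ is $r$. This holds because a shortest path from $o$ to a vertex of $S_r$ stays in $B_r(o)$ and uses no cut edge. Therefore $\|\chi_{\{o\}}G^{(r)}\chi_{S_r}\|\le\frac{2}{\eta}e^{-\mu_{\mathrm{CT}} r}$. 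Multiplying the three bounds gives $\frac{4td}{\eta^2}e^{-\mu_{\mathrm{CT}}(2r+1)}$, which is at most the claimed bound $\frac{4td}{\eta^2}e^{-2\mu_{\mathrm{CT}} r}$.

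The main point of care is the unbounded potential, which requires justifying the resolvent identity. Both $H$ and $H^{(r)}$ are self-adjoint on $\mathrm{Dom}(V)$ because $tA$ and $\Gamma$ are bounded perturbations of $V$. So $G - G^{(r)} = G^{(r)}(H^{(r)}-H)G$ holds as bounded operators, with $G$ mapping into the common domain. The rest is bookkeeping of the distances.
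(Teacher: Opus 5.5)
Your proof is correct and follows the same route as the paper: the second resolvent identity with the cut operator of norm at most $td$, placed between two Combes--Thomas bounds, one for $H$ and one for the decoupled $H^{(r)}$. The only difference is cosmetic. You split the boundary into $S_r$ and $S_{r+1}$ and use block-diagonality of $G^{(r)}$ to kill one term, which gains an extra factor $e^{-\mu_{\mathrm{CT}}}$, whereas the paper simply uses $K=\chi_{\partial B_r(o)}K\chi_{\partial B_r(o)}$ with distance $r$ on both sides.
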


\begin{proof}
Let $K=H-H^{(r)}$. Then $K$ is supported on edges crossing the cut between $B_r(o)$ and $B_r(o)^c$, and it equals $-t$ times the adjacency matrix of the graph formed by the cut edges. Hence, by \Cref{lem:adj-norm},
$\|K\|\le t\|A\|\le td$.
It follows from the definitions of $G$ and $G^{(r)}$ that 
\[
G-G^{(r)}=-GKG^{(r)}.
\]
Let $\partial B_r(o)$ denote the set of vertices in $\mathcal V$ incident to at least one cut edge (both inside and outside $B_r(o)$). 
Then $\dist(\{o\},\partial B_r(o))=r$ and $K=\chi_{\partial B_r(o)}K\chi_{\partial B_r(o)}$.
Therefore
\[
|G_{oo}-G^{(r)}_{oo}|
=\big |\langle\delta_o,GKG^{(r)}\delta_o\rangle\big|
\le \|\chi_{\{o\}}G\chi_{\partial B_r(o)}\|\,\|K\|\,
\|\chi_{\partial B_r(o)}G^{(r)}\chi_{\{o\}}\|.
\]
By \Cref{lem:CT} applied to both $H$ and $H^{(r)}$, the first and last factors on the right-hand side are bounded by $\frac{2}{\eta}\exp( -\mu_{\mathrm{CT}}r)$.
Hence
\[
|G_{oo}-G^{(r)}_{oo}|
\le \Big(\frac{2}{\eta}e^{-\mu_{\mathrm{CT}}r}\Big)^2\cdot td
= \frac{4td}{\eta^2}e^{-2\mu_{\mathrm{CT}}r},
\]
as desired.
\end{proof}

We now use local tree-likeness of random $d$-regular graphs to compare finite neighborhoods with the tree.
For a rooted graph $(\mathcal G,o)$ we write $B_r(\mathcal G,o)$ for the rooted radius-$r$ ball.

\begin{lemma}\label[lemma]{lem:local-tree}
Fix $r\in\N$ and let $\mathcal G_N$ be a uniformly random simple $d$-regular graph on $N$ vertices labeled $1$ to $N$.
Then there exists a constant $C(d)>0$ such that for all $N \ge C$, 
\[
\mathbb{P}\big(B_r(\mathcal G_N,1)\text{ contains a cycle}\big)\le \frac{C (d-1)^{2r}}{N}.
\]
\end{lemma}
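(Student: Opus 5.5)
We will prove the bound in the configuration (pairing) model and then transfer it to the uniform simple graph via \eqref{eq:pairing-conditioned-simple} and \eqref{eq:simple-lower-bound}. For any event $\mathcal B$ measurable with respect to the pairing, we have $\P(\hat A_N \in \mathcal B \mid \mathcal S_N) \le p_d^{-1}\P(\hat A_N\in\mathcal B)$. It therefore suffices to show that in the pairing model, the ball of radius $r$ around vertex $1$ contains a cycle with probability at most $C(d-1)^{2r}/N$. In the pairing model, loops and multiple edges also count as cycles.

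To do this, we explore the ball by breadth-first search using the sequential pairing procedure from the proof of \Cref{lem:graph-concentration}. The only change is that at each step we pick the next unmatched half-edge attached to an already-discovered vertex of depth less than $r$, rather than the lexicographically smallest one. This still produces a uniform pairing. Exposing the ball $B_r$ requires revealing at most $K := d\sum_{j=0}^{r-1}(d-1)^j \le 2d(d-1)^{r-1}$ pairs. Each such pair matches an active half-edge with a uniformly chosen remaining unmatched half-edge.

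A cycle appears in $B_r(\mathcal G,1)$ only if, at some step, the chosen partner is a half-edge belonging to an already-discovered vertex. This includes half-edges at the current vertex, which would give a loop. At step $k$, the number of such "bad" half-edges is at most $d\cdot(\text{number of discovered vertices}) \le d(1+K)$. The number of available unmatched half-edges is at least $Nd - 2K$.

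Hence, for $N \ge C(d)$ and in the regime $(d-1)^{2r}\le N$ (otherwise the bound is trivial after enlarging $C$), each step is bad with probability at most $d(1+K)/(Nd-2K) \le 2(1+K)/N$. A union bound over the at most $K$ steps gives a total probability of at most $2K(1+K)/N \le C(d)(d-1)^{2r}/N$.

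The main point requiring care is the justification of this exploration process. One must check that the adaptive choice of which half-edge to pair next keeps each partner uniform given the past, which holds because the pairing is uniform conditional on any partial matching. One must also check that every cycle in the ball is detected at the moment of some bad match. This holds because a tree exploration in which every new partner lies at a fresh vertex produces a tree.
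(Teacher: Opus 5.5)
Your overall route matches the paper's: breadth-first pairing in the configuration model, a union bound over $O((d-1)^r)$ steps on the chance of hitting a half-edge at an already discovered vertex, and transfer to the simple graph via $\P(\mathcal S_N)\ge p_d$. However, one step fails as written.

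Your exploration only pairs half-edges attached to vertices of depth less than $r$. So it never reveals the partners of the leftover half-edges at depth-$r$ vertices. Consider an edge joining two depth-$r$ vertices, or a loop or double edge at depth $r$. Such an edge lies in $B_r(\mathcal G,1)$, understood as the subgraph induced on the vertices at distance at most $r$ from $1$, and closes a cycle of length at most $2r+1$ there. Yet your procedure can finish with no bad match at all. Your justification, that an exploration in which every partner is fresh produces a tree, only shows that the explored subgraph (the edges incident to vertices of depth $<r$) is a tree. It does not show that the induced ball is a tree.

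The induced ball is the notion actually needed later. The decoupled operator $H_{B_r(o)}$ keeps every edge with both endpoints in $B_r(o)$. \Cref{lem:trunc-id} needs this operator to agree with the one on the tree ball, and the tree ball has no edges between depth-$r$ vertices.

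The repair is immediate: also pair the remaining half-edges at depth-$r$ vertices, i.e.\ run your argument with $r+1$ in place of $r$. This is what the paper's count does: it allows up to $M\le d|B_r(1)|$ steps, where $M$ is the number of half-edges in $B_r(1)$, including those at depth $r$. After this change, every edge with an endpoint in $B_r$ is revealed, so a run with no bad match certifies that the induced ball is a tree. Any fresh depth-$(r+1)$ vertices discovered along the way can simply be counted as bad targets, which only overcounts. Your $K$ and the number of discovered vertices grow by a factor of order $d-1$. The resulting bound $C(d)(d-1)^{2r+2}/N$ is absorbed into $C(d)$.
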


\begin{proof}
We briefly sketch the standard configuration-model argument.

Let $G^{\mathrm{conf}}_{N,d}$ be the random $d$-regular multigraph from the configuration model (which places $d$ half-edges on each of the $N$ vertices, then forms a perfect matching of the half-edges uniformly at random), and let $\mathbb Q$ be the induced measure on graph events.
We let $C(d)>0$ denote a $d$-dependent constant that may increase at each appearance in this proof.
For fixed $d$, we have $\mathbb{Q}(G^{\mathrm{conf}}_{N,d}\text{is simple})> C^{-1}$  for all $N \ge C$ by \eqref{eq:simple-lower-bound}. 
Therefore, by the definition of conditional probability, we have   for any event $\mathcal{E}$ measurable with respect to the graph and $N \ge C$ that 
\begin{equation}\label{e:transfer}
\mathbb{P} (\mathcal{E}) \le C  \mathbb{Q}(\mathcal{E}).
\end{equation}

We can construct $B_r(1)$ by breadth-first search in the associated configuration model, pairing one half-edge at each step. Let $|B_r(1)|$ denote the number of vertices in $B_r(1)$, and let  $M$ be the total number of half-edges  in $B_r(1)$. 
Deterministically,
\[
\big |B_r(1)\big |\le 1+d\sum_{k=0}^{r-1}(d-1)^k \le C (d-1)^r,\]
which implies
\[
M\le d \big |B_r(1)\big|\le C (d-1)^r,
\]
where we increase $C$ at each step if necessary. 
During the breadth-first pairing procedure, at any step, the probability that a newly exposed half-edge pairs
with one of the at most $M$ half-edges attached to already discovered vertices is bounded by  $C M/ Nd $.
A cycle in $B_r(1)$ can occur only if some such collision happens. By a union bound over the at most $M$ steps,
\[
\mathbb{Q}\big(B_r(1)\text{ contains a cycle in }G^{\mathrm{conf}}_{N,d}\big)
\le M\cdot \frac{C M}{Nd }
\le \frac{C (d-1)^{2r}}{N}.
\]
This estimate then transfers to the uniform simple $d$-regular model using \eqref{e:transfer}.
\end{proof}

\subsection{Proof of Diagonal Resolvent Convergence}\label{subsec:diag-proof}

We now construct the coupling between $d$-regular graph potentials and tree potentials used in the diagonal resolvent comparison.
Sample the tree potential $(V_u)_{u\in \bbV}$ as i.i.d.\ $N(0,1)$ random variables. 
Independently sample the graphs $(\mathcal G_N)_{N\ge 1}$, each uniform over simple $d$-regular graphs on $[N]$.
Let $r_N = \lfloor \log \log N \rfloor$, and define the event 
\[
\mathcal{T}_N=\{B_{r_N}(\mathcal G_N,1)\text{ is a tree}\}.
\]
Recall the shorthand $\bbT = \bbT_d$. On $\mathcal{T}_N$, choose a rooted graph isomorphism
\[
\varphi_N\colon B_{r_N}(\bbT,0)\to B_{r_N}(\mathcal G_N,1).
\]
For concreteness, one may select the isomorphism induced by the breadth-first exploration of the half-edges in the configuration model construction of  $\mathcal G_N$. We define a potential $V^{(N)}$  on  $\mathcal G_N$ in two parts. In the ball, we set
\[
V^{(N)}_x=V_{\varphi_N^{-1}(x)}, \qquad x\in B_{r_N}(\mathcal G_N,1).
\]
Outside the ball, meaning for indices $x\notin B_{r_N}(\mathcal G_N,1)$, we sample the $V^{(N)}_x$ variables as i.i.d.\ $N(0,1)$, which are additionally independent of all other variables in this construction.
On the complement of $\mathcal T_N$, we sample all $V^{(N)}_x$ variables as i.i.d.\ $N(0,1)$, independently from the tree potential.

Let
\[
H_N=-tA_N  + V^{(N)}
\]
and note that $H_N$ has the same distribution as $W_N$. Hence, we will identify $H_N$ with $W_N$ for the remainder of this section.

Let $H_N^{(r)}$ be the decoupling of $H_N$ at radius $r$ around the root $1\in[N]$,
and let $G_N^{(r)}(z)=(H_N^{(r)}-z)^{-1}$.
Define $H^{(r)}$ and $R^{(r)}(z)$ similarly on $\bbT$ (decoupling at radius $r$ around the root $0$). The following lemma is an immediate consequence of these definitions.

\begin{lemma}\label[lemma]{lem:trunc-id}
On the event $\mathcal{T}_N$ we have
\[
G^{(r_N)}_{N,11}(z)=R^{(r_N)}_{00}(z)
\]
under the above coupling.
\end{lemma}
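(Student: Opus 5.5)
The plan is to unwind the definitions of the coupling and of the decoupled operators, and observe that on $\mathcal T_N$ both decoupled resolvent entries are determined by the same finite self-adjoint matrix. By construction, $H_N^{(r_N)} = H_{N,B_{r_N}(1)} \oplus H_{N,B_{r_N}(1)^c}$ is block diagonal with respect to $\ell^2(B_{r_N}(1)) \oplus \ell^2(B_{r_N}(1)^c)$. Hence its resolvent is also block diagonal, and
\[
G^{(r_N)}_{N,11}(z) = \big\langle \delta_1, (H_{N,B_{r_N}(1)} - z)^{-1} \delta_1 \big\rangle .
\]
Likewise, $H^{(r_N)}$ on $\bbT$ is a direct sum of the restriction to the finite ball $B_{r_N}(\bbT,0)$ and the (unbounded, self-adjoint) restriction to the complement. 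So
\[
R^{(r_N)}_{00}(z) = \big\langle \delta_0, (H_{B_{r_N}(0)} - z)^{-1}\delta_0 \big\rangle .
\]
One should check that the direct sum of self-adjoint operators has resolvent equal to the direct sum of resolvents. This is standard, and it holds for the unbounded outer block as well, since the decoupling only removes finitely many bounded hopping terms.

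Next, identify the two finite blocks. On $\mathcal T_N$, the rooted isomorphism $\varphi_N$ induces a unitary $U\colon \ell^2(B_{r_N}(\bbT,0)) \to \ell^2(B_{r_N}(\mathcal G_N,1))$ with $U\delta_v = \delta_{\varphi_N(v)}$. Because $\varphi_N$ is a graph isomorphism of the balls, $U$ conjugates the restricted adjacency matrices: the restriction of $A_N$ to the ball has entry $1$ between $\varphi_N(v)$ and $\varphi_N(w)$ exactly when $v\sim w$ in $\bbT$. On $\mathcal T_N$ the ball is a tree, hence simple, so there are no loops or multi-edges to worry about. Since $V^{(N)}_{\varphi_N(v)} = V_v$ by definition of the coupling, $U$ also conjugates the potentials. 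Therefore
\[
U H_{B_{r_N}(0)} U^{*} = H_{N,B_{r_N}(1)},
\]
and the same relation holds for the resolvents at $z$. Taking the $(\delta_0,\delta_0)$ entry and using $U\delta_0 = \delta_1$ (the isomorphism is rooted) gives the claim.

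The main point requiring care is the first step for the tree: confirming that the decoupled operator on $\bbT$ is genuinely block diagonal, with the ball block equal to the finite matrix $-tA|_{B} + V|_{B}$, so that no contribution from outside the ball reaches the $(0,0)$ entry. Everything else is bookkeeping from the definitions.
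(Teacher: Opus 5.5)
Your proposal is correct and follows the same route as the paper. Both decoupled operators are block diagonal, so the root entry sees only the inside-ball block, and on $\mathcal T_N$ the rooted isomorphism $\varphi_N$ together with the matched potentials makes those blocks unitarily equivalent with $0\mapsto 1$; you simply make explicit the conjugation $U H_{B_{r_N}(0)} U^{*} = H_{N,B_{r_N}(1)}$ that the paper leaves implicit.
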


\begin{proof}
On $\mathcal{T}_N$, the rooted balls $B_{r_N}(\mathcal G_N,1)$ and $B_{r_N}(\bbT,0)$ are isomorphic via $\varphi_N$,
and by construction the potentials match under this isomorphism.
The decoupled operators $H_N^{(r_N)}$ and $H^{(r_N)}$ are direct sums of the inside-ball operator and
the outside-ball operator. The diagonal resolvent entry at the root depends only on the inside-ball part.
Since the inside-ball operators are identical under the isomorphism, the diagonal resolvent entries coincide.
\end{proof}
We can now prove \Cref{thm:diag-resolvent-fixed-z}.
\begin{proof}[Proof of \Cref{thm:diag-resolvent-fixed-z}]
By \Cref{lem:decouple-stability} applied to $H_N$ (root $1$) and to $H_{d,t}$ (root $0$) with $r=r_N$, we obtain 
\[
\big|G_{N,11}(z)-G^{(r_N)}_{N,11}(z)\big|\le \frac{4td}{\eta^2}e^{-2\mu_{\mathrm{CT}}r_N},
\qquad
\big|R_{00}(z)-R^{(r_N)}_{00}(z)\big|\le \frac{4td}{\eta^2}e^{-2\mu_{\mathrm{CT}}r_N}.
\]
On the event $\mathcal{T}_N$, \Cref{lem:trunc-id} gives $G^{(r_N)}_{N,11}(z)=R^{(r_N)}_{00}(z)$, hence
\[
\big|G_{N,11}(z)-R_{00}(z)\big|
\le \big|G_{N,11}-G^{(r_N)}_{N,11}\big| + \big|G^{(r_N)}_{N,11}-R^{(r_N)}_{00}\big| + \big|R^{(r_N)}_{00}-R_{00}\big|
\le \frac{8td}{\eta^2}e^{-2\mu_{\mathrm{CT}}r_N}.
\]
Since $r_N\to\infty$, the deterministic bound on the right tends to $0$. Let $\epsilon >0$ be a parameter. Then there exists $C_0(\eps)>0$ such that for $N > C_0$,

\[
\mathbb{P}\Big(\big|G_{N,11}(z)-R_{00}(z)\big|>\varepsilon\Big)\le \mathbb{P}(\mathcal{T}_N^c).
\]
Finally, by \Cref{lem:local-tree}, 
\[
\lim_{N\rightarrow \infty} \mathbb{P}(\mathcal{T}_N^c) = 0 .
\]
This proves $G_{N,11}(z)\to R_{00}(z)$ in probability under the coupling.
\end{proof}

\section{Bounds for the Eigenvalue Counting Function} \label{sect: Global Law}

This section is dedicated to the proof of \Cref{l:spectral_density_bounds}. The main input is the
convergence of the Stieltjes transform of $W_N$ to the Stieltjes transform of the density of states
measure of $H_{d,t}$ on the Bethe lattice. In the Gaussian setting, this follows directly from the
diagonal resolvent convergence proved in \Cref{thm:diag-resolvent-fixed-z}, together with the
$s=1$ case of \Cref{thm:conc-moments}.
Finally, since convergence of Stieltjes transforms implies convergence of the associated spectral measures, the desired lower bound on the eigenvalue counting function follows from a lower bound on the integrated density of states of $H_{d,t}$, which we show in \Cref{app:DOS-bounds}, and the upper bound follows from Wegner's estimate.

\subsection{Notation}
We define 
\[
m_N(z) = \frac{1}{N} \tr G_N(z),
\]
which is the Stieltjes transform of the empirical eigenvalue distribution of $W_N$. 

Next, because $H_{d,t}$ is self-adjoint, the spectral theorem implies the existence of a random projection-valued measure $\mu$ such that $H_{d,t} = \int \lambda \, d\mu (\lambda)$. 
The spectral measure at the root is given by  
\[
\mu_0(S) = \langle \delta_0, \mu(S) \delta_0 \rangle 
\]
for all Borel sets $S \subset \R$, and the density of states measure $\nu$ is given by 
\[
\nu(S) = \E\big[ \mu_0(S)  \big].
\]
For all $z\in \bbH$, the Stieltjes transform of $\nu$ is
\[
m_{\bbT}(z) = \int_\R \frac{1}{\lambda  -z } \, d\nu(\lambda).
\]

\subsection{Preliminaries}\label{subsec:geisinger-prelim}

We first state an upper bound  on the integrated density of states of $H_{d,t}$, which is a consequence of a well-known estimate of Wegner \cite[(5.27)]{aizenman2013resonant}. 

\begin{lemma}\label[lemma]{l:wegner}
For every $d \ge 3$ and $t>0$, there exists a constant $C(d,t) > 0$ such that for all intervals $I \subset \R$, we have 
$\nu (I) \le C | I |$.
\end{lemma}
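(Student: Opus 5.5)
The statement is Wegner's estimate for the density of states $\nu$. I would prove it directly from the definition $\nu(I)=\E[\mu_0(I)]$ by averaging over the potential at the root only. Conditionally on all other potentials $(g_v)_{v\neq 0}$, the operator $H_{d,t}$ is a rank-one perturbation in the direction $\delta_0$:
\[
H_{d,t} = H' + g_0 \langle \delta_0,\cdot\rangle \delta_0 ,
\]
where $H'$ is $H_{d,t}$ with $g_0$ set to zero.

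\textbf{Step 1 (resolvent formula).} The rank-one resolvent formula gives
\[
R_{00}(z) = \frac{1}{g_0 - \Sigma(z)}, \qquad \Sigma(z) = -\frac{1}{\langle \delta_0,(H'-z)^{-1}\delta_0\rangle}.
\]
Here $\Sigma(z)\in -\bbH$ is independent of $g_0$, and the map $z\mapsto -\Sigma(z)$ is Herglotz. Hence
\[
\Im R_{00}(E+\iu\eta) = \frac{-\Im\Sigma}{(g_0-\Re\Sigma)^2+(\Im \Sigma)^2}.
\]

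\textbf{Step 2 (average over $g_0$).} Integrate $\Im R_{00}$ against the Gaussian density $\rho$ of $g_0$, using $\|\rho\|_\infty=(2\pi)^{-1/2}$:
\[
\E_{g_0}\big[\Im R_{00}(E+\iu\eta)\big] \le \|\rho\|_\infty \int_\R \frac{-\Im\Sigma}{(x-\Re\Sigma)^2+(\Im\Sigma)^2}\,dx = \pi \|\rho\|_\infty .
\]
This bound holds for every $E$ and every $\eta>0$. It also does not depend on $d$, on $t$, or on the conditioning. Taking the full expectation, Fubini gives $\Im m_{\bbT}(E+\iu\eta)=\E[\Im R_{00}(E+\iu\eta)]\le \pi\|\rho\|_\infty$ uniformly on $\bbH$.

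\textbf{Step 3 (convert to an interval bound).} Let $I=[a,b]$. The standard Stieltjes inversion inequality reads
\[
\nu(I)\le \liminf_{\eta\to 0}\frac{1}{\pi}\int_{a-\eta}^{b+\eta}\Im m_{\bbT}(E+\iu\eta)\,dE .
\]
Equivalently, one can use the elementary pointwise inequality $\one_{[a,b]}(\lambda)\le \frac{1}{\pi}\int_{a-\eta}^{b+\eta}\frac{\eta}{(\lambda-E)^2+\eta^2}\,dE\cdot C$ with an absolute constant, or simply work with the Lebesgue decomposition. Either way, a uniform bound on the Poisson integrals $\Im m_{\bbT}$ yields that $\nu$ is absolutely continuous with density at most $\|\rho\|_\infty$. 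It follows that $\nu(I)\le \|\rho\|_\infty |I| = (2\pi)^{-1/2}|I|$, so one may take $C=(2\pi)^{-1/2}$.

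\textbf{Main obstacle.} The delicate point is justifying the rank-one formula for the unbounded operator $H_{d,t}$. Specifically, one needs that $H'$ is self-adjoint on the domain of $V$ with $g_0$ removed, and that $\langle\delta_0,(H'-z)^{-1}\delta_0\rangle$ is measurable and independent of $g_0$. I would handle this via essential self-adjointness on finitely supported vectors, as cited from \cite{kirsch2008invitation}. The perturbation $g_0\langle \delta_0,\cdot\rangle\delta_0$ is bounded, so the second resolvent identity applies directly and gives the Krein-type formula in Step 1.
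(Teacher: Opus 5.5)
Your proposal is correct. The paper gives no argument of its own for this lemma; it simply invokes Wegner's estimate, citing \cite[(5.27)]{aizenman2013resonant}. You instead supply the standard self-contained proof behind that citation. Rank-one spectral averaging over the root potential gives $\Im m_{\bbT}(E+\iu\eta)\le \pi\|\rho\|_\infty$ uniformly on $\bbH$, and Poisson smoothing turns this into $\nu(I)\le \|\rho\|_\infty|I|$. The paper uses the same mechanism elsewhere: the conditional integration against $\rho$ in the proof of \Cref{l:self-energy}, and the approximate-identity argument in the proof of \Cref{l:dos_lower}. Your route costs a few lines more than a citation, but it makes the section self-contained and gives the explicit constant $C=(2\pi)^{-1/2}$, independent of $d$ and $t$. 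Your handling of the operator-theoretic point is fine: $H'$ is a bounded perturbation of $H_{d,t}$, and the second resolvent identity gives $R_{00}=F/(1+g_0F)$ with $F=\langle\delta_0,(H'-z)^{-1}\delta_0\rangle$. Moreover $\Im F=\eta\|(H'-z)^{-1}\delta_0\|^2>0$, so $F\neq 0$.

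There are two harmless slips. First, since $F\in\bbH$, you have $\Sigma=-1/F\in\bbH$, not $-\bbH$; indeed $\Sigma(z)=z+t^2\sum_{y\sim0}\Gamma_y(z)$ by the Schur complement. So $\Sigma$ itself is Herglotz and $\Im R_{00}=\Im\Sigma/\bigl((g_0-\Re\Sigma)^2+(\Im\Sigma)^2\bigr)$. Only $|\Im\Sigma|$ enters the integral, so the bound $\pi\|\rho\|_\infty$ stands. Second, the inversion inequality as you wrote it, with the closed interval $[a,b]$ on the left, can fail if $\nu$ has atoms at $a$ or $b$, because the limit weights them by $3/4$. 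Either apply it to open intervals first to rule out atoms, or use your pointwise bound. That bound holds with constant $2$ and gives $\nu([a,b])\le 2\|\rho\|_\infty(|I|+2\eta)$, which suffices after $\eta\to 0$.
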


We also require a lower bound that complements the previous Wegner estimate. It is proved in \Cref{app:DOS-bounds}.

\begin{lemma}\label[lemma]{l:dos_lower}
For every $d \ge 3$, $t>0$, and $D>0$, there exists a constant $c(d,t,D) > 0$ such that for all intervals $I \subset [-D,D]$, we have 
$ \nu (I) \ge c | I |$.
\end{lemma}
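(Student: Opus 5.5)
We need to prove Lemma \ref{l:dos_lower}: $\nu(I)\ge c|I|$ for all intervals $I\subset[-D,D]$. The plan is to bound the density of states from below through the Stieltjes transform, using the Gaussian density of the root potential.

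\textbf{Reduction.} It suffices to show that $\nu$ is absolutely continuous with density bounded below by $c>0$ on $[-D,D]$. Equivalently, it suffices to prove
\[
\E\big[\mu_0(I)\big]\ge c|I| \quad\text{for every interval } I\subset[-D,D].
\]

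\textbf{Step 1: condition on everything except the root potential.} Fix all $g_v$ with $v\neq0$, and write $g_0=x$. By the Schur complement formula,
\[
R_{00}(z)=\frac{1}{x-z-\Sigma(z)},\qquad \Sigma(z)=t^2\sum_{v\sim0}R^{(0)}_{vv}(z).
\]
Here $R^{(0)}$ is the resolvent with the root removed, and $\Sigma$ is independent of $x$. Equivalently, the rank-one perturbation (spectral averaging) formula applies: averaging the spectral measure $\mu_0^{x}$ against any density $\rho(x)\,dx$ gives
\[
\int \mu_0^{x}(I)\,\rho(x)\,dx=\int_I\int \rho(x)\,\frac{d\mu^{\Sigma}(x)}{\pi}\,dE .
\]
I would rather use the cleaner formulation: for a.e.\ $E$, the averaged density equals $\frac1\pi\E_x\big[\Im R_{00}(E+\iu0)\big]$, and
\[
\Im\frac{1}{x-E-\Sigma}\,,
\]
integrated over $x$ against the Gaussian density $\rho$, gives $\pi(\rho * P_{\Im\Sigma})(E+\Re\Sigma)$, where $P_y$ denotes the Poisson kernel.

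Indeed, with $\Sigma=a+\iu b$, $b\ge0$,
\[
\int \rho(x)\,\frac{b}{(x-E-a)^2+b^2}\,dx=\pi\,(\rho*P_b)(E+a),
\]
and when $b=0$ the same formula holds in the limit $\eta\to0$, giving $\pi\rho(E+a)$. Hence the density of $\nu$ at $E$ equals
\[
\E\big[(\rho*P_{b})(E+a)\big],
\]
the expectation being over $\Sigma(E+\iu0)$. This is justified by taking $\eta\downarrow0$ in $\frac1\pi\E\,\Im R_{00}(E+\iu\eta)$ and integrating over $E\in I$; weak convergence of the measures $\frac1\pi\Im m_\bbT(E+\iu\eta)\,dE$ to $\nu$ handles the limit.

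\textbf{Step 2: lower-bound the Poisson-smoothed Gaussian.} For $|a|\le K$ and $b\le K$, with $|E|\le D$, the quantity $(\rho*P_b)(E+a)$ is at least $c(D,K)>0$, since $\rho$ is positive and continuous and $P_b$ places mass at least $1/2$ on $[-b,b]$. So
\[
\text{density}\ge c(D,K)\,\P\big(|\Sigma(E+\iu\eta)|\le K\big),
\]
and it remains to show this probability is bounded below uniformly in $\eta$ and in $E\in[-D,D]$.

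\textbf{Step 3 (main obstacle): tightness of $\Sigma$ uniformly as $\eta\to0$.} Each $R^{(0)}_{vv}$ is a forward-tree resolvent entry. I would use the fractional moment bound \cite[Lemma 3.3]{aggarwal2025mobility} (the same input as in Lemma \ref{l:ui}): $\sup_\eta\E|R_{vv}^{(0)}|^\alpha<\infty$ for some $\alpha\in(0,1)$, locally uniformly in $E$. Markov's inequality then gives $\P(|\Sigma|>K)\le C K^{-\alpha}<1/2$ for $K$ large. Combining this with Step 2 yields a density at least $c/2$ on $[-D,D]$, and integrating over $I$ finishes the proof.

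Care is needed to interchange the limit $\eta\to0$ with the expectation. I would handle this by working at $\eta>0$ throughout: the identity in Step 1 holds exactly at $\eta>0$ with $b=\Im\Sigma+\eta$, so one obtains a lower bound on $\Im m_\bbT(E+\iu\eta)$ that is uniform in $\eta$. Stieltjes inversion then transfers this bound to $\nu(I)$.
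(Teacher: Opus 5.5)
Your proposal is correct and follows the same skeleton as the paper's proof in \Cref{app:DOS-bounds}. Both arguments use the Schur complement at the root and average over $V_0$ to get the Poisson-smoothed Gaussian $(\rho*P_b)(E+a)$, with $a=\Re\Sigma$ and $b=\eta+\Im\Sigma$. Both then bound this below on a compact set of $(a,b)$, bound the probability of landing in that set from below uniformly in $\eta$, and let $\eta\downarrow0$. The differences lie in the ingredients, and each is worth noting.

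\textbf{The good event.} The paper uses the self-contained \Cref{lem:forward-resolvent-small}. Conditioning on all potentials except $V_y$ gives $\Gamma_y=(V_y-a-\iu b)^{-1}$, so $\P(|\Gamma_y|\le1)\ge\P(|V_y|\ge1)$ by symmetric unimodality of $\rho$. Independence of the $d$ branches then gives probability at least $p_*^d$. You instead import a fractional-moment bound and apply Markov. That works, but you need the bound uniformly in $E\in[-D,D]$ and $\eta\in(0,1]$, and the version quoted in \Cref{l:ui} from \cite[Lemma 3.3]{aggarwal2025mobility} is stated at fixed $E$. The uniformity follows from the same conditioning: $|R^{(0)}_{vv}|\le|V_v-a|^{-1}$ and $\sup_{a\in\R}\E|V_v-a|^{-\alpha}\le 1+2\|\rho\|_\infty/(1-\alpha)$. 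So the two arguments are really one idea, and you should state this justification rather than just asserting local uniformity.

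\textbf{The compact-set bound.} Your observation that $P_b$ puts mass $1/2$ on $[-b,b]$ gives a more explicit constant than the paper's continuity-and-compactness argument for $F(a,b)$.

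\textbf{The limit $\eta\downarrow0$.} The paper first uses Wegner's estimate (\Cref{l:wegner}) to write $d\nu=n\,d\lambda$ with $n\in L^\infty$, then uses an approximate-identity argument. Your portmanteau step needs no absolute continuity: $\nu([a,b])\ge\limsup_{\eta\downarrow0}(\nu*P_\eta)([a,b])\ge c(b-a)$, followed by inner approximation for open intervals. So your stated ``reduction'' to a density bounded below is unnecessary, and your route is slightly more self-contained here.

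Finally, drop the boundary-value discussion in Step 1, namely the $\Sigma(E+\iu0)$ version and the spectral-averaging formula, which is garbled as written. The $\eta>0$ version you settle on is what makes the argument rigorous.
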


We next show convergence of Stieltjes transforms.

\begin{proposition}\label[proposition]{thm:global-law}
Fix $z\in \bbH$. Then $\lim_{N\rightarrow \infty} m_N(z) = m_{\bbT}(z)$ in probability.
\end{proposition}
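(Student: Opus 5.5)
We prove \Cref{thm:global-law} by splitting the error into a fluctuation part and a mean part:
\[
|m_N(z)-m_{\bbT}(z)| \le |m_N(z)-\E[m_N(z)]| + |\E[m_N(z)]-m_{\bbT}(z)|.
\]

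For the first term, write $m_N = N^{-1}\sum_j \Re G_{jj} + \iu N^{-1}\sum_j \Im G_{jj}$. Apply the $s=1$ case of \Cref{thm:conc-moments}, both for $\Im G_{jj}$ and for $\Re G_{jj}$ (the remark after \eqref{eq:conc-sge1} allows the latter). With $z$ fixed, hence $\eta$ fixed, taking any fixed $u>0$ gives a bound of the form
\[
\P\big(|m_N-\E[m_N]|\ge 2u\big)\le 8\exp(-cN\eta^4u^2) \longrightarrow 0 .
\]

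For the second term, we use exchangeability. The uniform random $d$-regular graph law is invariant under relabeling the vertices, and the potential is i.i.d., so $(W_N)_{jj}$-statistics are symmetric in $j$. Hence $\E[G_{jj}(z)]=\E[G_{11}(z)]$ for every $j$, and therefore $\E[m_N(z)]=\E[G_{11}(z)]$. On the tree side, the spectral theorem gives
\[
R_{00}(z)=\int (\lambda-z)^{-1}\,d\mu_0(\lambda),
\]
and taking expectations (Fubini applies since the integrand is bounded by $\eta^{-1}$) yields $\E[R_{00}(z)]=m_{\bbT}(z)$. It therefore suffices to show $\E[G_{11}(z)]\to\E[R_{00}(z)]$. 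By \Cref{thm:diag-resolvent-fixed-z}, under the coupling constructed there we have $G_{11}(z)\to R_{00}(z)$ in probability. Both quantities are bounded by $\eta^{-1}$ by \eqref{e:trivial}, so bounded convergence gives $\E|G_{11}(z)-R_{00}(z)|\to 0$. This expectation is taken in the coupled space, but the marginal of $G_{11}$ there is the law of $(W_N-z)^{-1}_{11}$, since $H_N$ has the law of $W_N$. Hence $\E[G_{11}(z)]$ computed in the coupled space equals its value for $W_N$.

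Combining the two estimates proves convergence in probability. No step is a serious obstacle. The only points needing care are the exchangeability claim, that the expectation of $m_N$ reduces to a single diagonal entry, and the justification that $\E[R_{00}]=m_{\bbT}$ via Fubini; both are routine given the definition of $\nu$.
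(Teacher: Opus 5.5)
Your proposal is correct and follows essentially the same route as the paper. The paper likewise uses exchangeability to get $\E[m_N(z)]=\E[G_{11}(z)]$, then the coupling of \Cref{thm:diag-resolvent-fixed-z} with bounded convergence to get $\E[G_{11}(z)]\to\E[R_{00}(z)]=m_{\bbT}(z)$, and the $s=1$ case of \Cref{thm:conc-moments} (for both real and imaginary parts) to control $m_N(z)-\E[m_N(z)]$. Your extra remarks, that $\E[R_{00}]=m_{\bbT}$ and that the coupled marginal of $G_{11}$ has the law of the $W_N$ entry, are details the paper leaves implicit.
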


\begin{proof}
Write $z = E+\iu\eta$, where $\eta>0$ is fixed. By exchangeability of the vertices,
\begin{equation}\label{e:exchange}
\E \big[m_N(z)\big] =\frac1N\sum_{j=1}^N \E\big[ G_{jj}(z)\big]=\E \big[G_{11}(z)\big].
\end{equation}
By \Cref{thm:diag-resolvent-fixed-z}, there exists a coupling such that $G_{11}(z)\to R_{00}(z)$
in probability. Since $|G_{11}(z)|\le \eta^{-1}$ and $|R_{00}(z)|\le \eta^{-1}$ deterministically,
the bounded convergence theorem yields, after using \eqref{e:exchange}, 
\begin{equation}\label{eq:annealed-global-law-short}
\lim_{N\to\infty} \E \big[m_N(z)\big] = \lim_{N\to\infty} \E \big[G_{11}(z)\big]=\E \big[R_{00}(z)\big]=m_{\bbT}(z).
\end{equation}

It remains to control the fluctuations of $m_N(z)$. By the $s=1$ case of \Cref{thm:conc-moments}
applied to the imaginary parts, and to the corresponding averages of the real parts, there exists
$C(d)>0$ such that for all $u>0$ and all sufficiently large $N$,
\[
\P\Big(\big|m_N(z)-\E\big[ m_N(z)\big] \big|\ge u\Big)
\le 2C\exp\big(-N\eta^4u^2/(4C)\big),
\]
which implies $m_N(z)-\E [m_N(z)]\to 0$ 
in probability. Combining this with \eqref{eq:annealed-global-law-short} completes the proof.
\end{proof}

\subsection{Conclusion}\label{subsec:counting-proof}

\begin{proof}[Proof of \Cref{l:spectral_density_bounds}]
Let 
\[\hat \mu_N = \frac{1}{N} \sum_{i=1}^N \delta_{\lambda_i}
\]
denote the empirical measure of $W_N$. By a basic property of Stieltjes transforms, the convergence of Stieltjes transforms in probability at every $z\in \bbH$ provided by \Cref{thm:global-law} implies that $\hat \mu_N$ converges weakly to $\nu$ in probability (see, e.g., \cite[Lemma~2.1]{benaych2016lectures}). Since $\nu$ is absolutely continuous, by \Cref{l:wegner}, we have that $\hat \mu_N(I)$ converges to $\nu(I)$ in probability for every interval $I \subset \R$. 
The conclusion now follows from \Cref{l:wegner} and \Cref{l:dos_lower}.
\end{proof}

\section{Uniform Moment Bound for the Resolvent}
\label{s:negativemoment}

In this section, we prove \Cref{l:self-energy}.

\subsection{Preliminaries}

It is well known (see, e.g., \cite[(2.2)]{aggarwal2025mobility}) that for every $z \in \bbH$, the resolvent entry $R'_{00}(z)$ for the Anderson model on a variant of $\bbT_d$, where the root degree is $d-1$ and the degree of all other vertices is $d$, solves the recursive distribution equation 
\begin{equation}\label{eq:RDE-cavity-rev}
\Gamma_\eta \stackrel{d}{=} \frac{1}{V_0-E-\iu\eta-t^2\sum_{j=1}^{d-1}\Gamma_{\eta,j}},
\end{equation}
where $V_0\sim N(0,1)$, the variables $\Gamma_{\eta,1},\dots,\Gamma_{\eta,d-1}$ and $\Gamma_\eta$ are i.i.d.\ and all variables on the right-hand side are independent.

We write
\begin{equation}\label{xetayeta}
\Gamma_\eta=X_\eta+\iu Y_\eta,
\end{equation}
where $X_\eta$ and $Y_\eta$ are real. We have $Y_\eta>0$ almost surely and
\begin{equation}\label{eq:XY-formulas-rev}
X_\eta=\frac{U_\eta}{U_\eta^2+S_\eta^2},\qquad
Y_\eta=\frac{S_\eta}{U_\eta^2+S_\eta^2},
\end{equation}
where
\begin{equation}\label{eq:US-def-rev}
U_\eta=V_0-E-t^2\sum_{j=1}^{d-1} X_{\eta,j},
\qquad
S_\eta=\eta+t^2\sum_{j=1}^{d-1} Y_{\eta,j},
\end{equation}
with the $X_{\eta,j}$ and $Y_{\eta,j}$ defined as in \eqref{xetayeta}. 
Moreover, the Schur complement formula (see, e.g., \cite{aizenman2013resonant}) implies 
\begin{equation}\label{eq:root-formula-rev}
R_{00}(E+\iu\eta)=\frac{1}{V_0-E-\iu\eta-t^2\sum_{j=1}^{d}\Gamma_{\eta}^{(j)}},
\end{equation}
where $V_0\sim N(0,1)$ is independent of the i.i.d.\ random variables
$\Gamma_{\eta}^{(1)},\dots,\Gamma_{\eta}^{(d)}$, each distributed as $\Gamma_\eta$.
In particular, defining 
\begin{equation}\label{eq:B-def-rev}
B_\eta=\eta+t^2\sum_{j=1}^{d}Y_{\eta}^{(j)},
\end{equation}
where $Y_{\eta}^{(j)}$ is defined analogously to $Y_{\eta, j}$,  there is a real random variable $A_\eta$, independent of $V_0$, such that
\begin{equation}\label{eq:ImR-root-rev}
\Im R_{00}(E+\iu\eta)=\frac{B_\eta}{(V_0-A_\eta)^2+B_\eta^2}.
\end{equation}

For all lemmas in this section, we retain the notation and hypothesis of \Cref{l:self-energy}. In particular, $E \in \R$, $d \ge 10$, and $t>0$ are always fixed. We also let $\rho$ denote the density of a mean zero, variance one Gaussian random variable. The constants in this section may depend on the constant in the hypothesis of \Cref{l:self-energy}, but we always omit this dependence in the notation.

\begin{lemma}\label[lemma]{l:uniform-seed}
There exist constants $\eta_0(d,t,E)\in(0,1]$, $y_*(d,t,E)>0$, and $p_*(d,t,E)>0$,
such that for all $\eta \in (0, \eta_0]$,
\begin{equation}\label{eq:uniform-seed-rev}
\P(Y_\eta>y_*)\ge p_*.
\end{equation}
\end{lemma}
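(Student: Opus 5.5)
We plan to transfer the hypothesis of \Cref{l:self-energy}, which concerns the root variable $\Im R_{00}$, to the cavity variable $Y_\eta$ via \eqref{eq:ImR-root-rev}. The hypothesis gives $c>0$ and $\eta_1>0$ such that $\P(\Im R_{00}(E+\iu\eta)>c)>c$ for all $\eta\in(0,\eta_1]$. Since $\Im R_{00}=B_\eta/((V_0-A_\eta)^2+B_\eta^2)\le 1/B_\eta$, on this event we have $B_\eta< c^{-1}$. The event $\{\Im R_{00}>c\}$ also forces $V_0$ to lie in a window around $A_\eta$ of width at most $2\sqrt{B_\eta/c}$. We will use this to show that small $B_\eta$ is incompatible with $\Im R_{00}>c$ carrying probability at least $c$.

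Concretely, condition on $(A_\eta,B_\eta)$, which is independent of $V_0$. Then
\[
\P(\Im R_{00}>c\mid A_\eta,B_\eta)\le \P\big(|V_0-A_\eta|\le \sqrt{B_\eta/c}\,\big|\,A_\eta,B_\eta\big)\le 2\|\rho\|_\infty\sqrt{B_\eta/c}.
\]
Fix $b>0$ small enough that $2\|\rho\|_\infty\sqrt{b/c}\le c/2$. Then
\[
c<\P(\Im R_{00}>c)\le \P(B_\eta>b)+c/2,
\]
so $\P(B_\eta>b)\ge c/2$ uniformly in $\eta\in(0,\eta_1]$.

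Next, for $\eta\le \min(\eta_1,b/2)$, the event $\{B_\eta>b\}$ implies $t^2\sum_{j=1}^d Y^{(j)}_\eta>b/2$. Hence some $j$ satisfies $Y^{(j)}_\eta>b/(2dt^2)$. A union bound over $j\le d$, using that the $Y^{(j)}_\eta$ are identically distributed copies of $Y_\eta$, gives
\[
d\,\P\big(Y_\eta>b/(2dt^2)\big)\ge c/2.
\]
This is \eqref{eq:uniform-seed-rev} with $y_*=b/(2dt^2)$, $p_*=c/(2d)$, and $\eta_0=\min(1,\eta_1,b/2)$.

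The only delicate point is the anti-concentration step: it uses the independence of $V_0$ from $(A_\eta,B_\eta)$ in \eqref{eq:ImR-root-rev} and the boundedness of the Gaussian density. We also must check that $A_\eta$ and $B_\eta$ are built solely from the $\Gamma^{(j)}_\eta$ variables, which holds by \eqref{eq:root-formula-rev}. As a result, the constants depend only on $d$, $t$, and the hypothesis constant $c$ (and implicitly on $E$ through $\eta_1$), consistent with the statement.
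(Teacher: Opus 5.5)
Your proof is correct and follows essentially the same route as the paper: conditioning on $(A_\eta,B_\eta)$ and using $\|\rho\|_\infty$ to show that $\P(B_\eta>b)$ is bounded below uniformly for small $\eta$, then a union bound over the $d$ children once $\eta\le b/2$. The only cosmetic difference is that you bound the conditional probability by $1$ on $\{B_\eta>b\}$ instead of integrating the $B_\eta^{1/2}$ bound as the paper does, which is slightly more direct.
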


\begin{proof}
Choose $p_0>0$ and $\eta_1\in(0,1]$ so that, for all $0<\eta\le \eta_1$,
\begin{equation}\label{eq:seed-root-prob-rev}
\P\bigl(\Im R_{00}(E+\iu\eta)>c\bigr)\ge 2p_0.
\end{equation}
Fix $0<\eta\le \eta_1$. Conditional on $(A_\eta,B_\eta)$ in \eqref{eq:ImR-root-rev}, the event
$\{\Im R_{00}(E+\iu\eta)>c\}$ is empty unless $B_\eta<1/c$, in which case it is equivalent to
\[
|V_0-A_\eta|<\sqrt{\frac{B_\eta}{c}-B_\eta^2}.
\]
Since $\|\rho\|_\infty=(2\pi)^{-1/2}$,
\begin{equation}\label{eq:conditional-root-prob-rev}
\P\bigl(\Im R_{00}(E+\iu\eta)>c\mid A_\eta,B_\eta\bigr)
\le 2\|\rho\|_\infty c^{-1/2} B_\eta^{1/2}\one_{\{B_\eta<1/c\}}.
\end{equation}
Set
\[
C_{\mathrm G}=2\|\rho\|_\infty c^{-1/2}.
\]
Taking expectations and using \eqref{eq:seed-root-prob-rev},
\begin{equation}\label{eq:sqrtB-lower-rev}
2p_0\le C_{\mathrm G}\E\bigl[B_\eta^{1/2}\one_{\{B_\eta<1/c\}}\bigr].
\end{equation}
Choose
\[
b_*=\min\left\{\frac{1}{4c}, \left(\frac{p_0}{C_{\mathrm G}}\right)^2\right\}.
\]
Then
\[
\E\bigl[B_\eta^{1/2}\one_{\{B_\eta<1/c\}}\bigr]
\le b_*^{1/2}+c^{-1/2}\P(B_\eta>b_*),
\]
and therefore \eqref{eq:sqrtB-lower-rev} implies
\[
2p_0\le p_0+C_{\mathrm G}c^{-1/2}\P(B_\eta>b_*).
\]
Hence, for all $0<\eta\le \eta_1$,
\begin{equation}\label{eq:B-lower-tail-seed-rev}
\P(B_\eta>b_*)\ge p_1>0, \qquad p_1=\frac{p_0 c^{1/2}}{C_{\mathrm G}}.
\end{equation}

Now set
\[
\eta_0=\min\{\eta_1,b_*/2\},
\qquad
y_*=\frac{b_*}{2 d t^2}.
\]
If $0<\eta\le \eta_0$ and $B_\eta>b_*$, then
\[
t^2\sum_{j=1}^{d}Y_\eta^{(j)}\ge B_\eta-\eta>b_*/2,
\]
so for at least one index $j$ we have $Y_\eta^{(j)}>y_*$. By the union bound and symmetry,
\[
p_1\le \P\left(\bigcup_{j=1}^{d}\{Y_\eta^{(j)}>y_*\}\right)
\le d \P(Y_\eta>y_*).
\]
Thus \eqref{eq:uniform-seed-rev} holds with
\[
p_*=\frac{p_1}{d}>0.
\qedhere
\]
\end{proof}

\begin{lemma}\label[lemma]{l:onestep-tails}
 There exists a constant $C(d,t,E)>0$, such that for every $0<\eta\le \eta_0$, the following bounds hold.
 \begin{enumerate}
 \item For all $u >0$,
 \begin{equation}
 \label{eq:X-tail-uncond-rev}
 \P(|X_\eta|>u) \le \frac{C}{u}, \qquad \P(Y_\eta>u)\le \frac{C}{u}.
 \end{equation}
 \item
 For all $u \ge 1$, 
 \begin{equation}
 \label{eq:U-tail-rev}
 \P(|U_\eta|>u)\le \frac{C}{u}. 
 \end{equation}
 \item For all $u\ge 2\eta_0$,
 \begin{equation}
 \label{eq:S-tail-rev}
 \P(S_\eta>u) \le \frac{C}{u}. 
 \end{equation}
 \end{enumerate}
\end{lemma}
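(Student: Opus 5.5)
The plan is to use the explicit formulas \eqref{eq:XY-formulas-rev}, \eqref{eq:US-def-rev}, together with the seed bound of \Cref{l:uniform-seed}. The tails of $X_\eta$ and $Y_\eta$ will come from the fact that $|\Gamma_\eta| \le 1/|U_\eta|$, so large $|X_\eta|$ or $Y_\eta$ forces $V_0$ into a short window.

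\emph{Step 1: tails of $X_\eta$ and $Y_\eta$.} Since $|\Gamma_\eta|^2 = 1/(U_\eta^2+S_\eta^2)$, we have $|X_\eta|,Y_\eta \le 1/|U_\eta|$. Hence $\{|X_\eta|>u\}\cup\{Y_\eta>u\}\subset\{|U_\eta|<1/u\}$. Condition on the children variables $(X_{\eta,j})_{j}$, which are independent of $V_0$. Then $U_\eta = V_0 - a$ for a fixed real number $a$. Since $\|\rho\|_\infty=(2\pi)^{-1/2}$, we get
\[
\P\big(|U_\eta|<1/u \mid (X_{\eta,j})_{j}\big)\le 2\|\rho\|_\infty u^{-1}.
\]
Integrating gives \eqref{eq:X-tail-uncond-rev} with $C=2(2\pi)^{-1/2}$. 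This bound is uniform in $\eta$ and uses neither the seed nor $\eta\le\eta_0$.

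\emph{Step 2: tail of $U_\eta$.} Write $|U_\eta|\le |V_0|+|E|+t^2\sum_{j=1}^{d-1}|X_{\eta,j}|$. If $|U_\eta|>u$ with $u\ge 1$, then one of the $d+1$ summands exceeds $u/(d+1)$. Gaussian tails give $\P(|V_0|>u/(d+1))\le C/u$. The $|E|$ term is handled by noting that $|E|>u/(d+1)$ only for $u<(d+1)|E|$, in which case $1\le (d+1)|E|/u$ and the bound is trivial after enlarging $C$. Each $X_{\eta,j}$ term is bounded by Step 1 with $u$ replaced by $u/((d+1)t^2)$. A union bound then gives \eqref{eq:U-tail-rev}.

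\emph{Step 3: tail of $S_\eta$.} For $u\ge 2\eta_0\ge 2\eta$, the event $S_\eta>u$ implies $t^2\sum_{j=1}^{d-1}Y_{\eta,j}>u/2$. Hence some $Y_{\eta,j}>u/(2(d-1)t^2)$. A union bound with Step 1 gives $\P(S_\eta>u)\le 2(d-1)^2t^2C/u$, which is \eqref{eq:S-tail-rev}.

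Constants then depend only on $d,t,E$. The only mild subtlety is the independence structure in the conditioning of Step 1, which is immediate from the recursive distribution equation \eqref{eq:RDE-cavity-rev}: $V_0$ is independent of the $\Gamma_{\eta,j}$. I do not expect a real obstacle; the restriction $\eta\le\eta_0$ is needed only in Step 3, to absorb the $\eta$ term in $S_\eta$.
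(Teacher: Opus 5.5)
Your proof is correct and follows essentially the same route as the paper: condition on the children so that $U_\eta = V_0 - a$, bound the resulting window probability by $\|\rho\|_\infty$ times its length, and then use union bounds for $U_\eta$ and $S_\eta$. The differences are cosmetic. You reduce both tails in Step 1 to the single event $\{|U_\eta|<1/u\}$ via $|X_\eta|, Y_\eta \le 1/|U_\eta|$, whereas the paper directly computes the Lebesgue measure of the superlevel sets of $x\mapsto |x|/(x^2+s^2)$ and $x\mapsto s/(x^2+s^2)$. Also, the seed bound you mention in your plan is never actually needed.
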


\begin{proof}
For all $s>0$ and $u>0$, the set
\[
\left\{x\in\R: \frac{|x|}{x^2+s^2}>u\right\}
\]
has Lebesgue measure at most $2/u$. Therefore, conditioning on $S_\eta$ and 
\[
A=E+t^2\sum_{j=1}^{d-1} X_{\eta,j}, 
\]
we obtain from \eqref{eq:XY-formulas-rev} that
\[
\P\bigl(|X_\eta|>u\mid A,S_\eta\bigr)\le \frac{2\|\rho\|_\infty}{u}.
\]
Taking expectations yields the first bound of \eqref{eq:X-tail-uncond-rev}.

Likewise, the set
\[
\left\{x\in\R: \frac{s}{x^2+s^2}>u\right\}
\]
has Lebesgue measure at most $1/u$, hence
\[
\P\bigl(Y_\eta>u\mid A,S_\eta\bigr)\le \frac{\|\rho\|_\infty}{u},
\]
and therefore the second bound of \eqref{eq:X-tail-uncond-rev} follows after taking expectation.

For \eqref{eq:U-tail-rev}, we use \eqref{eq:US-def-rev} and the union bound to get 
\[
\P\big(|U_\eta|>u\big)
\le \P\big(|V_0-E|>u/2\big)+\sum_{j=1}^{d-1}\P\left(|X_{\eta,j}|>\frac{u}{2(d-1)t^2}\right).
\]
The first term is bounded by $C/u$ for $u\ge 1$ and the second by \eqref{eq:X-tail-uncond-rev}, which gives \eqref{eq:U-tail-rev}.

Finally, if $u\ge 2\eta_0$ and $0<\eta\le \eta_0$, then
\[
S_\eta=\eta+t^2\sum_{j=1}^{d-1} Y_{\eta,j}>u\]
implies
\[
\sum_{j=1}^{d-1} Y_{\eta,j}>\frac{u}{2t^2},
\]
so at least one index satisfies $Y_{\eta,j}>u/(2(d-1)t^2)$. Hence
\[
\P(S_\eta>u)
\le \sum_{j=1}^{d-1} \P\left(Y_{\eta,j}>\frac{u}{2(d-1)t^2}\right),
\]
and \eqref{eq:S-tail-rev} follows from \eqref{eq:X-tail-uncond-rev}.
\end{proof}

\subsection{Bootstrap and Conclusion} 

\begin{lemma}\label[lemma]{l:recursive-lower-tail}
 For $\varepsilon>0$, define
\[
F_\eta(\varepsilon)=\P(Y_\eta\le \varepsilon).
\]
There exist constants $C_1(d,t,E),C_2(d,t,E) > 0 $ and $\varepsilon_1(d,t,E)\in(0,1]$ such that, for all $0<\varepsilon\le \varepsilon_1$ and $0<\eta\le \eta_0$,
\begin{equation}\label{eq:F-recursion-rev}
F_\eta(\varepsilon)
\le F_\eta(C_1\varepsilon^{1/3})^{d-1} + C_2\varepsilon^{1/3}.
\end{equation}
\end{lemma}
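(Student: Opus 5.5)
We need to bound $\P(Y_\eta\le\varepsilon)$, where $Y_\eta = S_\eta/(U_\eta^2+S_\eta^2)$ by \eqref{eq:XY-formulas-rev}. The plan is to split the event $\{Y_\eta\le\varepsilon\}$ according to whether $S_\eta$ is small, $U_\eta$ is large, or $S_\eta$ is large. Fix a threshold $\theta=C_1\varepsilon^{1/3}$, with $C_1$ to be chosen.

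If $Y_\eta\le\varepsilon$, then one of the following holds:
\begin{enumerate}
\item $S_\eta\le t^2\theta$;
\item $S_\eta>t^2\theta$ and $U_\eta^2+S_\eta^2\ge S_\eta/\varepsilon$, which forces $U_\eta^2+S_\eta^2\ge t^2\theta/\varepsilon$, so either $|U_\eta|\ge (t^2\theta/(2\varepsilon))^{1/2}$ or $S_\eta\ge (t^2\theta/(2\varepsilon))^{1/2}$.
\end{enumerate}

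In the second case, $t^2\theta/\varepsilon\asymp\varepsilon^{-2/3}$, so the threshold is of order $\varepsilon^{-1/3}$. This is $\ge1$ and $\ge2\eta_0$ once $\varepsilon\le\varepsilon_1$ is small. The tail bounds \eqref{eq:U-tail-rev} and \eqref{eq:S-tail-rev} of \Cref{l:onestep-tails} then give a total probability of at most $C\varepsilon^{1/3}$. This is the $C_2\varepsilon^{1/3}$ term.

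In the first case, $S_\eta=\eta+t^2\sum_{j=1}^{d-1}Y_{\eta,j}\le t^2\theta$ implies $Y_{\eta,j}\le\theta$ for every $j$, since $\eta>0$ and $Y_{\eta,j}>0$. By independence of the $Y_{\eta,j}$, which are i.i.d.\ copies of $Y_\eta$, this has probability at most $F_\eta(\theta)^{d-1}=F_\eta(C_1\varepsilon^{1/3})^{d-1}$.

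The only points to check are the following. The choice of the exponent $1/3$ balances the two thresholds: with $\theta\asymp\varepsilon^{a}$, the Cauchy tail gives $(\varepsilon/\theta)^{1/2}=\varepsilon^{(1-a)/2}$, and setting this equal to $\varepsilon^{a}$ yields $a=1/3$. One can absorb $t^2$ into $C_1$ by setting $C_1=1$ and using the threshold $S_\eta\le t^2\varepsilon^{1/3}$. Finally, $\varepsilon_1$ must be small enough, in terms of $t$ and $\eta_0$, that both tail lemmas apply.

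There is no serious obstacle here. The most delicate part is the bookkeeping of the constants and of the ranges $u\ge1$ and $u\ge 2\eta_0$ required by \Cref{l:onestep-tails}. The constants depend only on $(d,t,E)$, and on the hypothesis constant through $\eta_0$, and are uniform in $\eta\le\eta_0$, as the lemma requires.
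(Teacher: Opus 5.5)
Your proposal is correct and is essentially the paper's argument. In both, small $S_\eta$ yields the recursive term $F_\eta(\cdot)^{d-1}$ by independence, and otherwise $|U_\eta|$ or $S_\eta$ must exceed a threshold of order $\varepsilon^{-1/3}$, which the Cauchy-type tails \eqref{eq:U-tail-rev} and \eqref{eq:S-tail-rev} control. Your two-way split is slightly cleaner: from $U_\eta^2+S_\eta^2 > t^2\theta/\varepsilon$ you conclude directly that one of the two is large, avoiding the paper's middle range $\varepsilon^{1/3}<S_\eta\le\varepsilon^{-1/3}$ and its monotonicity argument, and your normalization allows $C_1=1$ in place of $\max\{1,t^{-2}\}$.
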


\begin{proof}
We use that the event $\{Y_\eta\le \varepsilon\}$ is contained in 
\[
\{S_\eta\le \varepsilon^{1/3}\}
\cup
\{\varepsilon^{1/3}<S_\eta\le \varepsilon^{-1/3}, Y_\eta\le \varepsilon\}
\cup
\{S_\eta>\varepsilon^{-1/3}\}.
\]
On the first event, each summand in \eqref{eq:US-def-rev} satisfies
\[
Y_{\eta,j}\le t^{-2}\varepsilon^{1/3},
\]
so by independence,
\begin{equation}\label{eq:T1-rev}
\P(S_\eta\le \varepsilon^{1/3})\le F_\eta(t^{-2}\varepsilon^{1/3})^{d-1}.
\end{equation}
On the third event, \eqref{eq:S-tail-rev} yields
\begin{equation}\label{eq:T3-rev}
\P(S_\eta>\varepsilon^{-1/3})\le C\varepsilon^{1/3}
\end{equation}
for all $\varepsilon\le \varepsilon_1$ small enough that $\varepsilon^{-1/3}\ge 2\eta_0$.

Finally, on the middle event,
\[
\varepsilon\ge Y_\eta=\frac{S_\eta}{U_\eta^2+S_\eta^2},
\]
which implies
\[
U_\eta^2\ge \frac{S_\eta}{\varepsilon}-S_\eta^2.
\]
For fixed $\varepsilon$, the function $s\mapsto s/\varepsilon-s^2$ is increasing on
$[\varepsilon^{1/3},\varepsilon^{-1/3}]$ once $\varepsilon$ is small enough, because its derivative is
$\varepsilon^{-1}-2s>0$ there. Hence, for $0<\varepsilon\le \varepsilon_1$ sufficiently small,
\[
\frac{S_\eta}{\varepsilon}-S_\eta^2
\ge \frac{\varepsilon^{1/3}}{\varepsilon}-\varepsilon^{2/3}
=\varepsilon^{-2/3}-\varepsilon^{2/3}
\ge \frac12\varepsilon^{-2/3}.
\]
Therefore,
\[
\P\bigl(\varepsilon^{1/3}<S_\eta\le \varepsilon^{-1/3}, Y_\eta\le \varepsilon\bigr)
\le \P\bigl(|U_\eta|\ge 2^{-1/2}\varepsilon^{-1/3}\bigr)
\le C\varepsilon^{1/3}
\]
by \eqref{eq:U-tail-rev}, for all $\varepsilon\le \varepsilon_1$ small enough that $2^{-1/2}\varepsilon^{-1/3}\ge 1$.
Set $C_1=\max\{1,t^{-2}\}$. 
Since $F_\eta$ is nondecreasing and $t^{-2}\le C_1$, \eqref{eq:T1-rev} implies
\[
\P(S_\eta\le \varepsilon^{1/3})\le F_\eta(C_1\varepsilon^{1/3})^{d-1}.
\]
Combining this with \eqref{eq:T3-rev} and the bound on the middle event proves
\eqref{eq:F-recursion-rev}.
\end{proof}

\begin{lemma}\label[lemma]{l:poly-lower-tail}
There exist constants $C_*(d,t,E)>0$ and $\varepsilon_*(d,t,E)\in(0,1]$ such that, for all
$0<\varepsilon\le \varepsilon_*$ and $0<\eta\le \eta_0$,
\begin{equation}\label{eq:Y-lower-tail-poly-rev}
\P(Y_\eta\le \varepsilon)\le C_*\varepsilon^\beta,
\qquad
\beta=\frac{31}{301}.
\end{equation}
\end{lemma}

\begin{proof}
Let $C_1,C_2,\varepsilon_1$ be as in \Cref{l:recursive-lower-tail}, and set
\[
\bar q=1-p_*\in(0,1).
\]
Define a deterministic sequence for $k\ge 0$ by
\[
r_0=\bar q,\qquad r_{k+1}=\frac{1}{2} (r_k^{\,d-1}+r_k).
\]
Since $0<r_k<1$, we have $r_{k+1}<r_k$ for all $k$, and hence $r_k\rightarrow 0$ (since any limit point must satisfy the previous recursion). Choose
$m\in \mathbb N$ such that
\[
r_m\le \frac12.
\]

Choose
\[
\tilde\varepsilon_0\in (0,\min\{y_*,\varepsilon_1,1\}]
\]
so small that, for the sequence defined by
\[
\tilde\varepsilon_{n+1}=\Bigl(\frac{\tilde\varepsilon_n}{C_1}\Bigr)^3,
\qquad
L_n=-\ln \tilde\varepsilon_n,
\]
the following hold:
\begin{equation}\label{eq:short-smallness}
L_0\ge 300\ln C_1,
\qquad
L_1\ge \max\{50\ln(2C_2),\,2\},
\end{equation}
and
\begin{equation}\label{eq:short-bootstrap}
C_2\tilde\varepsilon_{k+1}^{1/3}\le \frac12\bigl(r_k-r_k^{\,d-1}\bigr)
\qquad\text{for all }0\le k<m.
\end{equation}
Since $L_{n+1}=3L_n+3\ln C_1$, the first condition implies
\begin{equation}\label{eq:L-ratio-short}
L_{n+1}\le \frac{301}{100}L_n
\qquad\text{for all }n\ge 0.
\end{equation}

Define
\[
q_n=\sup_{0<\eta\le \eta_0}F_\eta(\tilde\varepsilon_n),
\qquad
\nu_n=-\ln q_n.
\]
Because $\tilde\varepsilon_0\le y_*$, \Cref{l:uniform-seed} gives
\[
q_0\le \bar q=r_0.
\]
Also, since $C_1\tilde\varepsilon_{n+1}^{1/3}=\tilde\varepsilon_n$, \Cref{l:recursive-lower-tail} yields
\begin{equation}\label{e:reback}
q_{n+1}\le q_n^{\,d-1}+C_2\tilde\varepsilon_{n+1}^{1/3}.
\end{equation}

We claim that
\[
q_k\le r_k\qquad\text{for all }0\le k\le m.
\]
This is true for $k=0$. If $k<m$ and $q_k\le r_k$, then by \eqref{eq:short-bootstrap},
\[
q_{k+1}\le q_k^{\,d-1}+C_2\tilde\varepsilon_{k+1}^{1/3}
\le r_k^{\,d-1}+\frac12\bigl(r_k-r_k^{\,d-1}\bigr)
= \frac12\bigl(r_k^{\,d-1}+r_k\bigr)
= r_{k+1}.
\]
Thus the claim follows by induction. In particular,
\[
q_m\le r_m\le \frac12,
\]
and hence
\begin{equation}\label{eq:nu-m-short}
\nu_m\ge \ln 2.
\end{equation}

From \eqref{e:reback} and \(a+b\le 2\max\{a,b\}\), and the definitions of $L_{n}$ and $\nu_n$, we get
\[
\nu_{n+1}\ge \min\Bigl\{(d-1)\nu_n,\frac13L_{n+1}-\ln C_2\Bigr\}-\ln 2.
\]
Since $L_{n+1}\ge L_1\ge 50\ln(2C_2)$, we obtain
\[
\frac13L_{n+1}-\ln C_2-\ln 2
=
\frac13L_{n+1}-\ln(2C_2)
\ge \frac{31}{100}L_{n+1}.
\]
Therefore
\begin{equation}\label{eq:nu-rec-short}
\nu_{n+1}\ge \min\Bigl\{(d-1)\nu_n-\ln 2,\frac{31}{100}L_{n+1}\Bigr\}
\qquad\text{for all }n\ge 0.
\end{equation}

We claim that there exists $n_*\ge m$ such that
\begin{equation}\label{eq:first-hit-short}
\nu_{n_*}\ge \frac{31}{301}L_{n_*+1}.
\end{equation}
Suppose not. Then
\[
\nu_n<\frac{31}{301}L_{n+1}\qquad\text{for every }n\ge m.
\]
If for some $n\ge m$ we had
\[
(d-1)\nu_n-\ln 2\ge \frac{31}{100}L_{n+1},
\]
then \eqref{eq:nu-rec-short} would imply
\[
\nu_{n+1}\ge \frac{31}{100}L_{n+1}\ge \frac{31}{301}L_{n+2},
\]
by \eqref{eq:L-ratio-short}, contradicting the assumption. Hence necessarily
\[
(d-1)\nu_n-\ln 2<\frac{31}{100}L_{n+1}
\qquad\text{for all }n\ge m,
\]
and then \eqref{eq:nu-rec-short} gives
\[
\nu_{n+1}\ge (d-1)\nu_n-\ln 2
\qquad\text{for all }n\ge m.
\]
Iterating,
\[
\nu_n\ge (d-1)^{n-m}\Bigl(\nu_m-\frac{\ln 2}{d-2}\Bigr)+\frac{\ln 2}{d-2}
\qquad\text{for all }n\ge m.
\]
By \eqref{eq:nu-m-short} and the assumption $d\ge 10$, the coefficient
\[
\nu_m-\frac{\ln 2}{d-2}
\]
is strictly positive. On the other hand, \eqref{eq:L-ratio-short} implies
\[
L_{n+1}\le \Bigl(\frac{301}{100}\Bigr)^{n-m}L_{m+1}.
\]
Since $d-1\ge 9>301/100$, the inequality
\[
(d-1)^{n-m}\Bigl(\nu_m-\frac{\ln 2}{d-2}\Bigr)+\frac{\ln 2}{d-2}
<
\frac{31}{301}\Bigl(\frac{301}{100}\Bigr)^{n-m}L_{m+1}
\]
cannot hold for all large $n$, contradiction. This proves \eqref{eq:first-hit-short}.

Now \eqref{eq:first-hit-short} propagates. Indeed, if for some $n\ge n_*$,
\[
\nu_n\ge \frac{31}{301}L_{n+1},
\]
then, because $d-1\ge 9$ and $L_{n+1}\ge L_1\ge 2$,
\[
(d-1)\nu_n-\ln 2
\ge \frac{279}{301}L_{n+1}-\ln 2
\ge \frac{31}{100}L_{n+1}.
\]
Thus \eqref{eq:nu-rec-short} implies
\[
\nu_{n+1}\ge \frac{31}{100}L_{n+1}\ge \frac{31}{301}L_{n+2},
\]
again by \eqref{eq:L-ratio-short}. By induction,
\[
\nu_n\ge \frac{31}{301}L_{n+1}\qquad\text{for all }n\ge n_*.
\]
Equivalently,
\[
q_n\le \tilde\varepsilon_{n+1}^{31/301}\qquad\text{for all }n\ge n_*.
\]

Finally, let $\varepsilon_*=\tilde\varepsilon_{n_*}$, and fix $0<\varepsilon\le \varepsilon_*$. Choose
$n\ge n_*$ such that
\[
\tilde\varepsilon_{n+1}<\varepsilon\le \tilde\varepsilon_n.
\]
Since $F_\eta$ is nondecreasing,
\[
F_\eta(\varepsilon)\le q_n\le \tilde\varepsilon_{n+1}^{31/301}<\varepsilon^{31/301}.
\]
This proves \eqref{eq:Y-lower-tail-poly-rev} with $C_*=1$.
\end{proof}

\begin{lemma}\label[lemma]{l:root-self-energy}
 There exists a constant $C(t,d,E)>0$ such that
\begin{equation}\label{eq:Binv-bound-rev}
\sup_{0<\eta\le \eta_0}\E[B_\eta^{-1}]\le C.
\end{equation}
\end{lemma}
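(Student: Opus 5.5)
We would prove \eqref{eq:Binv-bound-rev} by reducing the negative moment of $B_\eta$ to lower-tail probabilities of the $Y_\eta^{(j)}$. The polynomial lower-tail bound of \Cref{l:poly-lower-tail} then upgrades, through independence, into tails decaying fast enough to be integrable against $x^{-2}$.

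The first step is to drop $\eta$. From \eqref{eq:B-def-rev} we have
\[
B_\eta \ge t^2\sum_{j=1}^d Y_\eta^{(j)} \ge t^2 \max_j Y_\eta^{(j)}.
\]
Hence for every $\varepsilon>0$,
\[
\P\bigl(B_\eta \le t^2\varepsilon\bigr) \le \P\bigl(Y_\eta^{(j)}\le \varepsilon \text{ for all } j\bigr) = F_\eta(\varepsilon)^d .
\]
The equality uses that the $Y_\eta^{(j)}$ are i.i.d.\ copies of $Y_\eta$.

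The second step applies \Cref{l:poly-lower-tail}. For $\varepsilon\le\varepsilon_*$ and $0<\eta\le\eta_0$ it gives
\[
\P\bigl(B_\eta\le t^2\varepsilon\bigr)\le C_*^d\,\varepsilon^{d\beta},
\qquad \beta=\tfrac{31}{301}.
\]
Since $d\ge 10$, the exponent satisfies $d\beta\ge 310/301>1$. This is exactly the amount of decay that makes $\E[B_\eta^{-1}]$ finite.

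The third step is the layer-cake formula:
\[
\E[B_\eta^{-1}]=\int_0^\infty \P\bigl(B_\eta^{-1}>x\bigr)\,dx = \int_0^\infty \P\bigl(B_\eta<1/x\bigr)\,dx .
\]
We split at $x_0=(t^2\varepsilon_*)^{-1}$. On $[0,x_0]$ we bound the integrand by $1$, which contributes at most $x_0$. For $x>x_0$ we set $\varepsilon=1/(t^2x)\le\varepsilon_*$, and the integrand is at most
\[
C_*^d\,(t^2x)^{-d\beta}.
\]
This is integrable on $[x_0,\infty)$ because $d\beta>1$. The resulting constant depends only on $(d,t,E)$ through $C_*$, $\varepsilon_*$ and $\beta$, and not on $\eta\in(0,\eta_0]$, which gives the uniform bound.

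There is no real obstacle once \Cref{l:poly-lower-tail} is available; the bootstrap there was the hard part. The one point to watch is the exponent: it must exceed $1$, which is why $d\ge10$ is needed. A single $Y$ with exponent $\beta<1$ would not suffice, so the argument relies on independence across the $d$ children, giving exponent $d\beta$.
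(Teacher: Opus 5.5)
Your proposal is correct and follows essentially the same route as the paper. Both reduce $\{B_\eta \text{ small}\}$ to all $d$ independent copies $Y_\eta^{(j)}$ being small, apply \Cref{l:poly-lower-tail} to get a lower tail of order $\varepsilon^{\beta d}$ with $\beta d \ge 310/301 > 1$ for $d \ge 10$, and conclude with the layer-cake integral split at $(t^2\varepsilon_*)^{-1}$.
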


\begin{proof}
Let $\beta=31/301$ and $\varepsilon_*>0$ be given by \Cref{l:poly-lower-tail}. If
$0<\varepsilon\le t^2\varepsilon_*$ and $B_\eta\le \varepsilon$, then each summand in \eqref{eq:B-def-rev} with $1\le j\le d$ satisfies
\[
Y_\eta^{(j)}\le \frac{\varepsilon}{t^2},
\]
since all summands are nonnegative. Therefore, by independence and \Cref{l:poly-lower-tail},
\[
\P(B_\eta\le \varepsilon)
\le \prod_{j=1}^{d}\P\left(Y_\eta^{(j)}\le \frac{\varepsilon}{t^2}\right)
\le C\varepsilon^{\beta d }.
\]

Set $s_0=\max\{1,(t^2\varepsilon_*)^{-1}\}$. 
For all $s\ge s_0$, we have $s^{-1}\le t^2\varepsilon_*$, hence
\[
\P(B_\eta<s^{-1})\le C s^{-\beta d}.
\]
Therefore,
\[
\E[B_\eta^{-1}]
\le 1+\int_1^\infty \P(B_\eta<s^{-1})\,ds
\le 1+\int_1^{s_0}1\,ds + C\int_{s_0}^\infty s^{-\beta d}\,ds.
\]
Since $\beta d>1$, the right-hand side is finite uniformly in $0<\eta\le \eta_0$.
This proves \eqref{eq:Binv-bound-rev}.
\end{proof}

We now return to \Cref{l:self-energy}.

\begin{proof}[Proof of \Cref{l:self-energy}]
Let $\eta\in(0,\eta_0]$. Conditioning on $(A_\eta,B_\eta)$ in \eqref{eq:ImR-root-rev},
\[
\E\left[\big(\Im R_{00}(E+\iu\eta)\bigr)^2 \mid A_\eta,B_\eta\right]
=\int_{\R}\rho(v)\frac{B_\eta^2}{((v-A_\eta)^2+B_\eta^2)^2}\,dv.
\]
Since $\rho$ is bounded,
\[
\E\left[\bigl(\Im R_{00}(E+\iu\eta)\bigr)^2 \mid A_\eta,B_\eta\right]
\le \|\rho\|_\infty\int_{\R}\frac{B_\eta^2}{(x^2+B_\eta^2)^2}\,dx
=\frac{\pi\|\rho\|_\infty}{2B_\eta}.
\]
Taking expectations and applying \Cref{l:root-self-energy},
\begin{equation}\label{eq:small-eta-second-moment-rev}
\sup_{0<\eta\le \eta_0}\E\left[\bigl(\Im R_{00}(E+\iu\eta)\bigr)^2\right]<\infty.
\end{equation}
For $\eta\in[\eta_0,1]$, the trivial resolvent bound $|R_{00}(E+\iu\eta)|\le \eta_0^{-1}$ gives
\[
\sup_{\eta\in[\eta_0,1]}\E\left[\bigl(\Im R_{00}(E+\iu\eta)\bigr)^2\right]\le \eta_0^{-2}<\infty.
\]
Together with \eqref{eq:small-eta-second-moment-rev}, this proves \Cref{l:self-energy}.
\end{proof}

\appendix

\section{Combes--Thomas Bound}\label[appendix]{app:CT-rigorous}

In the proof of  Combes--Thomas estimates, one typically conjugates the Hamiltonian $H$ 
by an exponential weight operator
\[
(W\psi)(x)=e^{\mu w_X(x)}\psi(x),\qquad w_X(x)=\dist(x,X),
\]
 then writes $H_\mu=W^{-1}HW$ and manipulates resolvents. On an infinite graph, $W$ is usually an unbounded
operator on $\ell^2(\mathcal V)$, so one may worry about domain issues. In this appendix, we address these concerns 
in a standard way. We first prove the desired Combes--Thomas estimate on finite volumes, then pass to the infinite graph by strong resolvent convergence and a finite-rank sandwiching argument.

Throughout this appendix, we  let $\mathcal G=(\mathcal V, \mathcal E)$ be a graph (finite or infinite) with maximum degree at most $d$,  and we retain the notation introduced in \Cref{s:diagonal_convergence}. Additionally, for vertices $v,w$ in a graph, we write $v \sim w$ if there is an edge between $v$ and $w$. 

\subsection{Finite-Volume Estimate}\label{app:CT-finite}

Fix any finite set $X\subset \mathcal V$ and  $r\in\N$, and define the (finite) neighborhood
\[
\Lambda_r=\{x\in\mathcal  V:\dist(x,X)\le r\}.
\]
If $\mathcal G$ has degree bounded by $d$ and $X$ is finite, then $\Lambda_r$ is finite for every $r$.
Define the  restriction $H_{\Lambda_r}$ as the operator on $\ell^2(\Lambda_r)$ obtained by removing all
edges between $\Lambda_r$ and $\Lambda_r^c$, and write 
\[
H_{\Lambda_r}=-tA_{\Lambda_r}+V_{\Lambda_r}.
\]

\begin{lemma}\label[lemma]{lem:CT-finite}
Fix $z=E+\iu\eta\in \bbH$ and  $t>0$. Recall $\mu_{\mathrm{CT}}$ from \Cref{lem:CT}. 
Then for any finite sets $X,Y\subset \Lambda_r$,
\[
\big\|\chi_X\,(H_{\Lambda_r}-z)^{-1}\,\chi_Y\big\|
\le \frac{2}{\eta}\,e^{-\mu_{\mathrm{CT}} \dist(X,Y)}.
\]
\end{lemma}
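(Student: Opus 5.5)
The plan is the standard Combes--Thomas conjugation argument on the finite set $\Lambda_r$, where every operator is a finite matrix and no domain issues arise. Fix $X,Y$ and set $\mu=\mu_{\mathrm{CT}}$ and $w(x)=\dist(x,X)$ for $x\in\Lambda_r$, with distance measured in $\mathcal G$. I will then use the distance $\dist_{\Lambda_r}$ of the induced subgraph, which dominates $\dist$. For the weight I take $\dist_{\Lambda_r}(x,X)$, which is $1$-Lipschitz along edges of $\Lambda_r$; this is all that is needed. Let $W$ be the diagonal matrix with entries $e^{\mu w(x)}$; it is invertible because $\Lambda_r$ is finite. Set $H_\mu=W^{-1}H_{\Lambda_r}W$. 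The potential commutes with $W$, so
\[
H_\mu-H_{\Lambda_r}=-t\,K,\qquad K_{xy}=A_{xy}\big(e^{\mu(w(y)-w(x))}-1\big).
\]
Since $|w(x)-w(y)|\le 1$ for $x\sim y$, each nonzero entry of $K$ has modulus at most $e^{\mu}-1=\eta/(4td)$. Each row and column of $K$ has at most $d$ nonzero entries, so the Schur test gives $\|tK\|\le \eta/4$.

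Next I will show that $H_\mu-z$ is invertible with $\|(H_\mu-z)^{-1}\|\le 2/\eta$. For any unit vector $\psi$, self-adjointness of $H_{\Lambda_r}$ gives $|\langle\psi,(H_{\Lambda_r}-z)\psi\rangle|\ge \eta$. Hence
\[
\|(H_\mu-z)\psi\|\ge |\langle\psi,(H_\mu-z)\psi\rangle|\ge \eta-\|tK\|\ge \tfrac{3\eta}{4},
\]
which gives the bound $4/(3\eta)\le 2/\eta$. Alternatively one can use the Neumann series $(H_\mu-z)^{-1}=(H_{\Lambda_r}-z)^{-1}\big(1-tK(H_{\Lambda_r}-z)^{-1}\big)^{-1}$, which gives $(1/\eta)\cdot(4/3)$.

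Finally, $(H_{\Lambda_r}-z)^{-1}=W(H_\mu-z)^{-1}W^{-1}$. The projections $\chi_X$ and $\chi_Y$ commute with $W$. On $X$ we have $w=0$, so $\chi_X W=\chi_X$, and on $Y$ we have $w\ge \dist(X,Y)$, so $\|W^{-1}\chi_Y\|\le e^{-\mu\dist(X,Y)}$. Combining these,
\[
\|\chi_X(H_{\Lambda_r}-z)^{-1}\chi_Y\|\le \|(H_\mu-z)^{-1}\|\,e^{-\mu\dist(X,Y)}\le \tfrac{2}{\eta}e^{-\mu\dist(X,Y)}.
\]
If the weight was taken with the induced distance, the exponent is $\dist_{\Lambda_r}(X,Y)\ge\dist(X,Y)$, so the claim follows.

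The only delicate point is the choice of weight. One needs a function that is $1$-Lipschitz on the edges actually present in $H_{\Lambda_r}$, vanishes on $X$, and is at least $\dist(X,Y)$ on $Y$. Taking $w(x)=\dist_{\mathcal G}(x,X)$ directly also works, since it is $1$-Lipschitz across any edge of $\mathcal G$ and hence across any edge of the subgraph. The remaining steps are routine.
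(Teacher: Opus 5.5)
Your proposal is correct and follows the paper's proof essentially step for step. The shared steps are: conjugation by the weight $e^{\mu\,\dist(\cdot,X)}$, the Schur-test bound $\|H_\mu-H_{\Lambda_r}\|\le td(e^{\mu}-1)=\eta/4$, the resulting bound $\|(H_\mu-z)^{-1}\|\le 4/(3\eta)\le 2/\eta$, and the sandwich $\chi_X W=\chi_X$, $\|W^{-1}\chi_Y\|\le e^{-\mu\dist(X,Y)}$. Your numerical-range argument for invertibility is a harmless variant of the paper's Neumann-series step, which you also give, and switching to the induced distance is unnecessary but harmless, since it only strengthens the exponent.
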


\begin{proof}
We abbreviate $\mu = \mu_{\mathrm{CT}}$. 
Since $\Lambda_r$ is finite, $H_{\Lambda_r}$ is a finite Hermitian matrix and all algebraic manipulations below are permissible. Fix a nonempty set $X\subset \Lambda_r$ and set $w_X(x)=\dist(x,X)$ for $x\in \Lambda_r$. Define
the bounded weight operator $W_r$ on $\ell^2(\Lambda_r)$ by
\[
(W_r\psi)(x)=e^{\mu w_X(x)}\psi(x),\qquad x\in\Lambda_r,
\]
and set
\[
H_{\Lambda_r,\mu}=W_r^{-1}H_{\Lambda_r}W_r=-tW_r^{-1}A_{\Lambda_r}W_r+V_{\Lambda_r}.
\]
In the last equality, we used that since $V_{\Lambda_r}$ is diagonal, it commutes with $W_r$. 

For neighbors $x\sim y$ in $\Lambda_r$ we have
$|w_X(x)-w_X(y)|\le 1$, so
\[
\big|(W_r^{-1}A_{\Lambda_r}W_r)_{xy}\big|=e^{\mu(w_X(y)-w_X(x))}\le e^\mu.
\]
Hence $J_r=W_r^{-1}A_{\Lambda_r}W_r-A_{\Lambda_r}$ is supported on edges and satisfies
$|(J_r)_{xy}|\le e^\mu-1$ for $x\sim y$. Analogously to \Cref{lem:adj-norm},
\[
\|J_r\|\le d(e^\mu-1).
\]
Therefore
\[
\|H_{\Lambda_r,\mu}-H_{\Lambda_r}\|\le td(e^\mu-1).
\]
By the definition of $\mu$, we have $td(e^\mu-1)=\eta/4$. Also,
$\|(H_{\Lambda_r}-z)^{-1}\|\le 1/\eta$. Thus
\[
\|(H_{\Lambda_r,\mu}-H_{\Lambda_r})(H_{\Lambda_r}-z)^{-1}\|\le \frac14,
\]
so $H_{\Lambda_r,\mu}-z$ is invertible, and
\[
\|(H_{\Lambda_r,\mu}-z)^{-1}\|
\le \frac{1}{1-1/4}\,\|(H_{\Lambda_r}-z)^{-1}\|
\le \frac{2}{\eta}.
\]
Now $\chi_XW_r=\chi_X$ and, since $w_X(y)\ge \dist(X,Y)$ for all $y\in Y$,
\[
\|W_r^{-1}\chi_Y\|\le e^{-\mu\dist(X,Y)}.
\]
Hence
\[
\|\chi_X(H_{\Lambda_r}-z)^{-1}\chi_Y\|
=\|\chi_XW_r(H_{\Lambda_r,\mu}-z)^{-1}W_r^{-1}\chi_Y\|
\le \frac{2}{\eta}e^{-\mu\dist(X,Y)}.
\]
This proves the claim.
\end{proof}

\subsection{Passage to Infinite Volume}\label{app:CT-infinite}

We define the decoupled operator 
\[
\widehat H_r=H_{\Lambda_r}\oplus H_{\Lambda_r^c},
\]
on $\ell^2(\mathcal V)$  by cutting all edges between $\Lambda_r$ and $\Lambda_r^c$ in the underlying graph and removing the corresponding entries from $H$ (so that $\widehat H_r$ is self-adjoint).
If $X,Y\subset \Lambda_r$, then 
\begin{equation}\label{eq:sandwich-identity}
\chi_X(\widehat H_r-z)^{-1}\chi_Y=\chi_X(H_{\Lambda_r}-z)^{-1}\chi_Y .
\end{equation}

\begin{lemma}\label[lemma]{lem:sr-conv}
For every $z \in \bbH$,
\[
\lim_{r\rightarrow \infty} (\widehat H_r-z)^{-1} \phi = (H-z)^{-1}\phi
\]
in norm for all $\phi \in \ell^2(\mathcal V)$.
\end{lemma}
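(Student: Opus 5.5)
The plan is to prove strong resolvent convergence by showing convergence on a dense set, using the fact that all resolvents are uniformly bounded. First I will use that $\|(\widehat H_r-z)^{-1}\|\le \eta^{-1}$ and $\|(H-z)^{-1}\|\le \eta^{-1}$ uniformly in $r$; both operators are self-adjoint on the domain of $V$, since $\widehat H_r$ and $H$ differ from $V$ by bounded operators of norm at most $td$ (\Cref{lem:adj-norm}). By a standard $\varepsilon/3$ argument, it therefore suffices to prove the convergence for $\phi$ in a dense subset of $\ell^2(\mathcal V)$.

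For the dense set I will take vectors of the form $\phi=(H-z)\psi$ with $\psi$ finitely supported. Finitely supported vectors lie in the domain of $V$ and form a core for $V$, hence a core for $H=-tA+V$, because $A$ is bounded. Consequently $(H-z)$ applied to finitely supported vectors has dense range; this uses the fact that $(H-z)$ maps a core onto a dense subset of $\ell^2$, which follows since $z\notin\R$. For such $\phi$ the second resolvent identity gives
\[
(\widehat H_r-z)^{-1}\phi-(H-z)^{-1}\phi=(\widehat H_r-z)^{-1}(H-\widehat H_r)\psi .
\]
Here $H-\widehat H_r=-tK_r$, where $K_r$ is the adjacency operator of the cut edges between $\Lambda_r$ and $\Lambda_r^c$. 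Since $X$ is finite and nonempty, the sets $\Lambda_r$ exhaust the connected component of $X$. Every vertex $x$ in the finite support of $\psi$ either lies outside that component, in which case no cut edge touches it, or lies in $\Lambda_{r-1}$ for all large $r$, in which case no cut edge touches it either. Hence $K_r\psi=0$ for all sufficiently large $r$, and the difference vanishes identically. I should double-check this exhaustion claim. If the graph is disconnected, vertices outside the component of $X$ are never incident to cut edges, so the claim still holds.

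Combining the two steps, for general $\phi$ I will choose $\phi'$ in the dense set with $\|\phi-\phi'\|\le\varepsilon\eta$. Then
\[
\|(\widehat H_r-z)^{-1}\phi-(H-z)^{-1}\phi\|\le 2\varepsilon+\|(\widehat H_r-z)^{-1}\phi'-(H-z)^{-1}\phi'\|,
\]
and the last term is eventually zero. I expect the only delicate point to be justifying that $(H-z)\,\ell^2_c(\mathcal V)$ is dense, where $\ell^2_c(\mathcal V)$ denotes the finitely supported vectors. I would argue it directly. If $\chi\perp (H-z)\delta_x$ for all $x$, then $(H-\bar z)\chi=0$ holds in the weak sense, coordinatewise. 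This means $V\chi=t A\chi+\bar z\chi\in\ell^2$, so $\chi\in D(V)=D(H)$ and $(H-\bar z)\chi=0$, which forces $\chi=0$ by self-adjointness.
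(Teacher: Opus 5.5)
Your proof is correct and follows the paper's route. The paper likewise uses that finitely supported vectors form a common core on which $\widehat H_r\psi=H\psi$ for all large $r$, by the same exhaustion of the components meeting $X$ (which is how your ``component of $X$'' should be read when $X$ meets several components), and then cites the core criterion for strong resolvent convergence (Reed--Simon, Theorem VIII.25), whereas you prove that criterion inline via density of $(H-z)$ applied to finitely supported vectors, the second resolvent identity, and an $\varepsilon/3$ argument, which makes your version self-contained.
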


\begin{proof}
Let $C_c(\mathcal V)$ be the finitely supported functions on $\mathcal V$. Since $A$ is bounded and $V$ is a self-adjoint
multiplication operator, $C_c(\mathcal V)$ is a core for $H$ and every $\widehat H_r$.
For any $\psi\in C_c(\mathcal V)$, let $\mathcal C_X\subset \mathcal V$ be the union of the connected components of $\mathcal G$ that intersect $X$. Since $\operatorname{supp}\psi\cap \mathcal C_X$ is finite and every vertex in $\mathcal C_X$ has finite distance to $X$, there exists $R$ such that
\[
\operatorname{supp}\psi\cap \mathcal C_X\subset \Lambda_{R-1}.
\]
Hence for all $r\ge R$,
\[
\operatorname{supp}\psi\subset \Lambda_{r-1}\cup \mathcal C_X^c.
\]
Now every edge removed in passing from $H$ to $\widehat H_r$ has one endpoint in $\Lambda_r$ and the other in $\Lambda_r^c$, so both endpoints lie in $\mathcal C_X$; moreover, the endpoint in $\Lambda_r$ actually lies in $\Lambda_r\setminus \Lambda_{r-1}$. Therefore no removed edge is incident to a vertex where $\psi$ is nonzero: on $\mathcal C_X$ the support of $\psi$ lies inside $\Lambda_{r-1}$, while on $\mathcal C_X^c$ the operators $\widehat H_r$ and $H$ already agree. Hence for all $r\ge R$,
\[
\widehat H_r\psi=H\psi,
\]
and therefore $\widehat H_r\psi\to H\psi$ as $r\rightarrow \infty$ for every $\psi\in C_c(\mathcal V)$.
By the core criterion for strong resolvent convergence (e.g., \cite[Theorem VIII.25]{reedsimon1972mmp1}), this implies
$\widehat H_r\to H$ in the strong resolvent sense, which is exactly the desired conclusion.
\end{proof}

\begin{lemma}\label[lemma]{lem:finite-rank}
Let $(T_r)_{r=1}^\infty$ and $T$ be bounded operators on a Hilbert space such that $\sup_r\|T_r\|<\infty$, and suppose $T_r\to T$ in the strong operator topology as $r\rightarrow \infty$.
If $P$ and $Q$ are finite-rank orthogonal projections, then
\[
\lim_{r\rightarrow \infty} 
\|PT_rQ-PTQ\| = 0.
\]
\end{lemma}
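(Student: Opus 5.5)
The plan is to exploit the finite-dimensionality of the ranges of $P$ and $Q$: on a finite-dimensional space, strong convergence and norm convergence coincide. Write $\Delta_r = T_r - T$. By hypothesis $\Delta_r \to 0$ strongly, and $\sup_r \|\Delta_r\| \le \sup_r\|T_r\| + \|T\| < \infty$, although we will only need strong convergence for the main step. The goal is $\|P\Delta_r Q\| \to 0$. Since $\|P\| \le 1$, we have $\|P\Delta_r Q\| \le \|\Delta_r Q\|$, so it suffices to show $\|\Delta_r Q\| \to 0$.

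First, fix an orthonormal basis $e_1,\dots,e_n$ of $\operatorname{ran} Q$; this is finite because $Q$ has finite rank. Any $\phi$ with $\|\phi\|\le 1$ satisfies $Q\phi = \sum_{i=1}^n \langle e_i,\phi\rangle e_i$, and each coefficient obeys $|\langle e_i,\phi\rangle|\le 1$. Hence
\[
\|\Delta_r Q\phi\| \le \sum_{i=1}^n \|\Delta_r e_i\|,
\]
which gives $\|\Delta_r Q\| \le \sum_{i=1}^n \|\Delta_r e_i\|$. Each of the finitely many terms tends to $0$ by strong convergence applied to the fixed vector $e_i$. Therefore the sum tends to $0$, and we conclude $\|PT_rQ - PTQ\| \to 0$.

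No step here is a real obstacle. The only point needing care is that the bound on $\|\Delta_r Q\|$ must be uniform over the unit ball. Reducing to finitely many basis vectors takes care of this, and it is exactly where finite rank is used; rank $n$ simply becomes a fixed constant. Finite rank of $P$ is not even needed. The uniform bound $\sup_r\|T_r\|<\infty$ is likewise unnecessary for this argument, though it is harmless. In the application in \Cref{app:CT-infinite}, one takes $T_r=(\widehat H_r-z)^{-1}$ and $T=(H-z)^{-1}$, using \Cref{lem:sr-conv}, together with $P=\chi_X$ and $Q=\chi_Y$ for finite $X,Y$.
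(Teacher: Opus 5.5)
Your proof is correct. It uses the same core idea as the paper: finite-dimensionality of $\operatorname{Ran}(Q)$ turns strong convergence into norm convergence. The mechanism for that upgrade is different, though.

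The paper sets $S_r=P(T_r-T)Q$ and notes that $S_r v\to 0$ for each $v\in F=\operatorname{Ran}(Q)$. It then uses compactness of the unit sphere of $F$ together with $\sup_r\|S_r\|<\infty$ (an equicontinuity argument) to pass from pointwise to uniform convergence.

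You instead expand in an orthonormal basis $e_1,\dots,e_n$ of $\operatorname{Ran}(Q)$ and obtain the explicit estimate
\[
\|P(T_r-T)Q\|\le\sum_{i=1}^n\|(T_r-T)e_i\|.
\]
This avoids compactness, is quantitative, and, as you observe, uses neither the finite rank of $P$ nor the hypothesis $\sup_r\|T_r\|<\infty$. That hypothesis is automatic anyway for strongly convergent sequences, by the uniform boundedness principle.

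What the paper's soft argument buys is that it adapts almost verbatim to compact $Q$. There the closure of the image of the unit ball is still compact, but no finite basis expansion is available, and the uniform bound is genuinely used. For the finite-rank case actually needed in \Cref{app:CT-infinite}, your argument is the more elementary and sharper of the two.
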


\begin{proof}
Let $F=\mathrm{Ran}(Q)$; then $F$ is finite-dimensional. Define $S_r=P(T_r-T)Q$, which maps $F$ into the
finite-dimensional space $\mathrm{Ran}(P)$. Strong convergence implies that for each $v\in F$,
$\|S_r v\|\to 0$. Since $F\cap\{v:\|v\|=1\}$ is compact and $\sup_r\|S_r\|<\infty$, pointwise convergence on the
unit sphere implies uniform convergence,
\[
\sup_{v\in F,\ \|v\|=1}\|S_r v\|\to 0,
\]
which is exactly $\|S_r\|\to 0$.
\end{proof}

We can now prove \Cref{lem:CT}.
\begin{proof}[Proof of \Cref{lem:CT}]
We abbreviate $\mu = \mu_{\mathrm{CT}}$.  
First, we prove the claim when both $X$ and $Y$ are finite.
Fix finite $X,Y\subset \mathcal V$ and choose $r$ large enough so that $X\cup Y_{\mathrm{conn}}\subset \Lambda_r$, where $Y_{\mathrm{conn}}$ is the intersection of $Y$ with the union of the connected components of $\mathcal G$ meeting $X$. 
By \Cref{lem:CT-finite} and the identity \eqref{eq:sandwich-identity},
\begin{equation}\label{e:ct_1}
\big\|\chi_X(\widehat H_r-z)^{-1}\chi_Y\big\|
= \big\|\chi_X(H_{\Lambda_r}-z)^{-1}\chi_{Y_{\mathrm{conn}}}\big\|
\le \frac{2}{\eta}\,e^{-\mu\dist(X,Y)}.
\end{equation}
By \Cref{lem:sr-conv}, $(\widehat H_r-z)^{-1}\to (H-z)^{-1}$ in the strong operator topology. Further, we have the elementary bound 
$\sup_r\|(\widehat H_r-z)^{-1}\|\le 1/\eta$. This allows us to apply \Cref{lem:finite-rank} with
$T_r=(\widehat H_r-z)^{-1}$, $T=(H-z)^{-1}$, $P=\chi_X$, and $Q=\chi_Y$ 
to obtain
\[
\big\|\chi_X(\widehat H_r-z)^{-1}\chi_Y - \chi_X(H-z)^{-1}\chi_Y\big\|\to 0.
\]
Together with \eqref{e:ct_1}, this proves the claim when $X$ and $Y$ are finite. 

For the general case, pick increasing sequences of finite sets $(X_n)_{n=1}^\infty$ and $(Y_n)_{n=1}^\infty$ such that $\lim_{n\rightarrow \infty} X_n = X$ and $\lim_{n\rightarrow \infty} Y_n = Y$. Since $X_n\subset X$ and $Y_n\subset Y$, we have
\[
\dist(X_n,Y_n)\ge \dist(X,Y).
\]
Therefore, by the finite-set Combes--Thomas estimate,
\begin{equation}\label{e:uniform}
\|\chi_{X_n}(H-z)^{-1}\chi_{Y_n}\|
\le \frac{2}{\eta}\,e^{-\mu\dist(X_n,Y_n)}
\le \frac{2}{\eta}\,e^{-\mu\dist(X,Y)}
\qquad\text{for all }n.
\end{equation}
Next, for any $\psi\in \ell^2(\mathcal V)$,
\begin{align*}
\bigl(\chi_{X_n}(H-z)^{-1}\chi_{Y_n}-\chi_X(H-z)^{-1}\chi_Y\bigr)\psi
=&
(\chi_{X_n}-\chi_X)(H-z)^{-1}\chi_{Y_n}\psi\\
&+\chi_X(H-z)^{-1}(\chi_{Y_n}-\chi_Y)\psi.
\end{align*}
Since $(H-z)^{-1}$ is bounded and $\chi_{X_n}\to \chi_X$, $\chi_{Y_n}\to \chi_Y$ strongly,
both terms on the right-hand side converge to zero in norm.
In combination with \eqref{e:uniform}, this shows that for any $\psi \in \ell^2(\mathcal V)$,
\[
\|\chi_X(H-z)^{-1}\chi_Y\psi\|
=
\lim_{n\to\infty}\|\chi_{X_n}(H-z)^{-1}\chi_{Y_n}\psi\|
\le
\frac{2}{\eta}e^{-\mu\dist(X,Y)}\|\psi\|.
\]
Taking the supremum over all $\psi$ with $\|\psi\|=1$ yields
\[
\|\chi_X(H-z)^{-1}\chi_Y\|
 \le
\frac{2}{\eta} e^{-\mu\dist(X,Y)}.
\]
This proves the claim.
\end{proof}

\section{Lower Bound on the Density of States}\label[appendix]{app:DOS-bounds}

This appendix is devoted to the proof of \Cref{l:dos_lower}. 

For each neighbor $y\sim 0$, let $\bbT_y$ denote the connected component of
$\bbT\setminus\{0\}$ containing $y$, and let $H_{\bbT_y}$ be the restriction
of $H_{d,t}$ to $\ell^2(\bbT_y)$. For $z=E+\iu \eta$ with $\eta>0$, define
\[
\Gamma_y(z)=\langle \delta_y,(H_{\bbT_y}-z)^{-1}\delta_y\rangle.
\]
By the Schur complement formula,
\[
R_{00}(z)=\langle \delta_0,(H_{d,t}-z)^{-1}\delta_0\rangle
=\frac{1}{V_0-z-t^2\sum_{y\sim0}\Gamma_y(z)}.
\]

\begin{lemma}\label[lemma]{lem:forward-resolvent-small}
There exists a constant $p_*>0$ such that for all $E\in\R$, all $\eta>0$, and
all neighbors $y\sim 0$,
\[
\P\big(|\Gamma_y(E+\iu \eta)|\le 1\big)\ge p_*.
\]
\end{lemma}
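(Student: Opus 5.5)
We need $\P(|\Gamma_y(z)|\le 1)\ge p_*$ uniformly in $E\in\R$ and $\eta>0$. The constant may depend on $d$ and $t$.

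We work with the Schur complement at $y$ inside $\bbT_y$. The vertex $y$ has $d-1$ children $w$ in $\bbT_y$. Each child subtree is isomorphic to $\bbT_y$, so its forward resolvent $\Gamma_w$ is distributed as $\Gamma_y$. This gives
\[
\Gamma_y(z)=\frac{1}{V_y-E-\iu\eta-t^2\sum_{w}\Gamma_w(z)},
\]
where $V_y$ is independent of $(\Gamma_w)_w$. Write $T=E+t^2\sum_w\Re\Gamma_w$ and $\Sigma=\eta+t^2\sum_w\Im\Gamma_w\ge \eta>0$. Then
\[
|\Gamma_y|=\frac{1}{\sqrt{(V_y-T)^2+\Sigma^2}}\le \frac{1}{|V_y-T|},
\]
so $|V_y-T|\ge 1$ already forces $|\Gamma_y|\le 1$. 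This ignores the helpful imaginary part and avoids any case split on the size of $\eta$. Because $V_y\sim N(0,1)$ is independent of $T$, we condition on $T$:
\[
\P\big(|\Gamma_y|\le 1\mid T\big)\ge \P\big(|V_y-T|\ge 1\mid T\big)=1-\int_{T-1}^{T+1}\rho(v)\,dv\ge 1-2\|\rho\|_\infty=1-\frac{2}{\sqrt{2\pi}}.
\]
Taking expectations gives the claim with $p_*=1-2(2\pi)^{-1/2}\approx 0.2$. This constant is independent of $E$, $\eta$, $d$ and $t$.

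The only step needing care is justifying the Schur complement identity on the infinite subtree with unbounded potential, together with the independence of $V_y$ from the child resolvents. The identity holds by the same reasoning as the Schur complement formula for $R_{00}$ displayed just above, applied to $H_{\bbT_y}$. The child resolvents $\Gamma_w$ are functions only of the potentials on the child subtrees, which do not contain $y$. If one prefers to avoid operator-theoretic subtleties, one can prove the bound for finite truncations of $\bbT_y$, where everything is matrix algebra. One then passes to the limit using strong resolvent convergence (\Cref{lem:sr-conv}) and the finite-rank sandwich argument (\Cref{lem:finite-rank}): $\Gamma_y$ is the limit of the truncated resolvents almost surely, and the closed event $\{|\cdot|\le 1\}$ is preserved under limits by the portmanteau theorem. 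I expect no real obstacle beyond this bookkeeping.
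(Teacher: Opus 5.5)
Your proof is correct and follows essentially the same route as the paper: the Schur complement at $y$, discarding the imaginary part so that $|V_y-T|\ge 1$ suffices, and the independence of $V_y$ from the rest. The only difference is the last step. The paper uses that $[-1,1]$ carries the largest Gaussian mass among intervals of length $2$, which gives $p_*=\P(|V_y|\ge 1)\approx 0.317$; your bound $1-2\|\rho\|_\infty=1-\sqrt{2/\pi}\approx 0.20$ is slightly weaker but equally valid.
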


\begin{proof}
Fix $z=E+\iu\eta$ and $y\sim0$. Conditioning on the potentials in
$\bbT_y\setminus\{y\}$, the Schur complement formula gives
\[
\Gamma_y(z)=\frac{1}{V_y-a-\iu b}
\]
for some random variables $a\in\R$ and $b>0$ that are independent of $V_y$.
Hence
\[
\P\big(|\Gamma_y(z)|\le 1 \,\big|\, a,b\big)
=\P\big((V_y-a)^2+b^2\ge 1 \,\big|\, a,b\big)
\ge \P\big(|V_y-a|\ge 1\big).
\]
Since the density of $V_0$ is even and strictly decreasing on $[0,\infty)$, among all
intervals of length $2$ the interval $[-1,1]$ has the largest Gaussian mass.
Therefore
\[
\P(|V_y-a|\ge 1)\ge \P(|V_y|\ge 1)>0
\]
for all $a\in\R$. Setting $p_* = \P(|V_y|\ge 1)$ and averaging over $a,b$ proves the claim.
\end{proof}

\begin{proposition}\label[proposition]{prop:local-dos-lower}
Fix $d \ge 3$, $t>0$, and $D>0$.  Then there exists a
constant $c_0(d,t,D)>0$ such that for all $E\in[-D,D]$ and all
$\eta\in(0,1]$,
\[
\frac{1}{\pi}\Im m_\bbT(E+\iu\eta)\ge c_0.
\]
\end{proposition}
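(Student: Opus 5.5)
Since $m_\bbT(z)=\E[R_{00}(z)]$, we have $\Im m_\bbT(E+\iu\eta)=\E[\Im R_{00}(E+\iu\eta)]$. The plan is to bound this expectation from below using the Schur complement formula stated just above together with \Cref{lem:forward-resolvent-small}. Write
\[
R_{00}(z)=\frac{1}{V_0-a-\iu b},\qquad a=E+t^2\sum_{y\sim 0}\Re\Gamma_y(z),\qquad b=\eta+t^2\sum_{y\sim0}\Im\Gamma_y(z),
\]
where $(a,b)$ is independent of $V_0$ and $b>0$. Then $\Im R_{00}=b/((V_0-a)^2+b^2)$.

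We work on the event $\mathcal A=\{|\Gamma_y(z)|\le 1 \text{ for all } y\sim 0\}$. The subtrees $\bbT_y$ are disjoint, so the $\Gamma_y$ are independent, and \Cref{lem:forward-resolvent-small} gives $\P(\mathcal A)\ge p_*^d$. On $\mathcal A$ we have $|a|\le D+t^2 d$ and $0<b\le \eta+t^2d\le 1+t^2d$, since $E\in[-D,D]$ and $\eta\le 1$. The one remaining issue is that $b$ might be very small.

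To handle this, we condition on $(a,b)$ and integrate over $V_0$:
\[
\E[\Im R_{00}\mid a,b]=\int\rho(v)\frac{b}{(v-a)^2+b^2}\,dv\ge \min_{|v-a|\le K}\rho(v)\int_{|x|\le K}\frac{b}{x^2+b^2}\,dx = \min_{|v-a|\le K}\rho(v)\cdot 2\arctan(K/b).
\]
Take $K=1$. Then $\min_{|v-a|\le 1}\rho(v)\ge\rho(D+t^2d+1)$, and $\arctan(1/b)\ge\arctan\bigl(1/(1+t^2d)\bigr)$. The lower bound is therefore uniform, and small $b$ only helps, because the Poisson kernel concentrates near $a$, where $\rho$ is bounded below. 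So on $\mathcal A$,
\[
\E[\Im R_{00}\mid a,b]\ge 2\rho(D+t^2d+1)\arctan\bigl((1+t^2d)^{-1}\bigr).
\]

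Since $\Im R_{00}\ge0$, taking expectations over $\mathcal A$ gives
\[
\Im m_\bbT\ge p_*^d\cdot 2\rho(D+t^2d+1)\arctan\bigl((1+t^2d)^{-1}\bigr)=:\pi c_0>0 .
\]
The main point to check is that $\E[R_{00}]$ is indeed the Stieltjes transform of $\nu$ (a Fubini argument, justified by $|R_{00}|\le\eta^{-1}$) and that the independence of $V_0$ from the $\Gamma_y$ holds; both are immediate from the construction. I do not expect a real obstacle: the only potential danger, degenerate $b$, is harmless for the reason given above.
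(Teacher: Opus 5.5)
Your proof is correct and follows the paper's argument. It uses the same event $\bigcap_{y\sim 0}\{|\Gamma_y(z)|\le 1\}$ with probability at least $p_*^d$, the same bounds $|a|\le D+t^2d$ and $0<b\le 1+t^2d$, and the same integration over $V_0$ conditionally on the off-root potentials. The only difference is the final step: the paper gets a positive infimum of the Poisson integral $F(a,b)$ from its continuous extension to $b=0$ plus compactness, while you give the explicit bound $2\rho(D+t^2d+1)\arctan\bigl((1+t^2d)^{-1}\bigr)$ by restricting to $|v-a|\le 1$, which is more elementary and yields an explicit constant.
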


\begin{proof}
Fix $E\in[-D,D]$ and $\eta\in(0,1]$, and set $z=E+\iu\eta$. Define the event
\[
\mathcal G_z=\bigcap_{y\sim0}\big\{|\Gamma_y(z)|\le 1\big\}.
\]
Since the forward subtrees $(\bbT_y)_{y\sim0}$ are disjoint and the potentials
are i.i.d., the random variables $(\Gamma_y(z))_{y\sim0}$ are independent.
Hence, by \Cref{lem:forward-resolvent-small},
\[
\P(\mathcal G_z)\ge p_*^d.
\]

Conditioning on the sigma-field generated by $(V_x)_{x\neq 0}$, define
\[
A_z=E+t^2\sum_{y\sim0}\Re \Gamma_y(z),
\qquad
B_z=\eta+t^2\sum_{y\sim0}\Im \Gamma_y(z).
\]
Then $B_z>0$ and
\[
R_{00}(z)=\frac{1}{V_0-A_z-\iu B_z}.
\]
Therefore,
\[
\frac{1}{\pi}\Im m_{\bbT}(z)
=\frac{1}{\pi}\E\big[\Im R_{00}(z)\big]
=\E\!\left[\int_{\R}\rho(v)\,\frac{1}{\pi}\frac{B_z}{(v-A_z)^2+B_z^2}\,dv\right],
\]
where $\rho$ is the density of $V_0$. 
On the event $\mathcal G_z$ we have
\[
|A_z|
\le D+t^2d,
\qquad
0<B_z\le 1+t^2d.
\]
Set
\[
K_1=D+t^2 d ,
\qquad
K_2=1+t^2 d.
\]
Define, for $a\in\R$ and $b>0$,
\[
F(a,b)=\int_{\R}\rho(v)\,\frac{1}{\pi}\frac{b}{(v-a)^2+b^2}\,dv.
\]
Because $\rho$ is continuous and strictly positive, $F$ extends continuously to
$\R\times[0,\infty)$ by setting $F(a,0)=\rho(a)$ (see, e.g., \cite[Theorem 7.4]{axler2013harmonic}). Hence
\[
\inf_{|a|\le K_1,\; 0\le b\le K_2} F(a,b)>0.
\]
Denote this infimum by $c_1$. 
On $\mathcal G_z$ we have $|A_z|\le K_1$ and $0<B_z\le K_2$, so
\[
F(A_z,B_z)\ge c_1.
\]
Consequently,
\[
\frac{1}{\pi}\Im m_{\bbT}(z)
=\E\big[F(A_z,B_z)\big]
\ge c_1\,\P(\mathcal G_z)
\ge c_1 p_*^d.
\]
Thus the claim holds with $c_0=c_1p_*^d$.
\end{proof}

We are now ready for the proof of \Cref{l:dos_lower}.

\begin{proof}[Proof of \Cref{l:dos_lower}]
By \Cref{l:wegner}, the measure $\nu$ is absolutely continuous with
respect to Lebesgue measure, so there exists $n\in L^\infty(\R)$ such that
\[
\nu(d\lambda)=n(\lambda)\,d\lambda.
\]
For $\eta>0$, let
\[
P_\eta(x)=\frac{1}{\pi}\frac{\eta}{x^2+\eta^2}
\]
denote the Poisson kernel. Since $m_{\bbT}$ is the Stieltjes transform of $\nu$, we
have
\[
\frac{1}{\pi}\Im m_{\bbT}(E+\iu \eta)
=\int_{\R} P_\eta(E-\lambda)\,n(\lambda)\,d\lambda
=(P_\eta*n)(E).
\]
By \Cref{prop:local-dos-lower}, for every $E\in[-D,D]$ and every $\eta\in(0,1]$,
\[
(P_\eta*n)(E)\ge c_0.
\]
Now fix an interval $I\subset[-D,D]$. Integrating over $E\in I$ gives
\[
\int_I (P_\eta*n)(E)\,dE \ge c_0|I|
\qquad\text{for all }\eta\in(0,1].
\]
Since $(P_\eta)_{\eta>0}$ is an approximate identity and $n\in L^\infty(\R)$,
we have
\[
\lim_{\eta\downarrow0}\int_I (P_\eta*n)(E)\,dE
=\int_I n(E)\,dE
=\nu(I).
\]
Therefore, $\nu(I)\ge c_0|I|$, as desired.
\end{proof}

\printbibliography

\end{document}